\numberwithin{equation}{section}
\numberwithin{figure}{section}
\theoremstyle{plain}
\newtheorem{theorem}{Theorem}[section]
\theoremstyle{plain}
\newtheorem*{theorem*}{Theorem}
\theoremstyle{plain}
\newtheorem{proposition}[theorem]{Proposition}
\theoremstyle{plain}
\newtheorem{lemma}[theorem]{Lemma}
\theoremstyle{plain}
\newtheorem{corollary}[theorem]{Corollary}
\theoremstyle{definition}
\newtheorem{definition}[theorem]{Definition}
\theoremstyle{definition}
\theoremstyle{definition}
\theoremstyle{remark}
\newtheorem{remark}[theorem]{Remark}
\newcommand{\Uqg}{U_q(\mathfrak{g})}
\newcommand{\CqG}{\mathbb{C}_q[G]}
\newcommand{\id}{\mathrm{id}}
\newcommand{\bbC}{\mathbb{C}}
\newcommand{\wt}{\mathrm{wt}}
\newcommand{\diff}{\mathrm{d}}
\newcommand{\del}{\partial}
\newcommand{\delbar}{\bar{\partial}}
\newcommand{\EV}{\mathsf{E}}
\newcommand{\EVp}{\mathsf{E}^\prime}
\newcommand{\CV}{\mathsf{C}}
\newcommand{\CVp}{\mathsf{C}^\prime}
\newcommand{\braid}{\hat{\mathsf{R}}}
\newcommand{\met}{g}
\newcommand{\metMP}{g_{- +}}
\newcommand{\metPM}{g_{+ -}}
\newcommand{\calc}{\Omega}
\newcommand{\calcM}{\Omega_-}
\newcommand{\calcP}{\Omega_+}
\newcommand{\subalg}{\mathcal{B}}
\newcommand{\conn}{\nabla}
\newcommand{\opS}{\mathsf{S}}
\newcommand{\opSt}{\tilde{\mathsf{S}}}
\newcommand{\opT}{\mathsf{T}}
\newcommand{\lieg}{\mathfrak{g}}
\newcommand{\sigmaMM}{\sigma_{- -}}
\newcommand{\sigmaMP}{\sigma_{- +}}
\newcommand{\sigmaPM}{\sigma_{+ -}}
\newcommand{\sigmaPP}{\sigma_{+ +}}
\newcommand{\splitmap}{\boldsymbol{s}}
\newcommand{\curv}{R_\nabla}
\newcommand{\ricci}{\mathrm{Ricci}}
\newcommand{\ricciMP}{\mathrm{Ricci}_{- +}}
\newcommand{\ricciPM}{\mathrm{Ricci}_{+ -}}
\newcommand{\scal}{\mathrm{scal}}
\newcommand{\talg}{T_\subalg}
\newcommand{\imet}{(\cdot, \cdot)}
\newcommand{\qdim}{\mathrm{qdim}}
\newcommand{\Trq}{\mathrm{Tr}_q}
\newcommand{\cPM}{c_{+ -}}
\newcommand{\cMP}{c_{- +}}
\begin{document}

\title[Quantum Riemannian geometry of quantum projective spaces]{Quantum Riemannian geometry\\ of quantum projective spaces}

\author{Marco Matassa}

\address{OsloMet – Oslo Metropolitan University}

\email{marco.matassa@oslomet.no}

\begin{abstract}
We study the quantum Riemannian geometry of quantum projective spaces of any dimension.
In particular, we compute the Riemann and Ricci tensors using previously introduced quantum metrics and quantum Levi-Civita connections.
We show that the Riemann tensor is a bimodule map and derive various consequences of this fact.
We prove that the Ricci tensor is proportional to the quantum metric, giving a quantum analogue of the Einstein condition, and compute the corresponding scalar curvature.
\end{abstract}

\maketitle

\section*{Introduction}

\subsection*{Background}
Extending the ideas of classical geometry to quantum spaces is an active area of research, with many different approaches having been proposed over the years.
The underlying idea is that certain non-commutative algebras should be regarded as consisting of functions on (virtual) \emph{non-commutative spaces}, usually arising from classical spaces by some quantization procedure.
A rich source of examples is given by the theory of \emph{compact quantum groups} (see for instance \cite{klsc}), which allows to quantize any compact semisimple Lie group, as well as large classes of homogeneous spaces, among which we have the \emph{quantum projective spaces}.
The aim of this paper is to study the \emph{quantum Riemannian geometry} of these quantum homogeneous spaces, continuing the investigation initiated in \cite{paper-projective}.
The results obtained here generalize those obtained by Majid for the quantum 2-sphere in \cite{majid-sphere} (and later revisited with Beggs in \cite{bm11, bm18, quantum-book}).

First we should specify what we mean by quantum Riemannian geometry, since the framework one uses can depend strongly on the approach considered.
For instance, a popular approach to non-commutative geometry is the one introduced by Connes \cite{connes}, which studies a quantum space by representing its algebra on a Hilbert space and using a densely-defined operator to extract the geometrical information.
This leads to the concept of spectral triples, which for the quantum projective spaces have been widely studied, see for instance \cite{dirac-sphere, dirac-plane, dirac-projective}.
On the other hand, the approach we consider in this paper is more algebraic and a thorough discussion of it can be found in the book \cite{quantum-book}.
It has the advantage of being a more straightforward adaptation of the classical picture, which lets us easily introduce a quantum analogue of the metric tensor, study the compatibility of a connection with a metric, and define geometrical objects like the Ricci tensor.

We summarize the main ingredients of this approach, which we refer to as \emph{quantum Riemannian geometry}, following \cite{quantum-book}.
Given a non-commutative algebra $A$, a convenient starting point for a geometrical study is the introduction of a \emph{differential calculus} $\Omega^\bullet$ over $A$.
This choice is not unique in general, but for many examples there are canonical choices.
The differential calculus enters the picture mainly through its first and second-order parts, denoted by $\Omega^1$ and $\Omega^2$ respectively.
A \emph{quantum metric} can be defined as an element $g \in \Omega^1 \otimes_A \Omega^1$ admitting an appropriate \emph{inverse metric} $\imet: \Omega^1 \otimes_A \Omega^1 \to A$.
Next, a connection on $\Omega^1$ is a map $\nabla: \Omega^1 \to \Omega^1 \otimes_A \Omega^1$ satisfying an appropriate Leibniz rule.
This is a standard notion in the algebraic context, as well as that of torsion-free connection.
Furthermore, a notion of \emph{compatibility} between a connection $\nabla$ and a quantum metric $g$ can be formulated.
Requiring torsion-freeness and compatibility with the metric leads to the notion of \emph{quantum Levi-Civita connection}.
The existence of these objects is not guaranteed, at this level of generality.

Further geometrical aspects can be introduced, provided we have a quantum metric and a connection.
We define the \emph{Riemann tensor} $R_\nabla: \Omega^1 \to \Omega^2 \otimes_A \Omega^1$ as the curvature of the connection $\nabla$, which is a standard notion.
Next, the \emph{Ricci tensor} can be defined as a certain element $\ricci \in \Omega^1 \otimes_A \Omega^1$, which is constructed via an appropriate contraction of the Riemann tensor using the quantum metric and its inverse.
This definition also requires an auxiliary choice, namely a map $\splitmap: \Omega^2 \to \Omega^1 \otimes_A \Omega^1$ splitting the wedge product of $\Omega^\bullet$.
Also the \emph{scalar curvature} $\scal \in A$ can be defined as a further contraction of the Ricci tensor.
Finally, a quantum analogue of the \emph{Einstein condition} can be formulated naturally, as the requirement that the Ricci tensor should be proportional to the quantum metric $g$.

\subsection*{Quantum projective spaces}
We consider the framework discussed above in the case of quantum projective spaces.
Starting from the quantum special linear group $\bbC_q[SL_{r + 1}]$, we can define the algebra corresponding to the quantum projective space of complex dimension $r$, which we denote by $\subalg$.
This algebra admits a \emph{canonical} differential calculus $\calc^\bullet$, introduced by Heckenberger-Kolb in \cite{locally-finite, heko} (more generally for any \emph{quantum irreducible flag manifold}).
We denote its first-order part simply by $\Omega$.
This opens the door to the study of the quantum Riemannian geometry of $\subalg$.
The case $r = 1$, also known as the \emph{Podleś sphere}, was originally studied by Majid in \cite{majid-sphere} (and later expanded, as mentioned above).

Our aim in this paper is to investigate the geometry of general quantum projective spaces, that is for any value of $r$.
This study was initiated in \cite{paper-projective}, where the main results were the introduction of a particular quantum metric $\met \in \calc \otimes_\subalg \calc$ and a quantum Levi-Civita connection $\conn: \calc \to \calc \otimes_\subalg \calc$.
In the classical limit, the metric $\met$ reduces to the Fubini-Study metric and the connection $\conn$ reduces to the corresponding Levi-Civita connection (acting on the cotangent bundle).
These are the natural ingredients to use for the study of further geometrical aspects of quantum projective spaces, like the Riemann and Ricci tensors.

\subsection*{Results}
Let us now summarize the results of this paper.
We start our investigation by studying in some detail the \emph{generalized braiding} $\sigma: \calc \otimes_\subalg \calc \to \calc \otimes_\subalg \calc$ corresponding to the quantum Levi-Civita connection.
This map, which classically amounts to the flip map, is uniquely defined by the fact that $\conn$ is a \emph{bimodule connection}, as shown in \cite{paper-projective}.
First we prove that the components of $\sigma$, relative to the decomposition $\calc = \calcP \oplus \calcM$ into "holomorphic and antiholomorphic parts", satisfy appropriate quadratic relations.
Then we use $\sigma$ to give a new presentation of the differential calculus $\calc^\bullet$, paralleling the classical picture.

\begin{theorem*}
The differential calculus $\calc^\bullet$ can be presented as
\[
\calc^\bullet = \talg(\calc) / \langle \mathrm{im}(\id + \sigma) \rangle,
\]
where $\talg(\calc)$ is the tensor algebra of the $\subalg$-bimodule $\calc$.
\end{theorem*}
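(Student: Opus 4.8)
The plan is to realize $\calc^\bullet$ as the stated quotient by building a surjective graded algebra map out of the tensor algebra and then identifying its kernel with the ideal generated by $\mathrm{im}(\id + \sigma)$. First I would invoke the Heckenberger--Kolb result that the canonical calculus $\calc^\bullet$ is generated in degree one as an algebra over $\subalg$. This produces a surjective homomorphism of graded $\subalg$-bimodule algebras $\pi \colon \talg(\calc) \to \calc^\bullet$ that is the identity on $\subalg$ in degree zero and on $\calc$ in degree one, and which in degree two is exactly the wedge product $\wedge \colon \calc \otimes_\subalg \calc \to \calc^2$. The theorem is then equivalent to the equality of two-sided ideals $\ker \pi = \langle \mathrm{im}(\id + \sigma) \rangle$.

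For the inclusion $\langle \mathrm{im}(\id + \sigma) \rangle \subseteq \ker \pi$, since $\ker \pi$ is a two-sided ideal and $\mathrm{im}(\id + \sigma)$ sits in degree two, it suffices to verify $\wedge \circ (\id + \sigma) = 0$, that is $\wedge \circ \sigma = - \wedge$. I would obtain this from the defining property of $\sigma$ as the braiding of the bimodule connection $\conn$, combined with the quadratic relations among its components $\sigmaMM, \sigmaMP, \sigmaPM, \sigmaPP$ established in the preceding section; these relations should in particular show that $\sigma$ restricts to an involution on the relevant pieces, so that $\mathrm{im}(\id + \sigma)$ is a genuine eigenspace complement to $\calc^2$. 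This step yields an induced surjection $\bar\pi$ from the quadratic quotient $\talg(\calc)/\langle \mathrm{im}(\id + \sigma) \rangle$ onto $\calc^\bullet$.

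The crux is the injectivity of $\bar\pi$, which I would establish by a degree-by-degree comparison of ranks. Because $\bar\pi$ is surjective in each degree, it is enough to bound the ranks of the quotient algebra from above by the known ranks of $\calc^\bullet$, which agree with the classical exterior powers. To compute these upper bounds I would use the decomposition $\calc = \calcP \oplus \calcM$ together with the quadratic relations of $\sigma$ to show that the quotient factorizes, in the appropriate braided sense, as a product of the subalgebras generated separately by $\calcP$ and by $\calcM$; this mirrors the decomposition of $\calc^\bullet$ into holomorphic and antiholomorphic degrees. Each factor is then a braided exterior algebra whose graded ranks are dictated by the spectrum of $\sigmaPP$ respectively $\sigmaMM$, and can be matched term by term with the classical binomial coefficients.

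The main obstacle is precisely this last count: showing that an ideal generated purely in degree two cuts the tensor algebra down to the correct rank in every degree, with no superfluous and no missing relations. This is a Poincaré--Birkhoff--Witt / Koszul-type phenomenon, and the delicate point will be controlling the mixed contributions coming from $\sigmaMP$ and $\sigmaPM$, which govern the interaction between the holomorphic and antiholomorphic directions and could a priori spoil the factorization. I expect the quadratic relations of the previous section to be exactly what forces the Hilbert series of the quotient to collapse onto the classical one; once the top degree is shown to be rank one (a quantum volume form) and the intermediate ranks match, injectivity of $\bar\pi$ follows and the presentation is proved.
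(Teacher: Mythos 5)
Your proposal stalls exactly at the step you yourself flag as the ``main obstacle'': injectivity of $\bar\pi$. You reduce the theorem to showing that the quadratic quotient $\talg(\calc)/\langle\mathrm{im}(\id+\sigma)\rangle$ has the same graded ranks as $\calc^\bullet$, to be established by a PBW/Koszul-type argument plus a braided factorization into holomorphic and antiholomorphic parts. None of this is carried out, and along your route it is the entire content of the theorem: proving that an ideal generated in degree two cuts the tensor algebra down to prescribed dimensions in \emph{every} degree is precisely the kind of hard flatness/PBW statement that Heckenberger and Kolb had to establish to compute their de Rham complex in the first place, and the mixed components $\sigmaPM$, $\sigmaMP$ that you worry about are exactly where such arguments succeed or fail. ``I expect the Hilbert series to collapse onto the classical one'' is a hope, not an argument. (A smaller point: your justification of $\wedge\circ(\id+\sigma)=0$ is also off --- $\sigmaPP$ and $\sigmaMM$ are \emph{not} involutions; they satisfy Hecke-type relations by \cref{prop:quadratic-relations}. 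A clean independent derivation instead uses torsion-freeness of $\conn$ together with the bimodule Leibniz rule, applied to elements of the form $\omega\otimes\diff b$, which span $\calc\otimes_\subalg\calc$.)

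What the proposal misses is the structural fact that makes the paper's proof short and finite: $\calc^\bullet$ is \emph{defined} by Heckenberger--Kolb as the universal differential calculus (maximal prolongation) of the FODC $\calc$, so its defining ideal in $\talg(\calc)$ is, by definition, generated in degree two by the sub-bimodule generated by $\left\{ \sum_i \diff a_i \otimes \diff b_i : \sum_i a_i \diff b_i = 0 \right\}$. Hence there is no dimension count to perform in higher degrees at all: both ideals are generated by sub-bimodules of $\calc\otimes_\subalg\calc$, and the theorem is equivalent to the equality of these two degree-two sub-bimodules. Moreover, the degree-two relations of $\calc^\bullet$ were computed explicitly by Heckenberger--Kolb (recalled in \eqref{eq:universal-calcP}, \eqref{eq:universal-calcM}, \eqref{eq:universal-mixed}), so the paper's proof reduces to two finite checks: (i) after rewriting via $\opS$ and $\opSt$, those relations coincide, up to scalars, with $(\id+\sigmaPP)(\del p\otimes\del p)$, $(\id+\sigmaMM)(\delbar p\otimes\delbar p)$ and $(\id+\sigma)(\delbar p\otimes\del p)$, so the known relations lie in $\mathrm{im}(\id+\sigma)$; and (ii) the one component not covered by this matching, $(\id+\sigma)(\del p\otimes\delbar p)$, is killed by $\wedge$, which is verified using an alternative expression for $\sigmaPM$ and \cref{lem:degree-two-relations}, so no extra relations are imposed. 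Without invoking this universality, your surjection $\bar\pi$ cannot be shown injective by the means you describe, so the proposal as it stands has a genuine gap.
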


In particular this implies the property $\wedge \circ (\id + \sigma) = 0$, which can be used to relate two different formulations of the notion of metric compatibility.

Before getting into the more geometrical aspects, we prove various results related to the differential calculus $\calc^\bullet$ and the inverse metric $\imet$.
We introduce a one-parameter family of maps $\splitmap: \calc^2 \to \calc \otimes_\subalg \calc$ \emph{splitting} the wedge product, that is satisfying $\splitmap \circ \wedge = \id$, which we are going to use to define the Ricci tensor.
We compute the \emph{quantum metric dimension} of $\calc^\bullet$, which is what we call the composition $\imet \circ \met$, corresponding in the classical case to the dimension of the space in consideration. We show that it is a scalar and determine its value using the notion of quantum dimension.
Finally we show that the inverse metric $\imet$ satisfies a certain \emph{twisted symmetry}, which involves the generalized braiding $\sigma$.

We now come to the Riemann tensor $\curv$ corresponding to the quantum Levi-Civita connection $\conn$.
We explicitly compute its expression on the generators of $\calc$.
These computations show that $\curv \in \calc^{(1, 1)} \otimes \calc$, where $\calc^{(1, 1)} \subset \calc^2$ denotes the space of $(1, 1)$-forms with respect to the naturally defined complex structure.
Moreover we prove the following result.

\begin{theorem*}
We have that $\curv: \calc \to \calc^2 \otimes_\subalg \calc$ is a $\subalg$-bimodule map.
\end{theorem*}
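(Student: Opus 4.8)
The plan is to split the statement into its two halves. Left $\subalg$-linearity of the curvature is automatic and requires nothing special about $\conn$: writing
\[
\curv = (\diff \otimes \id_\calc)\circ\conn - (\wedge\otimes\id_\calc)\circ(\id_\calc\otimes\conn)\circ\conn,
\]
applying the Leibniz rule of $\conn$ to $\conn(a\omega)$ and using $\diff^2 = 0$, all terms carrying a factor $\diff a$ cancel, giving $\curv(a\omega) = a\,\curv(\omega)$. Thus the whole content is right $\subalg$-linearity, $\curv(\omega a) = \curv(\omega)\,a$, which for a bare left connection would fail and must use that $\conn$ is a \emph{bimodule} connection with generalized braiding $\sigma$.

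First I would feed the bimodule-connection identity $\conn(\omega a) = \conn(\omega)\,a + \sigma(\omega\otimes\diff a)$ into the formula for $\curv$ and apply the Leibniz rule once more to the inner occurrence of $\conn$. Collecting the terms that reassemble into $\curv(\omega)\,a$, one is left with
\[
\curv(\omega a) = \curv(\omega)\,a + \Theta(\omega,a),
\]
\[
\Theta(\omega,a) = (\diff\otimes\id_\calc)\,\sigma(\omega\otimes\diff a) - (\wedge\otimes\id_\calc)(\id_\calc\otimes\conn)\,\sigma(\omega\otimes\diff a) - (\wedge\otimes\id_\calc)(\id_\calc\otimes\sigma)(\conn\omega\otimes\diff a).
\]
The theorem is then equivalent to the vanishing of the obstruction $\Theta(\omega,a)$ for all $\omega\in\calc$ and $a\in\subalg$.

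Next I would exploit torsion-freeness of the quantum Levi-Civita connection, $\wedge\circ\conn = \diff$, to rewrite the first summand of $\Theta$ as $(\wedge\otimes\id_\calc)(\conn\otimes\id_\calc)\,\sigma(\omega\otimes\diff a)$. This places the operator $\wedge\otimes\id_\calc$ in front of all three terms, so that
\[
\Theta(\omega,a) = (\wedge\otimes\id_\calc)\Big[(\conn\otimes\id_\calc)\,\sigma(\omega\otimes\diff a) - (\id_\calc\otimes\conn)\,\sigma(\omega\otimes\diff a) - (\id_\calc\otimes\sigma)(\conn\omega\otimes\diff a)\Big].
\]
By the presentation of $\calc^\bullet$ established earlier, the kernel of $\wedge$ is exactly $\mathrm{im}(\id + \sigma)$, so it suffices to show that the bracketed element is annihilated by $\wedge\otimes\id_\calc$. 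This has turned the problem into a purely first-order compatibility identity between $\conn$ and $\sigma$ — a "flatness-type" relation that no longer mentions the curvature at all.

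The main obstacle is precisely this identity, and I expect it to be where the specific braided structure of the quantum projective space enters, as opposed to any formal feature of bimodule connections. The verification is finite: right $\subalg$-linearity, once known for two elements, holds automatically for their product, so it is enough to treat $a$ in a set of algebra generators of $\subalg$; and since $\omega\mapsto\curv(\omega a)-\curv(\omega)\,a$ is left $\subalg$-linear, it is enough to treat $\omega$ in a left-generating set of $\calc$, on which $\curv$ has already been computed explicitly. The remaining check rests on the quadratic relations satisfied by the components $\sigmaPP,\sigmaPM,\sigmaMP,\sigmaMM$ of $\sigma$ with respect to the splitting $\calc = \calcP\oplus\calcM$, which should force the bracket into $\mathrm{im}(\id+\sigma)\otimes_\subalg\calc$ and thereby kill $\Theta$.
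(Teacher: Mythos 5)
Your reduction is sound as far as it goes: left $\subalg$-linearity is indeed automatic, the obstruction $\Theta(\omega,a)$ is correctly identified, torsion-freeness lets you pull $\wedge\otimes\id$ in front of all three terms, and the reduction to $a = p$ and $\omega \in \{\del p, \delbar p\}$ is legitimate (this is also how the paper proceeds, combining left-linearity with the right-module relations \eqref{eq:right-module-relations}). But everything up to that point is formal: it holds verbatim for any torsion-free bimodule connection on any calculus presented as $\talg(\calc)/\langle \mathrm{im}(\id+\sigma)\rangle$, and, as the paper itself stresses, for a general such connection the curvature is \emph{not} a right module map. So the entire content of the theorem sits in the one step you do not carry out: verifying that, on the generators, the bracketed element is killed by $\wedge\otimes\id$, i.e.\ lies in $\mathrm{im}(\id+\sigma)\otimes_\subalg\calc$.

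Your stated plan for that step --- that it ``rests on the quadratic relations satisfied by the components $\sigmaPP,\sigmaPM,\sigmaMP,\sigmaMM$'' of \cref{prop:quadratic-relations} --- is unsupported and almost certainly insufficient. Those relations only constrain the compositions $\sigma_{ab}\sigma_{ba}$; they carry no information about how $\sigma$ interacts with $\conn$ through the Leibniz rule, which is exactly what $\Theta$ measures. The paper's proof of \cref{thm:R-bimodule} shows what the verification actually consumes: the explicit formulas for $\curv(\del p)$ and $\curv(\delbar p)$ from \cref{lem:riemann-minus} and \cref{lem:riemann-plus}, the intertwining identity $\EV_{2 3} \opT_{3 4 5 6} \opT_{1 2 3 4} = \opT_{1 2 3 4} \EV_{4 5}$, the braid equation $\opT_{3 4 5 6} \opT_{1 2 3 4} \opT_{3 4 5 6} = \opT_{1 2 3 4} \opT_{3 4 5 6} \opT_{1 2 3 4}$, the relations \eqref{eq:right-module-relations}, and centrality of the metric components --- none of which your framework produces. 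There is also a smaller technical wrinkle you gloss over: $(\conn\otimes\id)$ and $(\id\otimes\conn)$ are not separately well defined on $\calc\otimes_\subalg\calc$ (only the combination defining $\curv$ is), so your ``bracketed element'' must be interpreted on chosen representatives before one can speak of it lying in $\mathrm{im}(\id+\sigma)\otimes_\subalg\calc$; this is fixable, but needs to be said. As it stands, the proposal is a correct reformulation of the statement, not a proof of it.
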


We remark that in general the curvature is a left module map, but is not guaranteed to be a right module map.
This non-trivial result has various interesting applications, which we discuss: it implies that the generalized braiding $\sigma$ can be extended to a map $\calc \otimes_\subalg \calc^n \to \calc^n \otimes_\subalg \calc$ for any $n \in \mathbb{N}$ in a natural way; it gives a certain antisymmetry property for the Riemann tensor; it implies that $\sigma$ satisfies the braid equation, possibly up to symmetric terms.

Finally we come to the Ricci tensor, whose definition depends on the auxiliary choice of a splitting map.
We compute its expression on the generators of $\calc$, using the one-parameter family of splitting maps $\splitmap$ discussed above.
We show that this free parameter can be uniquely fixed by requiring that $\ricci$ is symmetric. This leads to the following result.

\begin{theorem*}
We have a quantum analogue of the Einstein condition, that is
\[
\ricci = k \met.
\]
\end{theorem*}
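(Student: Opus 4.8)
The plan is to reduce the Einstein condition to an explicit computation on the generators of $\calc$, with the final proportionality forced by covariance. Recall that the Ricci tensor is obtained by splitting the $\calc^2$-leg of $\curv$ with $\splitmap$ and then contracting with the inverse metric; explicitly,
\[
\ricci = (\imet \otimes \id \otimes \id)(\id \otimes \splitmap \otimes \id)(\id \otimes \curv)(\met).
\]
The starting point is to substitute the explicit form of $\curv$ on the generators of $\calc$, obtained in the preceding sections, together with the chosen one-parameter family of splitting maps $\splitmap$. Since $\curv \in \calc^{(1,1)} \otimes \calc$, applying $\splitmap$ to its $(1,1)$-leg produces only terms in $\calcP \otimes_\subalg \calcM$ and $\calcM \otimes_\subalg \calcP$, so after contracting with $\imet$ the output decomposes as $\ricci = \ricciMP + \ricciPM$ according to $\calc = \calcP \oplus \calcM$, exactly mirroring the splitting $\met = \metMP + \metPM$ of the quantum metric.

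To pin down these components I would use covariance. Both $\met$ and $\ricci$ are $\Uqg$-covariant elements of $\calc \otimes_\subalg \calc$, built canonically from covariant data, and the spaces of covariant tensors in $\calcM \otimes_\subalg \calcP$ and in $\calcP \otimes_\subalg \calcM$ are each one-dimensional. This is the quantum counterpart of the uniqueness, up to scale, of the invariant Fubini--Study metric, and reflects the irreducibility of the relevant bimodules in the Heckenberger--Kolb calculus. Consequently one must have
\[
\ricciMP = a(t)\, \metMP, \qquad \ricciPM = b(t)\, \metPM,
\]
for scalars $a(t), b(t)$ depending on the free parameter $t$ of $\splitmap$, whose values the explicit computation of the previous step determines.

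It then remains to match the two constants. Here the relevant notion of symmetry for $\ricci$ is the braided one, $\sigma(\ricci) = \ricci$, which is the same symmetry enjoyed by $\met$; using the twisted symmetry of $\imet$ and the bimodule-map property of $\curv$ (the second theorem above), this condition translates into the single scalar equation $a(t) = b(t)$. I expect this to have a unique solution $t_0$, at which the antisymmetric part of $\ricci$ vanishes and $\ricci = k\met$ with $k := a(t_0) = b(t_0)$. The value of $k$ is finally read off by contracting the identity $\ricci = k\met$ with the inverse metric: since the scalar curvature is $\scal = \imet \circ \ricci$ and $\imet \circ \met$ is the scalar quantum metric dimension computed earlier, one obtains $k = \scal / (\imet \circ \met)$, with $\scal$ extracted from the explicit expression.

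The main obstacle I anticipate is the bookkeeping in the explicit contraction: the braiding components $\sigmaMP$ and $\sigmaPM$ must be carried consistently through $\splitmap$ and $\imet$, and one must verify that the symmetry condition genuinely determines $t$ uniquely, rather than being vacuous or over-determined. It is precisely here that the $(1,1)$-type of $\curv$ and the twisted symmetry of $\imet$ have to be used in tandem, and checking that the constant extracted from $\ricciMP$ coincides with the one from $\ricciPM$ is the crux of the argument.
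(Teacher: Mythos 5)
Your proposal is correct and shares its skeleton with the paper's proof: expand $(\id \otimes \curv)(\met)$ on generators, apply the one-parameter family of splitting maps from \cref{prop:split-map}, contract with $\imet$, obtain $\ricci = \ricciPM + \ricciMP$ with each component proportional to the corresponding component of $\met$, and fix the remaining parameter by a symmetry requirement. You depart from the paper in two places. First, the paper proves the proportionalities $\ricciPM = -\cMP q^{-2(\omega_s, 2\rho)} \qdim(V)\, \metPM$ and $\ricciMP = -\cPM q^{(\alpha_s, \alpha_s)} \qdim(V)\, \metMP$ by direct computation (\cref{lem:ricciPM}, \cref{lem:ricciMP}), whereas you predict their form from covariance plus the one-dimensionality of the invariant subspaces of $\calcP \otimes_\subalg \calcM$ and $\calcM \otimes_\subalg \calcP$. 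That Schur-type fact is true for quantum projective spaces (via Takeuchi's equivalence; it is the same fact that underlies uniqueness of the invariant metric), but it is not proved in this paper, so your route requires an extra lemma; what it buys is that the constants can then be obtained from the scalar contractions $\imet(\ricciPM)$, $\imet(\ricciMP)$ rather than from full tensor computations. Second, you fix the parameter by requiring $\sigma(\ricci) = \ricci$, while \cref{thm:symmetric-ricci} imposes $\wedge(\ricci) = 0$. Given the proportional form of the components these are equivalent: by \eqref{eq:metric-sigma} the braiding exchanges $\metPM$ and $\metMP$, and $\wedge(\metMP) = -\wedge(\metPM) \neq 0$, so both conditions reduce to the same scalar equation $\cMP q^{-2(\omega_s, 2\rho)} = \cPM q^{(\alpha_s, \alpha_s)}$, hence fix the same $\cPM, \cMP$ and yield the same Einstein constant. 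A small correction: this reduction uses only \eqref{eq:metric-sigma}; neither the twisted symmetry of $\imet$ (\cref{prop:inverse-metric-symmetry}) nor the bimodule property of $\curv$ (\cref{thm:R-bimodule}) is needed at that point, contrary to what you suggest. Your final step $k = \imet(\ricci)/\imet(\met)$, using the quantum metric dimension of \cref{prop:quantum-trace}, is consistent with the paper, which instead reads $k$ off the matched coefficients and then derives $\scal$ from it.
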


The Einstein constant $k$ is explicitly computed, and reduces to the corresponding value for the Fubini-Study metric in the classical limit.
We also compute the scalar curvature, which is a constant proportional to $k$ that also reduces to the correct classical value.

\subsection*{Organization}
The paper is organized as follows.
In \cref{sec:preliminaries} we discuss various preliminaries on quantum groups, focusing in particular on some properties of their categories of finite-dimensional modules.
In \cref{sec:projective-spaces} we summarize the various results obtained in \cite{paper-projective} for the quantum projective spaces, including quantum metrics and quantum Levi-Civita connections. This section also contains the various relations that form the technical backbone of this paper.
In \cref{sec:generalized-braiding} we prove various properties satisfied by the generalized braiding $\sigma$, in particular how it can be used to give a new presentation of the differential calculus $\calc^\bullet$.
In \cref{sec:additional-results} we prove various result involving the differential calculus and the inverse metric $\imet$, as discussed above.
In \cref{sec:riemann-tensor} we compute the Riemann tensor, show that it is a bimodule map and derive various consequences of this fact.
Finally, in \cref{sec:ricci-tensor} we use these results to compute the Ricci tensor. We show that, upon choosing an appropriate splitting map, we can have the Ricci tensor to be symmetric. Moreover this implies the quantum Einstein condition discussed above. We also compute the scalar curvature in this case.

This paper also contains various appendices, to which we relegate some technical matters.
In \cref{sec:classical-formulae} we write the classical limits of the various expressions obtained here, for better comparison between the classical and the quantum case.
In \cref{sec:properties-S} we recall various properties satisfied by the maps $\opS$ and $\opSt$ introduced in \cite{paper-projective}, which we use extensively in our computations here.
In \cref{sec:some-identities} we prove various identities that are used in the main text.
Finally in \cref{sec:relations-degree-two} we derive various equivalent expressions for the mixed relations between the generators of $\calc$, holding in the degree-two part of the differential calculus.

\medskip
\textbf{Acknowledgements.} I would like to thank Jyotishman Bhowmick for a discussion related to certain aspects of differential calculi.

\section{Preliminaries}
\label{sec:preliminaries}

In this section we recall some essential facts about quantum groups, in particular about the quantized enveloping algebras $\Uqg$.
We focus on the category of finite-dimensional $\Uqg$-modules, which is a braided monoidal category with duality, recalling various details about the braiding and the duality morphisms.
In what follows we use the language of tensor categories to make our computations more transparent, with \cite{egno} as our main reference.

\subsection{Quantum groups}

Let $\lieg$ be a complex simple Lie algebra.
We denote by $(\cdot, \cdot)$ the non-degenerate symmetric bilinear form induced by the Killing form, normalized in such a way that we have $(\alpha, \alpha) = 2$ for all short roots $\alpha$.
Write $\{\alpha_i\}_{i = 1}^r$ for the simple roots and $\{\omega_i\}_{i = 1}^r$ for the fundamental weights.
We also write, as customary, $\rho$ for the half-sum of the positive roots, or equivalently for the sum of all fundamental weights.

Given a real number $q$ such that $0 < q < 1$, the \emph{quantized enveloping algebra} $\Uqg$ is defined as a certain Hopf algebra deformation of the enveloping algebra $U(\lieg)$.
It has generators $\{ K_i, \ E_i, \ F_i \}_{i = 1}^r$ with $r = \mathrm{rank}(\lieg)$ and relations as in \cite[Section 6.1.2]{klsc}.
The representation theory of $\Uqg$ is essentially the same as for $U(\lieg)$. This means that we have a simple $\Uqg$-module denoted by $V(\lambda)$ for each dominant integral weight $\lambda$.

Dually we have the \emph{quantized coordinate ring} $\CqG$.
This is a subspace of the linear dual of $\Uqg$, namely the space of matrix coefficients of all finite-dimensional (type 1) $\Uqg$-modules, see for instance \cite[Section 2.2.6]{heko}.
By construction $\CqG$ is a Hopf algebra and a $\Uqg$-bimodule.
These actions can be used to define quantum homogeneous spaces as appropriate subalgebras of $\CqG$, for example the quantum projective spaces.

\subsection{Braiding}

In what follows, an important role is going to be played by the monoidal category of finite-dimensional $\Uqg$-modules.
This is a braided monoidal category, where we recall that a \emph{braiding} on a monoidal category is the choice of a natural isomorphism $X \otimes Y \cong Y \otimes X$ for each pair of objects $X$ and $Y$, satisfying the hexagon relations \cite[Definition 8.1.1]{egno}.
It is a generalization of the flip map in the category of vector spaces.

For the category of finite-dimensional $\Uqg$-modules we write the braiding as
\[
\braid_{V, W}: V \otimes W \to W \otimes V.
\]
The braiding is not unique and we adopt the same choice as \cite{heko}, described as follows.
Given two simple modules $V(\lambda)$ and $V(\mu)$, choose a highest weight vector $v_\lambda$ for the first and a lowest weight vector $w_{w_0 \mu}$ for the second (here $w_0$ denotes the longest word of the Weyl group, as usual).
Then the braiding is determined by
\[
\braid_{V(\lambda), V(\mu)} (v_\lambda \otimes w_{w_0 \mu}) = q^{(\lambda, w_0 \mu)} w_{w_0 \mu} \otimes v_\lambda.
\]
Indeed, $v_\lambda \otimes w_{w_0 \mu}$ is a cyclic vector for $V(\lambda) \otimes V(\mu)$, hence $\braid_{V(\lambda), V(\mu)}$ is completely determined by the action on this vector and the fact that it is a $\Uqg$-module map.
We also note that this implies the following: given weight vectors $v_i \in V(\lambda)$ and $w_j \in V(\mu)$, we have
\begin{equation}
\label{eq:braiding-terms}
\braid_{V(\lambda), V(\mu)} (v_i \otimes w_j) = q^{(\wt(v_i), \wt(w_j))} w_j \otimes v_i + \cdots,
\end{equation}
where any additional term in this expression has weight lower than $\wt(w_j)$ in the first leg and weight higher than $\wt(v_i)$ in the second leg.

\subsection{Duality}

The notion of duality in a monoidal category is captured by the existence of \emph{evaluation} and \emph{coevaluation} morphisms, which in our setting are described as follows.
Let $V$ be a finite-dimensional $\Uqg$-module and $V^*$ its linear dual.
Then, omitting the dependence on $V$ (which will be fixed in our computations), we have $\Uqg$-module maps
\[
\begin{gathered}
\EV: V^* \otimes V \to \bbC, \quad
\CV: \bbC \to V \otimes V^*, \\
\EVp: V \otimes V^* \to \bbC, \quad
\CVp: \bbC \to V^* \otimes V,
\end{gathered}
\]
satisfying the following duality relations (using standard leg-notation)
\begin{equation}
\label{eq:duality}
\begin{gathered}
\EV_{2 3} \CV_1 = \id, \quad
\EV_{1 2} \CV_2 = \id, \\
\EVp_{2 3} \CVp_1 = \id, \quad
\EVp_{1 2} \CVp_2 = \id.
\end{gathered}
\end{equation}
The maps $\EV$ and $\CV$ correspond to the existence of a \emph{left dual}, while $\EVp$ and $\CVp$ correspond to the existence of a \emph{right dual}.
In the case of the quantized enveloping algebra $\Uqg$, the property $S^2(X) = K_{2 \rho} X K_{2 \rho}^{-1}$ of the antipode guarantees that the two duals can be identified.

Let us now discuss the explicit formulae for these maps.
Take a weight basis $\{v_i\}_i$ of $V$ and write $\lambda_i := \wt(v_i)$ for the weight of $v_i$. For the corresponding dual basis $\{f^i\}_i$ of $V^*$ we have $\wt(f^i) = -\lambda_i$.
Then the evaluation and coevaluation maps are given by
\[
\begin{gathered}
\EV(f^i \otimes v_j) = \delta^i_j, \quad
\CV = \sum_i v_i \otimes f^i, \\
\EVp(v_i \otimes f^j) = q^{(\lambda_i, 2 \rho)} \delta^j_i, \quad
\CVp = \sum_i q^{-(\lambda_i, 2 \rho)} f^i \otimes v_i.
\end{gathered}
\]
The factor $q^{(\lambda_i, 2 \rho)}$ is related to the element $K_{2 \rho}$ in the square of the antipode.

We also have various compatibility relations with the braiding $\braid_{V, W}$, since the latter is a \emph{natural isomorphism} in both entries.
For the evaluation morphisms we have
\begin{equation}
\label{eq:evaluations}
\begin{gathered}
\EV_{1 2} = \EV_{2 3} (\braid_{V^*, W})_{1 2} (\braid_{V, W})_{2 3}, \quad
\EV_{2 3} = \EV_{1 2} (\braid_{W, V})_{2 3} (\braid_{W, V^*})_{1 2}, \\
\EVp_{1 2} = \EVp_{2 3} (\braid_{V, W})_{1 2} (\braid_{V^*, W})_{2 3}, \quad
\EVp_{2 3} = \EVp_{1 2} (\braid_{W, V^*})_{2 3} (\braid_{W, V})_{1 2}.
\end{gathered}
\end{equation}
Similarly, for the coevaluations morphisms we have
\begin{equation}
\label{eq:coevaluations}
\begin{gathered}
\CV_1 = (\braid_{W, V^*})_{2 3} (\braid_{W, V})_{1 2} \CV_2, \quad
\CV_2 = (\braid_{V, W})_{1 2} (\braid_{V^*, W})_{2 3} \CV_1, \\
\CVp_1 = (\braid_{W, V})_{2 3} (\braid_{W, V^*})_{1 2} \CVp_2, \quad
\CVp_2 = (\braid_{V^*, W})_{1 2} (\braid_{V, W})_{2 3} \CVp_1.
\end{gathered}
\end{equation}

We are also going to need the following identities.

\begin{lemma}
\label{lem:categorical-identities}
Let $V = V(\lambda)$ be a simple $\Uqg$-module. Then we have
\[
\EVp = q^{(\lambda, \lambda + 2 \rho)} \EV \circ \braid_{V, V^*}, \quad
\CVp = q^{(\lambda, \lambda + 2 \rho)} \braid_{V, V^*} \circ \CV.
\]
\end{lemma}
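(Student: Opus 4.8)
The plan is to exploit semisimplicity to reduce each identity to the determination of a single scalar. Since $V = V(\lambda)$ is simple and the category of finite-dimensional $\Uqg$-modules is semisimple, the adjunction isomorphism $\mathrm{Hom}_{\Uqg}(V \otimes V^*, \bbC) \cong \mathrm{Hom}_{\Uqg}(V, V)$ together with Schur's lemma shows that the space of module maps $V \otimes V^* \to \bbC$ is one-dimensional; dually, $\mathrm{Hom}_{\Uqg}(\bbC, V^* \otimes V)$ is one-dimensional as well. Now $\EVp$ and $\EV \circ \braid_{V, V^*}$ are both $\Uqg$-module maps $V \otimes V^* \to \bbC$ (the braiding is a module map by construction, and composition of module maps is a module map), and both are nonzero; hence they are proportional, say $\EVp = c \, \EV \circ \braid_{V, V^*}$. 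Likewise $\CVp = c' \, \braid_{V, V^*} \circ \CV$. It therefore remains only to compute the scalars $c$ and $c'$, which I would do by evaluating on a single cleverly chosen element.

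For the evaluation identity I would test on $v_\lambda \otimes f^\lambda$, where $v_\lambda$ is the highest weight vector of $V$ and $f^\lambda$ the dual basis element, of weight $-\lambda$, which is precisely the lowest weight vector of $V^* \cong V(-w_0 \lambda)$. On the one hand, the explicit formula gives $\EVp(v_\lambda \otimes f^\lambda) = q^{(\lambda, 2\rho)}$. On the other hand, applying the defining formula for the braiding on a highest-weight-times-lowest-weight vector yields $\braid_{V, V^*}(v_\lambda \otimes f^\lambda) = q^{(\lambda, -\lambda)} f^\lambda \otimes v_\lambda$, with \emph{no} additional terms: by \eqref{eq:braiding-terms} any such term would need first-leg weight strictly below $-\lambda$, i.e.\ a vector of weight strictly above $\lambda$ in $V$, which is impossible. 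Thus $\EV \circ \braid_{V, V^*}(v_\lambda \otimes f^\lambda) = q^{-(\lambda, \lambda)}$, and comparing gives $c = q^{(\lambda, 2\rho) + (\lambda, \lambda)} = q^{(\lambda, \lambda + 2\rho)}$, as claimed.

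For the coevaluation identity I would run the dual argument by extracting a single extremal coefficient. Writing $\braid_{V, V^*} \circ \CV = \sum_i \braid_{V, V^*}(v_i \otimes f^i)$, I would read off the coefficient of $f^{i_0} \otimes v_{i_0}$, where $v_{i_0}$ is the \emph{lowest} weight vector of $V$ (weight $w_0 \lambda$) and $f^{i_0}$ its dual. The key point is that $v_{i_0}$ cannot occur as the second leg of any additional braiding term, since by \eqref{eq:braiding-terms} such a leg must have weight strictly greater than $\lambda_i$, whereas $w_0 \lambda$ is minimal; hence only the leading term of $\braid_{V, V^*}(v_{i_0} \otimes f^{i_0})$ contributes, giving coefficient $q^{-(w_0 \lambda, w_0 \lambda)} = q^{-(\lambda, \lambda)}$. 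Comparing with the coefficient $q^{-(w_0 \lambda, 2\rho)}$ of $f^{i_0} \otimes v_{i_0}$ in $\CVp$ yields $c' = q^{(\lambda, \lambda) - (w_0 \lambda, 2\rho)}$, which equals $q^{(\lambda, \lambda + 2\rho)}$ upon using $w_0 \rho = -\rho$ and the $w_0$-invariance of the form, so that $(w_0 \lambda, 2\rho) = -(\lambda, 2\rho)$. The main obstacle throughout is controlling the unspecified lower-order terms of the braiding; my strategy circumvents this entirely by choosing extremal weight vectors for which the weight inequalities in \eqref{eq:braiding-terms} rule out every correction term, leaving only the leading contribution.
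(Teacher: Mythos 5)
Your proof is correct and follows essentially the same route as the paper's: both establish proportionality of the two morphisms from simplicity of $V$ (you just make the Schur/adjunction argument explicit), then pin down the scalar for the evaluation by testing on the highest-weight vector tensored with its dual, and for the coevaluation by extracting the coefficient of $f^{i_0} \otimes v_{i_0}$ at the lowest weight vector, using the weight constraints in \eqref{eq:braiding-terms} to rule out contributions from correction terms. The only cosmetic difference is that you derive the absence of lower-order terms on the extremal vector from \eqref{eq:braiding-terms}, whereas the paper can also read it off directly from the defining formula of the braiding on highest $\otimes$ lowest weight vectors.
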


\begin{proof}
We have that $\EVp$ and $\EV \circ \braid_{V, V^*}$ are both morphisms from $V \otimes V^*$ to $\bbC$.
Since $V$ is a simple module we must have $\EVp = c \EV \circ \braid_{V, V^*}$ for some $c \in \bbC$.
Consider $v_\lambda \otimes f^{-\lambda}$, where $v_\lambda$ is a highest weight vector of $V$ and $f^{-\lambda}$ is its dual, which is a lowest weight vector of $V^*$.
Then we have $\EVp(v_\lambda \otimes f^{-\lambda}) = q^{(\lambda, 2 \rho)}$. On the other hand we have
\[
\EV \circ \braid_{V, V^*}(v_\lambda \otimes f^{-\lambda})
= q^{-(\lambda, \lambda)} \EV (f^{-\lambda} \otimes v_\lambda) = q^{-(\lambda, \lambda)}.
\]
Comparing the two expressions we find $c = q^{(\lambda, \lambda + 2 \rho)}$.

Similarly we have that $\CVp$ and $\braid_{V, V^*} \circ \CV$ are both maps from $\bbC$ to $V^* \otimes V$.
Since $V$ is a simple module we must have $\CVp = c^\prime \braid_{V, V^*} \circ \CV$. Now consider $v_{w_0 \lambda} \otimes f^{-w_0 \lambda}$, where $v_{w_0 \lambda}$ is a lowest weight vector of $V$ and $f^{-w_0 \lambda}$ is its dual, which is a highest weight vector of $V^*$.
Taking into account the property \eqref{eq:braiding-terms} for the braiding, we have
\[
\braid_{V, V^*}(v_{w_0 \lambda} \otimes f^{-w_0 \lambda}) = q^{-(w_0 \lambda, w_0 \lambda)} f^{-w_0 \lambda} \otimes v_{w_0 \lambda} + \cdots.
\]
Observe that $f^{-w_0 \lambda} \otimes v_{w_0 \lambda}$ can not appear in the additional terms.
By the same argument the term $f^{-w_0 \lambda} \otimes v_{w_0 \lambda}$ can not appear in $\braid_{V, V^*}(v_i \otimes f^j)$ unless $v_i \otimes f^j = v_{w_0 \lambda} \otimes f^{-w_0 \lambda}$.

Since $\CV = \sum_i v_i \otimes f^i$, the term $f^{-w_0 \lambda} \otimes v_{w_0 \lambda}$ appears in $\braid_{V, V^*} \circ \CV$ with coefficient $q^{-(w_0 \lambda, w_0 \lambda)} = q^{-(\lambda, \lambda)}$, by the argument above.
On the other hand in $\CVp = \sum_i q^{-(\lambda_i, 2 \rho)} f^i \otimes v_i$ it appears with coefficient $q^{-(w_0 \lambda, 2 \rho)} = q^{-(\lambda, 2 w_0 \rho)}$.
We have $-w_0 \rho = \rho$, since $-w_0$ acts by permuting the fundamental weights and $\rho$ is the sum of all the fundamental weights.
Hence we obtain $q^{-(w_0 \lambda, 2 \rho)} = q^{(\lambda, 2 \rho)}$.
Comparing the two expressions we get $c^\prime = q^{(\lambda, \lambda + 2 \rho)}$.
\end{proof}

\subsection{Quadratic condition}

In the following we shall focus on the case of $U_q(\mathfrak{sl}_{r + 1})$ and fix $V = V(\omega_s)$ with $s = 1$ or $s = r$, corresponding to the fundamental module of $U_q(\mathfrak{sl}_{r + 1})$ or its dual.
The index $s$ is always going to be used in this sense.

With $V$ as above, we have that the braiding $\braid_{V, V}$ satisfies a quadratic relation, known as the \emph{Hecke relation} in this context. More precisely this is
\begin{equation}
\label{eq:hecke-relation}
(\braid_{V, V} - q^{(\omega_s, \omega_s)}) (\braid_{V, V} + q^{(\omega_s, \omega_s) - (\alpha_s, \alpha_s)}) = 0.
\end{equation}
This reflects the fact that the tensor product $V(\omega_s) \otimes V(\omega_s)$ has a decomposition into simple modules with only two summands, namely
\[
V(\omega_s) \otimes V(\omega_s) \cong V(2 \omega_s) \oplus V(2 \omega_s - \alpha_s).
\]
Note that both $\braid_{V, V}$ and $\braid_{V^*, V^*}$ satisfy such quadratic conditions.

\section{Results on quantum projective spaces}
\label{sec:projective-spaces}

In this section we recall various results related to quantum projective spaces.
We begin with the presentation of their algebras and the differential calculi of Heckenberger-Kolb \cite{heko}.
Next, we consider the quantum Riemannian geometry framework of \cite{quantum-book}, in particular the notions of quantum metric, inverse metric and quantum Levi-Civita connection.
These were all derived in \cite{paper-projective} for general quantum projective spaces, generalizing previous results for the quantum $2$-sphere.
The cited paper forms the technical backbone of the current one, to which we refer for some occasional unexplained definitions and notations used here.

\subsection{Quantum projective space}

We denote by $\subalg$ the algebra of functions on the $r$-dimensional quantum (complex) projective space, which is a subalgebra of $\bbC_q[SL_{r + 1}]$ generated by certain elements $\{p^{i j}\}_{i, j = 1}^{r +1}$ (see for instance \cite{paper-projective} for details).
The essential thing to know is that they correspond to the product of certain matrix coefficients of two $U_q(\mathfrak{sl}_{r + 1})$-modules, namely $V := V(\omega_s)$ (with $s = 1$ or $s = r$) and its dual $V^*$.

To write the relations between these elements, we introduce the maps
\begin{equation}
\label{eq:S-maps}
\begin{split}
\opS_{1 2 3} & := (\braid_{V, V^*})_{2 3} (\braid_{V, V})_{1 2} (\braid^{-1}_{V, V^*})_{2 3}, \\
\opSt_{2 3 4} & := (\braid_{V, V^*})_{2 3} (\braid^{-1}_{V^*, V^*})_{3 4} (\braid^{-1}_{V, V^*})_{2 3}.
\end{split}
\end{equation}
The main properties satisfied by these maps are recalled in \cref{sec:properties-S}.
Then, using index-free notation as in \cite{heko} and \cite{paper-projective}, we have the relations
\begin{equation}
\label{eq:algebra-relations}
\opS_{1 2 3} p p = q^{(\omega_s, \omega_s)} p p, \quad
\opSt_{2 3 4} p p = q^{-(\omega_s, \omega_s)} p p, \quad
\EVp_{1 2} p = q^{(\omega_s, 2 \rho)}.
\end{equation}
In this way we obtain presentation of the algebra $\subalg$ by generators and relations, as proven in \cite{heko}.
In what follows we shall mostly forget about the underlying quantum groups, but crucially use the categorical relations for the $U_q(\mathfrak{sl}_{r + 1})$-modules $V$ and $V^*$.

We should mention that, using \eqref{eq:algebra-relations}, it easily follows that the generators $p^{i j}$ are the entries of a projection.
This means that $\sum_k p^{i k} p^{k j} = p^{i j}$, or in index-free notation
\begin{equation}
\label{eq:projection-relation}
\EV_{2 3} p p = p.
\end{equation}
It can be derived by applying $\EVp_{1 2}$ to the first relation of \eqref{eq:algebra-relations} and using \eqref{eq:S-evaluation}.

\subsection{Heckenberger-Kolb calculus}
Now we describe the first-order differential calculus (abbreviated as FODC) on $\subalg$ introduced by Heckenberger and Kolb in \cite{heko}.
We denote it by $\calc$ and follow the presentation given in \cite{paper-projective}.
It is a direct sum $\calc := \calcP \oplus \calcM$, with $\calcP$ and $\calcM$ generated respectively by $\del p$ and $\delbar p$ as left $\subalg$-modules, and with relations
\begin{equation}
\label{eq:calculus-relations}
\begin{gathered}
\opSt_{2 3 4} p \del p = q^{-(\omega_s, \omega_s)} p \del p, \quad
\EVp_{1 2} \del p = 0, \\
\opS_{1 2 3} p \delbar p = q^{(\omega_s, \omega_s)} p \delbar p, \quad
\EVp_{1 2} \delbar p = 0.
\end{gathered}
\end{equation}

To describe the right $\subalg$-module structure of $\calc$ it is convenient to write
\begin{equation}
\label{eq:T-map}
\opT_{1 2 3 4} := \opS_{1 2 3} \opSt_{2 3 4} = \opSt_{2 3 4} \opS_{1 2 3}.
\end{equation}
Then the right $\subalg$-module relations are given by
\begin{equation}
\label{eq:right-module-relations}
\begin{split}
\del p p & = q^{(\alpha_s, \alpha_s)} \opT_{1 2 3 4} p \del p = q^{(\alpha_s, \alpha_s) - (\omega_s, \omega_s)} \opS_{1 2 3} p \del p, \\
\delbar p p & = q^{-(\alpha_s, \alpha_s)} \opT_{1 2 3 4} p \delbar p = q^{(\omega_s, \omega_s) - (\alpha_s, \alpha_s)} \opSt_{2 3 4} p \delbar p.
\end{split}
\end{equation}
Here the alternative expressions are obtained using \eqref{eq:calculus-relations}.
We also note that, using the relations above, we can rewrite those of \eqref{eq:calculus-relations} with $p$ appearing on the right.

Finally one can obtain the following simple relations for $\del p$ and $\delbar p$:
\begin{equation}
\label{eq:evaluations-calculi}
\begin{gathered}
\EV_{2 3} p \del p = 0, \quad
\EV_{2 3} \del p p = \del p, \\
\EV_{2 3} p \delbar p = \delbar p, \quad
\EV_{2 3} \delbar p p = 0.
\end{gathered}
\end{equation}
For more details about these, see for instance \cite[Lemma 5.2]{kahler} (with different notation).

\subsection{Fubini-Study metric}

The natural metric on a projective space is given by the \emph{Fubini-Study metric}.
A quantum analogue of this metric is given by $\met := \metPM + \metMP$, where
\begin{equation}
\label{eq:quantum-metric}
\metPM := \EVp_{1 2} \EV_{2 3} \del p \otimes \delbar p, \quad
\metMP := \EVp_{1 2} \EV_{2 3} \delbar p \otimes \del p.
\end{equation}
It can be seen that $\met$ reduces to the Fubini-Study metric in the classical case.

We have that $\met$ is a \emph{quantum metric} in the sense of Beggs-Majid \cite{quantum-book}.
This notion requires the existence of an appropriate \emph{inverse metric}, which amounts to a $\subalg$-bimodule map $(\cdot, \cdot): \calc \otimes_\subalg \calc \to \subalg$ such that the following conditions hold
\[
(\imet \otimes \id) (\omega \otimes \met) = \omega = (\id \otimes \imet) (\met \otimes \omega), \quad \forall \omega \in \calc.
\]

As shown in \cite[Theorem 6.11]{paper-projective}, the inverse metric corresponding to the Fubini-Study metric $\met$ is given by $(\del p, \del p) = (\delbar p, \delbar p) := 0$ and by
\begin{equation}
\label{eq:inverse-metric}
\begin{split}
(\del p, \delbar p) & := q^{(\alpha_s, \alpha_s)} q^{-(\omega_s, \omega_s + 2 \rho)} \opS_{1 2 3} \CV_3 p - q^{(\alpha_s, \alpha_s)} q^{-(\omega_s, 2 \rho)} p p, \\
(\delbar p, \del p) & := \CVp_2 p - q^{-(\omega_s, 2 \rho)} p p.
\end{split}
\end{equation}
Further properties of the quantum metric $\met$ are recalled later on.

\subsection{Levi-Civita connection}

According to \cite[Proposition 7.1 and Proposition 7.2]{paper-projective} we have a (left) connection $\conn: \calc \to \calc \otimes_\subalg \calc$ defined by
\begin{equation}
\label{eq:connection}
\begin{split}
\conn(\del p) & := q^{(\alpha_s, \alpha_s)} \EV_{2 3} \opS_{1 2 3} \opSt_{2 3 4} \delbar p \otimes \del p - q^{(\alpha_s, \alpha_s)} q^{-(\omega_s, 2 \rho)} p \metMP, \\
\conn(\delbar p) & := \EV_{2 3} \del p \otimes \delbar p - q^{-(\omega_s, 2 \rho)} p \metPM.
\end{split}
\end{equation}
In the classical case it reduces to the \emph{Levi-Civita connection} with respect to the Fubini-Study metric (acting on the cotangent bundle).
In the quantum case it satisfies the conditions of a \emph{quantum Levi-Civita connection} in the sense of \cite{quantum-book}, as shown in \cite[Theorem 8.4]{paper-projective}.
This essentially means that $\conn$ is torsion-free and compatible with the quantum metric $\met$.
In particular this implies that that $\conn$ is a bimodule connection (see below for this definition).

We should also mention that, due to the results of \cite{connections-irreducible}, this connection coincides with the unique \emph{covariant} connection on $\calc$ (and hence also with the one in \cite{projective-bundle}).

\subsection{Generalized braiding}

We recall that $\conn$ being a \emph{bimodule connection} means that there exists a $\subalg$-bimodule map $\sigma: \calc \otimes_\subalg \calc \to \calc \otimes_\subalg \calc$, called the \emph{generalized braiding}, such that
\[
\conn(\omega b) = \sigma(\omega \otimes \diff b) + \conn(\omega) b, \quad
\omega \in \calc, \ b \in \subalg.
\]
This map is unique, when it exists.
The main property of bimodule connections is that they naturally lift to connections on tensor products (in particular to $\calc \otimes_\subalg \calc$).

The quantum Levi-Civita connection $\conn$ is a bimodule connection, as shown in \cite[Proposition 8.3]{paper-projective}.
The corresponding generalized braiding $\sigma$ is given by
\begin{equation}
\label{eq:generalized-braiding}
\begin{split}
\sigma(\del p \otimes \del p) & := q^{(\alpha_s, \alpha_s)} \opS_{1 2 3} \opSt_{2 3 4} \del p \otimes \del p, \\
\sigma(\delbar p \otimes \delbar p) & := q^{-(\alpha_s, \alpha_s)} \opS_{1 2 3} \opSt_{2 3 4} \delbar p \otimes \delbar p, \\
\sigma(\del p \otimes \delbar p) & := q^{2 (\alpha_s, \alpha_s) - 2 (\omega_s, \omega_s)} \opS_{1 2 3} \opSt_{2 3 4}^{-1} \delbar p \otimes \del p + (1 - q^{(\alpha_s, \alpha_s)}) q^{(\alpha_s, \alpha_s)} q^{-(\omega_s, 2 \rho)} p \metMP p, \\
\sigma(\delbar p \otimes \del p) & := q^{2 (\omega_s, \omega_s) - 2 (\alpha_s, \alpha_s)} \opS_{1 2 3}^{-1} \opSt_{2 3 4} \del p \otimes \delbar p + (1 - q^{-(\alpha_s, \alpha_s)}) q^{-(\omega_s, 2 \rho)} p \metPM p.
\end{split}
\end{equation}
In the classical case it simply reduces to the flip map.

In the following we write $\sigma_{a b}: \calc_a \otimes_\subalg \calc_b \to \calc_b \otimes_\subalg \calc_a$ (with $a, b \in \{ +, - \}$) for the restriction of the map $\sigma$ to $\calc_a \otimes_\subalg \calc_b$. Note that these are all $\subalg$-bimodule maps.

\subsection{Properties of the metric}

Finally we recall some additional properties satisfied by the quantum metric $\met$.
It is \emph{symmetric}, in the sense that
\begin{equation}
\label{eq:metric-symmetric}
\wedge(\met) = 0.
\end{equation}
Here $\wedge: \calc \otimes_\subalg \calc \to \calc^2$ denotes the product of one-forms, as usual.
Observe that, given the decomposition of $\met$, this implies the identity $\wedge(\metPM) = - \wedge(\metMP)$.

Classically the Fubini-Study metric on a complex projective space is a \emph{Kähler metric}.
This statement is also true in the quantum case, once suitably interpreted.
For our purposes this means the identities (see the proof of \cite[Proposition 6.5]{paper-projective})
\begin{equation}
\label{eq:kahler-property}
(\diff \otimes \id)(\met_{a b}) = (\id \otimes \diff)(\met_{a b}) = 0, \quad a, b \in \{ +, - \}.
\end{equation}

Finally we have some identities related the \emph{compatibility} of the quantum Levi-Civita connection $\nabla$ with the quantum metric $\met$.
These are (see the proof of \cite[Theorem 8.4]{paper-projective})
\begin{equation}
\label{eq:metric-compatibility}
(\conn \otimes \id)(\met_{a b}) = (\id \otimes \conn)(\met_{a b}) = 0, \quad a, b \in \{ +, - \}.
\end{equation}

\section{Properties of the generalized braiding}
\label{sec:generalized-braiding}

In this section we prove various properties satisfied by the generalized braiding $\sigma$.
First, we show that its components satisfy certain quadratic relations.
Then we use this result to give a new presentation of the Heckenberger-Kolb calculus $\calc^\bullet$: we show that it can be obtained from the tensor algebra of the FODC $\calc$ upon taking the quotient by the ideal generated by $\mathrm{im}(\id + \sigma)$.
This parallels the classical construction of $\calc^\bullet$ from the space of one-forms $\calc$.

\subsection{Quadratic relations}

Our goal here is to show that the components of the generalized braiding $\sigma_{a b}$ (with $a, b \in \{ +, - \}$) satisfy certain quadratic relations.
We begin by computing the action of $\sigma$ on the components of the quantum metric $\met$.

\begin{lemma}
We have
\begin{equation}
\label{eq:metric-sigma}
\sigma(\metPM) = \metMP, \quad
\sigma(\metMP) = \metPM.
\end{equation}
In particular $\sigma(\met) = \met$.
\end{lemma}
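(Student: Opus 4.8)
The plan is to prove the two displayed identities separately and directly from the definitions, since $\sigma(\met) = \met$ then follows immediately by adding them and using $\met = \metPM + \metMP$. First I would observe that $\metPM \in \calcP \otimes_\subalg \calcM$ and $\metMP \in \calcM \otimes_\subalg \calcP$, so that $\sigma$ acts on them through its components $\sigmaPM$ and $\sigmaMP$ respectively, and in particular the target of $\sigma(\metPM)$ is indeed $\calcM \otimes_\subalg \calcP$, where $\metMP$ lives.

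To compute $\sigma(\metPM)$ I would substitute the defining expression $\metPM = \EVp_{1 2}\EV_{2 3}\, \del p \otimes \delbar p$ from \eqref{eq:quantum-metric} and evaluate $\sigma$ on the generator $\del p \otimes \delbar p$ using the third line of \eqref{eq:generalized-braiding}, applying the evaluation contractions $\EVp_{1 2}\EV_{2 3}$ to the result. This produces two contributions: a \emph{permuted} term coming from $q^{2(\alpha_s,\alpha_s)-2(\omega_s,\omega_s)}\opS_{1 2 3}\opSt_{2 3 4}^{-1}\,\delbar p \otimes \del p$, and a \emph{projection} term coming from $(1-q^{(\alpha_s,\alpha_s)})q^{(\alpha_s,\alpha_s)}q^{-(\omega_s,2\rho)}\, p\,\metMP\, p$.

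For the projection term I would collapse the two flanking copies of $p$ under $\EVp_{1 2}\EV_{2 3}$ using the projection relation \eqref{eq:projection-relation} together with the evaluation relation $\EVp_{1 2} p = q^{(\omega_s, 2 \rho)}$ from \eqref{eq:algebra-relations}, which leaves a scalar multiple of $\metMP$. For the permuted term I would reduce $\EVp_{1 2}\EV_{2 3}\opS_{1 2 3}\opSt_{2 3 4}^{-1}$ acting on $\delbar p \otimes \del p$ to a scalar multiple of $\metMP = \EVp_{1 2}\EV_{2 3}\,\delbar p \otimes \del p$, by invoking the identities governing how the evaluations absorb $\opS$ and $\opSt$ recalled in \cref{sec:properties-S}. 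The step I expect to be the only real obstacle is the resulting $q$-power bookkeeping: one must verify that the coefficient extracted from the permuted term and the coefficient of the projection term sum to exactly $1$, so that $\sigma(\metPM) = \metMP$. (As a consistency check, the factor $(1-q^{(\alpha_s,\alpha_s)})$ kills the projection term in the classical limit, where $\sigma$ degenerates to the flip and the identity reduces to $\metPM \mapsto \metMP$.)

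The identity $\sigma(\metMP) = \metPM$ I would then prove by the same scheme, using the fourth line of \eqref{eq:generalized-braiding} for $\sigmaMP$ together with $\metMP = \EVp_{1 2}\EV_{2 3}\,\delbar p \otimes \del p$; the computation mirrors the previous one under the interchange of the holomorphic and antiholomorphic generators, with the exponents involving $(\alpha_s,\alpha_s)$ reversed in sign accordingly. Finally, adding the two identities yields $\sigma(\met) = \sigma(\metPM) + \sigma(\metMP) = \metMP + \metPM = \met$, as claimed.
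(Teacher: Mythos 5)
Your proposal follows essentially the same route as the paper's proof: expand $\sigma$ on the generators via \eqref{eq:generalized-braiding}, collapse the projection term $p\,\metMP\,p$ using centrality, the projection relation \eqref{eq:projection-relation} and $\EVp_{1 2}\, p = q^{(\omega_s, 2\rho)}$, and reduce the permuted term to a scalar multiple of the opposite metric component so that the two coefficients sum to exactly $1$. The only point you leave implicit is that this last reduction is not pure absorption: one must first invoke the quadratic (Hecke) relations \eqref{eq:S-quadratic-relations} (these are what \cref{sec:properties-S} supplies; the absorption identities \eqref{eq:S-evaluation} live in \cref{sec:some-identities}) to trade $\opS_{1 2 3}$ and $\opSt_{2 3 4}^{-1}$ for their absorbable counterparts, with one cross term killed by $\EVp_{3 4}\, \delbar p \otimes \del p = 0$ --- precisely the bookkeeping you flagged, and it works out as you predict.
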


\begin{proof}
\textit{The term $\sigma(\metPM)$.}
Applying the generalized braiding \eqref{eq:generalized-braiding} to $\metPM$ we get
\[
\begin{split}
\sigma(\metPM) & = q^{2 (\alpha_s, \alpha_s) - 2 (\omega_s, \omega_s)} \EVp_{1 2} \EV_{2 3} \opS_{1 2 3} \opSt_{2 3 4}^{-1} \delbar p \otimes \del p \\
& + (1 - q^{(\alpha_s, \alpha_s)}) q^{(\alpha_s, \alpha_s)} q^{-(\omega_s, 2 \rho)} \EVp_{1 2} \EV_{2 3} p \metMP p.
\end{split}
\]
Let us abbreviate this as
\[
\sigma(\metPM) = q^{2 (\alpha_s, \alpha_s) - 2 (\omega_s, \omega_s)} A_1 + (1 - q^{(\alpha_s, \alpha_s)}) q^{(\alpha_s, \alpha_s)} q^{-(\omega_s, 2 \rho)} A_2.
\]

Consider the term $A_1$. Using the quadratic relation \eqref{eq:S-quadratic-relations} for $\opS_{1 2 3}$ we get
\[
\begin{split}
A_1 & = q^{2 (\omega_s, \omega_s) - (\alpha_s, \alpha_s)} \EVp_{1 2} \EV_{2 3} \opS_{1 2 3}^{-1} \opSt_{2 3 4}^{-1} \delbar p \otimes \del p + (1 - q^{-(\alpha_s, \alpha_s)}) q^{(\omega_s, \omega_s)} \EVp_{1 2} \EV_{2 3} \opSt_{2 3 4}^{-1} \delbar p \otimes \del p.
\end{split}
\]
Using \eqref{eq:S-quadratic-relations} once more for $\opSt_{2 3 4}^{-1}$ we get
\[
\begin{split}
A_1 & = q^{2 (\omega_s, \omega_s) - (\alpha_s, \alpha_s)} \EVp_{1 2} \EV_{2 3} \opS_{1 2 3}^{-1} \opSt_{2 3 4}^{-1} \delbar p \otimes \del p \\
& + (1 - q^{-(\alpha_s, \alpha_s)}) q^{3 (\omega_s, \omega_s) - (\alpha_s, \alpha_s)} \EVp_{1 2} \EV_{2 3} \opSt_{2 3 4} \delbar p \otimes \del p \\
& + (1 - q^{-(\alpha_s, \alpha_s)})^2 q^{2 (\omega_s, \omega_s)} \EVp_{1 2} \EV_{2 3} \delbar p \otimes \del p.
\end{split}
\]
The second term vanishes due to the identities $\EV_{2 3} \opSt_{2 3 4} = q^{-(\omega_s, \omega_s + 2 \rho)} \EVp_{3 4}$ and $\EVp_{3 4} \delbar p \otimes \del p = 0$, which correspond to \eqref{eq:S-evaluation} and \eqref{eq:tensor-algebra-evaluations} respectively. Then
\[
\begin{split}
A_1 & = q^{2 (\omega_s, \omega_s) - (\alpha_s, \alpha_s)} \EVp_{1 2} \EV_{2 3} \opS_{1 2 3}^{-1} \opSt_{2 3 4}^{-1} \delbar p \otimes \del p + q^{2 (\omega_s, \omega_s)} (1 - q^{-(\alpha_s, \alpha_s)})^2 \EVp_{1 2} \EV_{2 3} \delbar p \otimes \del p.
\end{split}
\]
Now using the relation \eqref{eq:S-evaluation} twice we have
\[
\EVp_{1 2} \EV_{2 3} \opS_{1 2 3}^{-1} \opSt_{2 3 4}^{-1} = q^{-(\omega_s, \omega_s + 2 \rho)} \EVp_{1 2} \EVp_{1 2} \opSt_{2 3 4}^{-1}
= q^{-(\omega_s, \omega_s + 2 \rho)} \EVp_{1 2} \EVp_{3 4} \opSt_{2 3 4}^{-1} = \EVp_{1 2} \EV_{2 3}.
\]
Therefore we obtain
\[
\begin{split}
A_1 & = q^{2 (\omega_s, \omega_s) - (\alpha_s, \alpha_s)} \EVp_{1 2} \EV_{2 3} \delbar p \otimes \del p + q^{2 (\omega_s, \omega_s)} (1 - q^{-(\alpha_s, \alpha_s)})^2 \EVp_{1 2} \EV_{2 3} \delbar p \otimes \del p \\
& = q^{2 (\omega_s, \omega_s) - 2 (\alpha_s, \alpha_s)} (q^{2 (\alpha_s, \alpha_s)} - q^{(\alpha_s, \alpha_s)} + 1) \metMP.
\end{split}
\]
Now consider the term $A_2$. Taking into account that the component $\metMP$ of the quantum metric is central, we use the algebra relations to compute
\[
A_2 = \EVp_{1 2} \EV_{2 3} p \metMP p = \EVp_{1 2} \EV_{2 3} p p \metMP
= \EVp_{1 2} p \metMP = q^{(\omega_s, 2 \rho)} \metMP.
\]
Finally we obtain
\[
\sigmaPM(\metPM) = (q^{2 (\alpha_s, \alpha_s)} - q^{(\alpha_s, \alpha_s)} + 1) \metMP - (q^{2 (\alpha_s, \alpha_s)} - q^{(\alpha_s, \alpha_s)}) \metMP = \metMP.
\]

\textit{The term $\sigma(\metMP)$.}
Similarly applying \eqref{eq:generalized-braiding} to $\metMP$ we get
\[
\begin{split}
\sigma(\metMP) & = q^{2 (\omega_s, \omega_s) - 2 (\alpha_s, \alpha_s)} \EVp_{1 2} \EV_{2 3} \opS_{1 2 3}^{-1} \opSt_{2 3 4} \del p \otimes \delbar p \\
& + (1 - q^{-(\alpha_s, \alpha_s)}) q^{-(\omega_s, 2 \rho)} \EVp_{1 2} \EV_{2 3} p \metPM p.
\end{split}
\]
Let us abbreviate this as
\[
\sigma(\metMP) = q^{2 (\omega_s, \omega_s) - 2 (\alpha_s, \alpha_s)} B_1 + (1 - q^{-(\alpha_s, \alpha_s)}) q^{-(\omega_s, 2 \rho)} B_2.
\]

Consider the term $B_1$. Using \eqref{eq:S-evaluation} and \eqref{eq:S-quadratic-relations} we compute
\[
\begin{split}
B_1 & = q^{-(\omega_s, \omega_s + 2 \rho)} \EVp_{1 2} \EVp_{1 2} \opSt_{2 3 4} \del p \otimes \delbar p
= q^{-(\omega_s, \omega_s + 2 \rho)} \EVp_{1 2} \EVp_{3 4} \opSt_{2 3 4} \del p \otimes \delbar p \\
& = q^{(\alpha_s, \alpha_s) - 2 (\omega_s, \omega_s)} q^{-(\omega_s, \omega_s + 2 \rho)} \EVp_{1 2} \EVp_{3 4} \opSt_{2 3 4}^{-1} \del p \otimes \delbar p \\
& = q^{(\alpha_s, \alpha_s) - 2 (\omega_s, \omega_s)} \EVp_{1 2} \EV_{2 3} \del p \otimes \delbar p = q^{(\alpha_s, \alpha_s) - 2 (\omega_s, \omega_s)} \metPM.
\end{split}
\]
For the term $B_2$ we compute
\[
B_2 = \EVp_{1 2} \EV_{2 3} p p \metPM
= \EVp_{1 2} p \metPM
= q^{(\omega_s, 2 \rho)} \metPM.
\]
Therefore we obtain
\[
\begin{split}
\sigmaMP(\metMP) & = q^{2 (\omega_s, \omega_s) - 2 (\alpha_s, \alpha_s)} q^{(\alpha_s, \alpha_s) - 2 (\omega_s, \omega_s)} \metPM - (q^{-(\alpha_s, \alpha_s)} - 1) q^{-(\omega_s, 2 \rho)} q^{(\omega_s, 2 \rho)} \metPM \\
& = q^{-(\alpha_s, \alpha_s)} \metPM - (q^{-(\alpha_s, \alpha_s)} - 1) \metPM = \metPM. \qedhere
\end{split}
\]
\end{proof}

We are now ready to derive the quadratic relations for the maps $\sigma_{a b}$.

\begin{proposition}
\label{prop:quadratic-relations}
We have the identities
\[
\begin{gathered}
(\sigmaPP - q^{(\alpha_s, \alpha_s)} \id) \circ (\sigmaPP + \id) = 0, \\
(\sigmaMM - q^{-(\alpha_s, \alpha_s)} \id) \circ (\sigmaMM + \id) = 0, \\
\sigmaMP \circ \sigmaPM = \id, \quad \sigmaPM \circ \sigmaMP = \id.
\end{gathered}
\]
\end{proposition}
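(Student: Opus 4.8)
The plan is to verify each of the four identities on the relevant generator, namely $\del p \otimes \del p$, $\delbar p \otimes \delbar p$, $\del p \otimes \delbar p$ and $\delbar p \otimes \del p$, and then extend to the whole bimodule. This reduction is legitimate because every map in sight is an $\subalg$-bimodule map: the components $\sigma_{a b}$ are bimodule maps by construction, hence so are their composites, while each $\calc_a \otimes_\subalg \calc_b$ is generated as an $\subalg$-bimodule by the corresponding generator. Throughout I read off the action of $\sigma_{a b}$ from the explicit formulas \eqref{eq:generalized-braiding}, treating $\sigmaPP$ and $\sigmaMM$ as the descent to the bimodule of the categorical operator $\opT_{1 2 3 4} = \opS_{1 2 3} \opSt_{2 3 4}$ rescaled by $q^{\pm (\alpha_s, \alpha_s)}$, and $\sigmaPM$, $\sigmaMP$ as a categorical part plus a central correction proportional to $p \metMP p$, respectively $p \metPM p$.

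For the two pure relations I would exploit the calculus relations \eqref{eq:calculus-relations} to collapse $\opT_{1 2 3 4}$ to a single Hecke-type operator. On $\calcP \otimes_\subalg \calcP$ the relation \eqref{eq:calculus-relations} forces $\opSt_{2 3 4}$ to act by the scalar $q^{-(\omega_s, \omega_s)}$, so that $\sigmaPP = q^{(\alpha_s, \alpha_s) - (\omega_s, \omega_s)} \opS_{1 2 3}$ there; since $\opS_{1 2 3}$ and $\opSt_{2 3 4}$ commute by \eqref{eq:T-map}, this scalar action of $\opSt_{2 3 4}$ persists under a second application, and the quadratic relation \eqref{eq:S-quadratic-relations} for $\opS_{1 2 3}$ then yields exactly $(\sigmaPP - q^{(\alpha_s, \alpha_s)} \id)(\sigmaPP + \id) = 0$. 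The relation for $\sigmaMM$ is completely symmetric: on $\calcM \otimes_\subalg \calcM$ the relation \eqref{eq:calculus-relations} collapses $\opS_{1 2 3}$ to the scalar $q^{(\omega_s, \omega_s)}$, leaving $\sigmaMM = q^{(\omega_s, \omega_s) - (\alpha_s, \alpha_s)} \opSt_{2 3 4}$, and the Hecke relation for $\opSt_{2 3 4}$ gives the stated identity. A quick eigenvalue bookkeeping confirms the answer: in each case the two surviving $\opT_{1 2 3 4}$-eigenvalues are $1$ and $-q^{\mp (\alpha_s, \alpha_s)}$, which rescale to the advertised eigenvalues $q^{\pm (\alpha_s, \alpha_s)}$ and $-1$ of $\sigma_{\pm \pm}$.

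For the mixed relations I would compute the composite directly on the generator. Applying $\sigmaPM$ to $\del p \otimes \delbar p$ via \eqref{eq:generalized-braiding} produces a categorical term $q^{2 (\alpha_s, \alpha_s) - 2 (\omega_s, \omega_s)} \opS_{1 2 3} \opSt_{2 3 4}^{-1} \delbar p \otimes \del p$ plus a multiple of $p \metMP p$, and then applying $\sigmaMP$ produces a second categorical term carrying $q^{2 (\omega_s, \omega_s) - 2 (\alpha_s, \alpha_s)} \opS_{1 2 3}^{-1} \opSt_{2 3 4}$ plus a multiple of $p \metPM p$. The two scalar prefactors multiply to $1$ and, since $\opS_{1 2 3}$ and $\opSt_{2 3 4}$ commute, the categorical operators compose to $\opS_{1 2 3}^{-1} \opSt_{2 3 4} \opS_{1 2 3} \opSt_{2 3 4}^{-1} = \id$, returning the generator $\del p \otimes \delbar p$. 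On the central terms I would invoke the previous Lemma \eqref{eq:metric-sigma}, which gives $\sigmaMP(\metMP) = \metPM$ and hence $\sigmaMP(p \metMP p) = p \metPM p$ by the bimodule property; all remaining contributions proportional to $p \metPM p$ are then collected and shown to cancel using the explicit coefficients $(1 - q^{(\alpha_s, \alpha_s)}) q^{(\alpha_s, \alpha_s)} q^{-(\omega_s, 2 \rho)}$ and $(1 - q^{-(\alpha_s, \alpha_s)}) q^{-(\omega_s, 2 \rho)}$ of \eqref{eq:generalized-braiding}. The identity $\sigmaPM \circ \sigmaMP = \id$ follows by the mirror computation, interchanging the roles of $\del p$ and $\delbar p$.

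The hard part will be this last step: unlike the pure case, applying $\sigmaMP$ to the categorical image $\opS_{1 2 3} \opSt_{2 3 4}^{-1} \delbar p \otimes \del p$ is not a matter of substituting into the formula, since this element is not the bare generator. I would first have to rewrite it through the right-module relations \eqref{eq:right-module-relations} as an $\subalg$-bimodule combination of $\delbar p \otimes \del p$, thereby generating further central contributions, and only then apply $\sigmaMP$. Keeping track of these extra central contributions and verifying that they cancel exactly against the term coming from $\sigmaMP(p \metMP p)$ is the delicate bookkeeping at the heart of the argument; the commutativity of $\opS_{1 2 3}$ and $\opSt_{2 3 4}$ from \eqref{eq:T-map}, together with the Hecke relations \eqref{eq:S-quadratic-relations}, are what make this cancellation possible.
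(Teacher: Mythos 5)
Your first two paragraphs follow essentially the same route as the paper's own proof: for the pure components, collapse $\opT_{1 2 3 4}$ to a single Hecke-type operator using the scalar action of $\opSt_{2 3 4}$ on $\del p \otimes \del p$ (resp.\ of $\opS_{1 2 3}$ on $\delbar p \otimes \delbar p$) and then invoke \eqref{eq:S-quadratic-relations}; for the mixed components, compose the two formulas in \eqref{eq:generalized-braiding}, use $\sigma(\metMP) = \metPM$ together with the bimodule property for the central terms, use commutativity of $\opS_{1 2 3}$ and $\opSt_{2 3 4}$ for the categorical part, and cancel the leftover multiples of $p \metPM p$. One small imprecision: the scalar action of $\opSt_{2 3 4}$ on $\del p \otimes \del p$ is not literally \eqref{eq:calculus-relations} (which concerns $p\,\del p$ inside $\calcP$) but the tensor-algebra identity \eqref{eq:tensor-algebra-identities}, whose proof crucially uses that the tensor product is taken over $\subalg$; citing \eqref{eq:calculus-relations} alone hides exactly the subtlety the paper isolates in its appendix.

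The real problem is your last paragraph, which declares the ``heart of the argument'' to be an obstacle that does not exist. The element $\opS_{1 2 3} \opSt_{2 3 4}^{-1} \delbar p \otimes \del p$ is, entry by entry, a linear combination \emph{with scalar coefficients} of the generators $\delbar p^{i j} \otimes \del p^{k l}$: the operators $\opS$ and $\opSt$ act only on the external index legs, while $\sigmaMP$ is applied entry-wise. Hence $\sigmaMP \bigl( \opS_{1 2 3} \opSt_{2 3 4}^{-1} \delbar p \otimes \del p \bigr) = \opS_{1 2 3} \opSt_{2 3 4}^{-1} \sigmaMP ( \delbar p \otimes \del p )$ by linearity alone; no rewriting through the right-module relations \eqref{eq:right-module-relations} is needed, and no additional central contributions arise beyond the ones you already listed. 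This direct substitution is exactly what the paper does. What your sketch does gloss over is the one step that genuinely needs an argument: after the substitution, the categorical operator hits the central term produced by the inner $\sigmaMP$, and one must compute $\opS_{1 2 3} \opSt_{2 3 4}^{-1}\, p \metPM p = q^{2 (\omega_s, \omega_s)} p \metPM p$, using centrality of $\metPM$ and the algebra relations \eqref{eq:algebra-relations}. With that, the two central coefficients are $(1 - q^{-(\alpha_s, \alpha_s)}) q^{2 (\alpha_s, \alpha_s)} q^{-(\omega_s, 2 \rho)}$ and $(1 - q^{(\alpha_s, \alpha_s)}) q^{(\alpha_s, \alpha_s)} q^{-(\omega_s, 2 \rho)}$, which sum to zero, completing the argument. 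So your plan is sound once the imagined difficulty is deleted: what you defer as ``delicate bookkeeping'' is precisely the substitution you already performed in your second paragraph.
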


\begin{proof}
\textit{The maps $\sigmaPP$ and $\sigmaMM$.}
We begin with the map $\sigmaPP$.
Using the identity \eqref{eq:tensor-algebra-identities} we can rewrite the expression for $\sigmaPP$ in the following way
\[
\sigma(\del p \otimes \del p) = q^{(\alpha_s, \alpha_s)} \opT_{1 2 3 4} \del p \otimes \del p
= q^{(\alpha_s, \alpha_s) - (\omega_s, \omega_s)} \opS_{1 2 3} \del p \otimes \del p.
\]
Next, we rewrite the quadratic relation from \eqref{eq:S-quadratic-relations} in the form
\[
\opS_{1 2 3}^2 = (1 - q^{-(\alpha_s, \alpha_s)}) q^{(\omega_s, \omega_s)} \opS_{1 2 3} + q^{2 (\omega_s, \omega_s) - (\alpha_s, \alpha_s)}.
\]
Using this identity we compute
\[
\begin{split}
\sigmaPP^2(\del p \otimes \del p)
& = q^{2 (\alpha_s, \alpha_s) - 2 (\omega_s, \omega_s)} \opS_{1 2 3}^2 \del p \otimes \del p \\
& = (q^{(\alpha_s, \alpha_s)} - 1) q^{(\alpha_s, \alpha_s) - (\omega_s, \omega_s)} \opS_{1 2 3} \del p \otimes \del p + q^{(\alpha_s, \alpha_s)} \del p \otimes \del p \\
& = (q^{(\alpha_s, \alpha_s)} - 1) \sigmaPP(\del p \otimes \del p) + q^{(\alpha_s, \alpha_s)} \del p \otimes \del p.
\end{split}
\]
This expression can be rewritten in the form $(\sigmaPP - q^{(\alpha_s, \alpha_s)}) (\sigmaPP + 1) = 0$.

The case of $\sigmaMM$ is very similar. We use \eqref{eq:tensor-algebra-identities} again to rewrite
\[
\sigmaMM(\delbar p \otimes \delbar p) = q^{-(\alpha_s, \alpha_s)} \opT_{1 2 3 4} \delbar p \otimes \delbar p
= q^{(\omega_s, \omega_s) - (\alpha_s, \alpha_s)} \opSt_{2 3 4} \delbar p \otimes \delbar p.
\]
Moreover from \eqref{eq:S-quadratic-relations} we have
\[
\opSt_{2 3 4}^2 = (1 - q^{(\alpha_s, \alpha_s)}) q^{-(\omega_s, \omega_s)} \opSt_{2 3 4} + q^{(\alpha_s, \alpha_s) - 2 (\omega_s, \omega_s)}.
\]
A quick computation then leads to $(\sigmaMM - q^{-(\alpha_s, \alpha_s)}) (\sigmaMM + 1) = 0$.

\textit{The maps $\sigmaPM$ and $\sigmaMP$.}
Now let us consider the maps
\[
\begin{split}
\sigmaPM(\del p \otimes \delbar p) & = q^{2 (\alpha_s, \alpha_s) - 2 (\omega_s, \omega_s)} \opS_{1 2 3} \opSt_{2 3 4}^{-1} \delbar p \otimes \del p + (1 - q^{(\alpha_s, \alpha_s)}) q^{(\alpha_s, \alpha_s)} q^{-(\omega_s, 2 \rho)} p \metMP p, \\
\sigmaMP(\delbar p \otimes \del p) & = q^{2 (\omega_s, \omega_s) - 2 (\alpha_s, \alpha_s)} \opS_{1 2 3}^{-1} \opSt_{2 3 4} \del p \otimes \delbar p + (1 - q^{-(\alpha_s, \alpha_s)}) q^{-(\omega_s, 2 \rho)} p \metPM p.
\end{split}
\]
Taking into account \eqref{eq:metric-sigma} and that $\sigma$ is a $\subalg$-bimodule map we compute
\[
\begin{split}
\sigmaMP \sigmaPM(\del p \otimes \delbar p)
& = q^{2 (\alpha_s, \alpha_s) - 2 (\omega_s, \omega_s)} \opS_{1 2 3} \opSt_{2 3 4}^{-1} \sigmaMP(\delbar p \otimes \del p) \\
& + (1 - q^{(\alpha_s, \alpha_s)}) q^{(\alpha_s, \alpha_s)} q^{-(\omega_s, 2 \rho)} p \sigmaMP(\metMP) p \\
& = \del p \otimes \delbar p + (1 - q^{-(\alpha_s, \alpha_s)}) q^{2 (\alpha_s, \alpha_s) - 2 (\omega_s, \omega_s)} q^{-(\omega_s, 2 \rho)} \opS_{1 2 3} \opSt_{2 3 4}^{-1} p \metPM p \\
& + (1 - q^{(\alpha_s, \alpha_s)}) q^{(\alpha_s, \alpha_s)} q^{-(\omega_s, 2 \rho)} p \metPM p.
\end{split}
\]
We have $p \metPM p = p p \metPM$, since $\met$ is central. Moreover using \eqref{eq:algebra-relations} we have
\[
\opS_{1 2 3} \opSt_{2 3 4}^{-1} p p = q^{(\omega_s, \omega_s)} \opS_{1 2 3} p p = q^{2 (\omega_s, \omega_s)} p p.
\]
Then $\opS_{1 2 3} \opSt_{2 3 4}^{-1} p \metPM p = q^{2 (\omega_s, \omega_s)} p \metPM p$ and hence
\[
\begin{split}
\sigmaMP \sigmaPM(\del p \otimes \delbar p)
& = \del p \otimes \delbar p - (1 - q^{(\alpha_s, \alpha_s)}) q^{(\alpha_s, \alpha_s)} q^{-(\omega_s, 2 \rho)} p \metPM p \\
& + (1 - q^{(\alpha_s, \alpha_s)}) q^{(\alpha_s, \alpha_s)} q^{-(\omega_s, 2 \rho)} p \metPM p \\
& = \del p \otimes \delbar p.
\end{split}
\]

Similarly we check that $\sigmaPM \sigmaMP = \id$. We compute
\[
\begin{split}
\sigmaPM \sigmaMP(\delbar p \otimes \del p)
& = q^{2 (\omega_s, \omega_s) - 2 (\alpha_s, \alpha_s)} \opS_{1 2 3}^{-1} \opSt_{2 3 4} \sigmaPM(\del p \otimes \delbar p) \\
& + (1 - q^{-(\alpha_s, \alpha_s)}) q^{-(\omega_s, 2 \rho)} p \sigmaPM(\metPM) p \\
& = \del p \otimes \delbar p + (1 - q^{(\alpha_s, \alpha_s)}) q^{-(\alpha_s, \alpha_s)} q^{-(\omega_s, 2 \rho)} q^{2 (\omega_s, \omega_s)} \opS_{1 2 3}^{-1} \opSt_{2 3 4} p \metMP p \\
& + (1 - q^{-(\alpha_s, \alpha_s)}) q^{-(\omega_s, 2 \rho)} p \metMP p.
\end{split}
\]
Here we have also used \eqref{eq:metric-sigma}.
Similarly to what we did above, we compute
\[
\opS_{1 2 3}^{-1} \opSt_{2 3 4} p \metMP p = \opS_{1 2 3}^{-1} \opSt_{2 3 4} p p \metMP
= q^{-2 (\omega_s, \omega_s)} p \metMP p.
\]
Finally we obtain
\[
\begin{split}
\sigmaPM \sigmaMP(\delbar p \otimes \del p)
& = \del p \otimes \delbar p - (1 - q^{-(\alpha_s, \alpha_s)}) q^{-(\omega_s, 2 \rho)} p \metMP p \\
& + (1 - q^{-(\alpha_s, \alpha_s)}) q^{-(\omega_s, 2 \rho)} p \metMP p \\
& = \del p \otimes \delbar p. \qedhere
\end{split}
\]
\end{proof}

\begin{remark}
The relations above can be compared with those satisfied by the braiding on the category of finite-dimensional $\Uqg$-modules.
Formally this is done using Takeuchi's categorical equivalence, see for instance \cite[Section 2.2.8]{heko}.
There is a functor sending the object $\calc \otimes_\subalg \calc$ to the object $\calc / \subalg^+ \calc \otimes_\subalg \calc / \subalg^+ \calc$, where $\subalg^+$ denotes the elements of $\subalg$ killed by the counit.
It can be shown that, under this equivalence, the maps
\[
\sigmaPP, \quad
\sigmaMM, \quad
\sigmaPM, \quad
\sigmaMP,
\]
correspond respectively to the transposes of
\[
\hat{R}_{V, V}, \quad
\hat{R}_{V^*, V^*}, \quad
\hat{R}_{V, V^*}^{-1}, \quad
\hat{R}_{V, V^*},
\]
up to overall scalar multiples (we do not provide the details here).
This can be checked on the generators $\del p$ and $\delbar p$ using \cite[Proposition 3.6 and Proposition 3.7]{heko}.
\end{remark}

\subsection{Presentation calculus}

In this subsection we use the generalized braiding $\sigma$ to give a new presentation of the Heckenberger-Kolb calculus $\calc^\bullet$, paralleling the classical picture.

In order to formulate this more precisely, we need to recall the notion of universal differential calculus corresponding to a FODC, see for instance \cite[Section 2.3.2]{heko}.

\begin{definition}
Let $\Gamma$ be a FODC over an algebra $A$. The \emph{universal differential calculus} corresponding to $\Gamma$ is defined to be the quotient of the tensor algebra $T_A(\Gamma) = \bigoplus_{k = 0}^\infty \Gamma^{\otimes_A k}$ of the $A$-bimodule $\Gamma$ by the ideal generated by the subspace
\[
\left\{ \sum_i \diff a_i \otimes \diff b_i : \sum_i a_i \diff b_i = 0 \right\} \subset \Gamma \otimes_A \Gamma.
\]
\end{definition}

As the name suggests, there is a suitable universal property for $\Gamma_u$, expressing the fact that it is the most general differential calculus having $\Gamma$ as its first-order part.

The differential calculus $\calc^\bullet$ over $\subalg$, introduced by Heckenberger-Kolb in \cite{heko}, is defined to be the universal differential calculus corresponding to the FODC $\calc = \calcP \oplus \calcM$.
The corresponding quadratic relations are also explicitly determined in the cited paper, more precisely in \cite[Proposition 3.6, Proposition 3.7 and Proposition 3.11]{heko}.

We recall them here. The relations among the generators $\del p$ are
\begin{equation}
\label{eq:universal-calcP}
\begin{gathered}
(\braid_{V, V} + q^{(\omega_s, \omega_s) - (\alpha_s, \alpha_s)})_{1 2} (\braid_{V, V^*}^{-1})_{2 3} \del p \wedge \del p = 0, \\
(\braid_{V^*, V^*} - q^{(\omega_s, \omega_s)})_{3 4} (\braid_{V, V^*}^{-1})_{2 3} \del p \wedge \del p = 0.
\end{gathered}
\end{equation}
Similarly, the relations among the generators $\delbar p$ are
\begin{equation}
\label{eq:universal-calcM}
\begin{gathered}
(\braid_{V, V} - q^{(\omega_s, \omega_s)})_{1 2} (\braid_{V, V^*}^{-1})_{2 3} \delbar p \wedge \delbar p = 0, \\
(\braid_{V^*, V^*} + q^{(\omega_s, \omega_s) - (\alpha_s, \alpha_s)})_{3 4} (\braid_{V, V^*}^{-1})_{2 3} \delbar p \wedge \delbar p = 0.
\end{gathered}
\end{equation}
Finally, the mixed relations among $\del p$ and $\delbar p$ are
\begin{equation}
\label{eq:universal-mixed}
\delbar p \wedge \del p = - q^{-(\alpha_s, \alpha_s)} \opT_{1 2 3 4}^{-1} \del p \wedge \delbar p + q^{-(\alpha_s, \alpha_s)} q^{-(\omega_s, 2 \rho)} \EVp_{3 4} \opT^{-1}_{3 4 5 6} p \del p \wedge \delbar p.
\end{equation}
This expression can be rewritten in several ways, as we show in \cref{lem:degree-two-relations}.

We are now ready to formulate our result.

\begin{theorem}
\label{thm:quotient-symmetrizer}
The differential calculus $\calc^\bullet$ over $\subalg$ can be presented as
\[
\calc^\bullet = \talg(\calc) / \langle \mathrm{im} (\id + \sigma) \rangle.
\]
\end{theorem}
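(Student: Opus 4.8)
The plan is to reduce the statement to the single identity of sub-bimodules
\[
\mathrm{im}(\id + \sigma) = \ker(\wedge) \subset \calc \otimes_\subalg \calc,
\]
and then to verify it on the summands of a natural decomposition. The reduction is formal: since $\calc^\bullet$ is \emph{by definition} the universal differential calculus of the FODC $\calc$, its space of degree-two relations is precisely $\ker(\wedge)$, and the degree-two part of the ideal $\langle \ker(\wedge) \rangle$ in $\talg(\calc)$ equals $\ker(\wedge)$ itself; the same holds for $\langle \mathrm{im}(\id + \sigma) \rangle$. As both $\calc^\bullet$ and $\talg(\calc) / \langle \mathrm{im}(\id + \sigma) \rangle$ are thus quotients of $\talg(\calc)$ by ideals generated in degree two, the displayed equality forces the two ideals, and hence the two graded algebras in every degree, to coincide. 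Moreover $\ker(\wedge)$ is exactly the span of the Heckenberger--Kolb relations \eqref{eq:universal-calcP}, \eqref{eq:universal-calcM} and \eqref{eq:universal-mixed}, which is what I would compare against.

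Next I would use the decomposition into holomorphic, antiholomorphic and mixed parts,
\[
\calc \otimes_\subalg \calc = (\calcP \otimes_\subalg \calcP) \oplus (\calcM \otimes_\subalg \calcM) \oplus (\calcP \otimes_\subalg \calcM) \oplus (\calcM \otimes_\subalg \calcP).
\]
By \eqref{eq:generalized-braiding} the map $\sigma$ preserves the first two summands and interchanges the last two, while $\wedge$ preserves the bidegree of forms; hence both $\mathrm{im}(\id + \sigma)$ and $\ker(\wedge)$ split as direct sums over the holomorphic block, the antiholomorphic block, and the combined mixed block $(\calcP \otimes_\subalg \calcM) \oplus (\calcM \otimes_\subalg \calcP)$, which I would then treat separately.

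On the holomorphic block, \cref{prop:quadratic-relations} shows that $\id + \sigmaPP$ annihilates the $(-1)$-eigenspace of $\sigmaPP$ and is invertible on the $q^{(\alpha_s, \alpha_s)}$-eigenspace, so $\mathrm{im}(\id + \sigmaPP)$ equals the latter eigenspace. Using \eqref{eq:tensor-algebra-identities} to write $\sigmaPP = q^{(\alpha_s, \alpha_s) - (\omega_s, \omega_s)} \opS_{1 2 3}$ on $\del p \otimes \del p$, together with the fact that by \eqref{eq:S-maps} the map $\opS_{1 2 3}$ is the conjugate of $(\braid_{V, V})_{1 2}$ by $(\braid_{V, V^*})_{2 3}$, this eigenspace is carried by the conjugation to the $q^{(\omega_s, \omega_s)}$-eigenspace of $\braid_{V, V}$; by the Hecke relation \eqref{eq:hecke-relation} this is precisely the subspace cut out by the relations \eqref{eq:universal-calcP} (the FODC relations \eqref{eq:calculus-relations}, which on $\del p \otimes \del p$ tie $\opSt_{2 3 4}$ to $\opS_{1 2 3}$, being what lets a single eigenspace capture both lines of \eqref{eq:universal-calcP}). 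This identifies $\mathrm{im}(\id + \sigmaPP)$ with $\ker(\wedge)$ on the holomorphic block, and a rank check ($r(r+1)/2$ on both sides) confirms it. The antiholomorphic block is entirely symmetric, with $\sigmaMM$, $\opSt_{2 3 4}$, the conjugate of $(\braid_{V^*, V^*})_{3 4}$, and the relations \eqref{eq:universal-calcM} playing the corresponding roles.

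The mixed block is where the main difficulty lies. Since $\sigmaPM \circ \sigmaMP = \sigmaMP \circ \sigmaPM = \id$ by \cref{prop:quadratic-relations}, a short linear-algebra computation shows that $\mathrm{im}(\id + \sigma)$ restricted to the mixed block is the graph $\{\omega + \sigmaPM(\omega) : \omega \in \calcP \otimes_\subalg \calcM\}$, whose dimension equals the rank of $\calcP \otimes_\subalg \calcM$. The crux is to show that this graph lies in $\ker(\wedge)$, that is $\wedge(\omega + \sigmaPM(\omega)) = 0$; equivalently, that the element $\del p \otimes \delbar p + \sigmaPM(\del p \otimes \delbar p)$ reproduces the mixed relation \eqref{eq:universal-mixed}. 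The obstacle is a genuine reconciliation: $\sigmaPM$ involves the operator $\opS_{1 2 3} \opSt_{2 3 4}^{-1}$ together with the correction term $p \metMP p$, whereas \eqref{eq:universal-mixed} is written using $\opT_{1 2 3 4}^{-1} = \opS_{1 2 3}^{-1} \opSt_{2 3 4}^{-1}$ and an evaluation term, and matching them requires the quadratic relations \eqref{eq:S-quadratic-relations} for $\opS$ and $\opSt$ together with the equivalent forms of the mixed relation assembled in \cref{lem:degree-two-relations}. Once this inclusion is in place, equality follows by a dimension count: $\wedge$ maps the mixed block onto $\calc^{(1, 1)}$, whose rank coincides with that of $\calcP \otimes_\subalg \calcM$ by \cite{heko}, so $\ker(\wedge)$ on the mixed block has the same dimension as the graph above and must equal it. Assembling the three blocks yields $\mathrm{im}(\id + \sigma) = \ker(\wedge)$, and hence the theorem.
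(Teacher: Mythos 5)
Your overall strategy coincides with the paper's: reduce the theorem to the sub-bimodule identity $\mathrm{im}(\id + \sigma) = \ker(\wedge)$ inside $\calc \otimes_\subalg \calc$ (both defining ideals being generated by their degree-two parts, which are sub-bimodules since $\sigma$ and $\wedge$ are bimodule maps), then compare block by block. Your treatment of the two diagonal blocks is essentially the paper's "Relations $\del p$ and $\delbar p$" step in eigenspace language: by \cref{prop:quadratic-relations} the operator $\id + \sigmaPP$ has image equal to the $q^{(\alpha_s, \alpha_s)}$-eigenspace of $\sigmaPP$, this eigenspace is the left $\subalg$-span of $(\opS_{1 2 3} + q^{(\omega_s, \omega_s) - (\alpha_s, \alpha_s)}) \del p \otimes \del p$, i.e.\ of the first line of \eqref{eq:universal-calcP}, while the second line is identically zero in $\talg(\calc)$ by \eqref{eq:tensor-algebra-identities}; similarly for $\sigmaMM$. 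Your observation that on the mixed block $\mathrm{im}(\id + \sigma)$ is exactly the graph of $\sigmaPM$ (because $\sigmaMP = \sigmaPM^{-1}$) is a genuinely nice structural point which the paper never states explicitly.

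There is, however, a genuine gap in your mixed block, at the step "equality follows by a dimension count". An inclusion of modules of equal rank over an algebra need not be an equality --- already $x\,\bbC[x] \subsetneq \bbC[x]$ is an inclusion of free modules of rank one --- so from "graph $\subseteq \ker(\wedge)$ and both have rank $r^2$" you cannot conclude that they coincide. To make a fiber-dimension count legitimate you would have to note that both the graph and $\ker(\wedge)$ are \emph{covariant} sub-bimodules and pass through Takeuchi's equivalence, so that the comparison happens between subspaces of a finite-dimensional fiber; the paper only alludes to this machinery in a remark and deliberately keeps it out of the proof. What the paper does instead --- and what is also the computation your proposal defers --- is to show by direct calculation (using \eqref{eq:S-quadratic-relations}, \cref{lem:evaluation-T} and the right-module relations \eqref{eq:right-module-relations}) that the Heckenberger--Kolb mixed relation \eqref{eq:universal-mixed} is \emph{literally} the element $(\id + \sigma)(\delbar p \otimes \del p)$. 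Note that this one identity, combined with your own graph observation, yields both inclusions at once: it places the generators of $\ker(\wedge)$ on the mixed block inside $\mathrm{im}(\id + \sigma)$, and since the graph is the left $\subalg$-span of these same elements, it also places $\mathrm{im}(\id + \sigma)$ inside $\ker(\wedge)$ --- no dimension count is needed, and even the paper's separate "no further relations" computation of $\wedge\bigl((\id + \sigma)(\del p \otimes \delbar p)\bigr) = 0$ via \cref{lem:degree-two-relations} becomes redundant. So the repair is to replace the rank argument by that explicit identity; merely naming the tools (the quadratic relations and \cref{lem:degree-two-relations}), as your proposal does, leaves the heart of the proof undone.
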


\begin{proof}
Since $\calc^\bullet$ is the universal differential calculus corresponding to the FODC $\Omega$, we need to show that the relations \eqref{eq:universal-calcP}, \eqref{eq:universal-calcM} and \eqref{eq:universal-mixed} arise by acting with $\id + \sigma$ on $\calc \otimes_\subalg \calc$ (and quotienting by $\wedge$). Moreover, no further relations should arise this way.

\textit{Relations $\del p$ and $\delbar p$.}
Applying $(\braid_{V, V^*})_{2 3}$ to \eqref{eq:universal-calcP} and \eqref{eq:universal-calcM} we rewrite them as
\[
\begin{gathered}
(\opS_{1 2 3} + q^{(\omega_s, \omega_s) - (\alpha_s, \alpha_s)}) \del p \wedge \del p = 0, \quad
(\opSt_{2 3 4}^{-1} - q^{(\omega_s, \omega_s)}) \del p \wedge \del p = 0, \\
(\opS_{1 2 3} - q^{(\omega_s, \omega_s)}) \delbar p \wedge \delbar p = 0, \quad
(\opSt_{2 3 4}^{-1} + q^{(\omega_s, \omega_s) - (\alpha_s, \alpha_s)}) \delbar p \wedge \delbar p = 0.
\end{gathered}
\]
It follows from \eqref{eq:tensor-algebra-identities} that the second and third relations are identically satisfied in the tensor algebra $\talg(\calc)$.
Therefore consider the elements
\[
(\opS_{1 2 3} + q^{(\omega_s, \omega_s) - (\alpha_s, \alpha_s)}) \del p \otimes \del p, \quad
(\opSt_{2 3 4}^{-1} + q^{(\omega_s, \omega_s) - (\alpha_s, \alpha_s)}) \delbar p \otimes \delbar p.
\]
It is easy to check that these correspond (up to a scalar) with
\[
(\sigmaPP + \id)(\del p \otimes \del p), \quad
(\sigmaMM + \id)(\delbar p \otimes \delbar p).
\]
Indeed, these follow from the expressions $\sigmaPP(\del p \otimes \del p) = q^{(\alpha_s, \alpha_s) - (\omega_s, \omega_s)} \opS_{1 2 3} \del p \otimes \del p$ and $\sigmaMM(\delbar p \otimes \delbar p) = q^{(\omega_s, \omega_s) - (\alpha_s, \alpha_s)} \opSt_{2 3 4} \delbar p \otimes \delbar p$.

\textit{Mixed relations.}
Corresponding to the relation \eqref{eq:universal-mixed} we consider the element
\[
A = \delbar p \otimes \del p + q^{-(\alpha_s, \alpha_s)} \opT_{1 2 3 4}^{-1} \del p \otimes \delbar p - q^{-(\alpha_s, \alpha_s)} q^{-(\omega_s, 2 \rho)} \EVp_{3 4} \opT^{-1}_{3 4 5 6} p \del p \otimes \delbar p.
\]
We write $A_1 = \opT_{1 2 3 4}^{-1} \del p \otimes \delbar p$. Taking into account that $\opT_{1 2 3 4}^{-1} = \opS_{1 2 3}^{-1} \opSt_{2 3 4}^{-1}$ and using the quadratic relation \eqref{eq:S-quadratic-relations} for $\opSt_{2 3 4}^{-1}$, we rewrite this term as
\[
A_1 = q^{2 (\omega_s, \omega_s) - (\alpha_s, \alpha_s)} \opS_{1 2 3}^{-1} \opSt_{2 3 4} \del p \otimes \delbar p + (1 - q^{-(\alpha_s, \alpha_s)}) q^{(\omega_s, \omega_s)} \opS_{1 2 3}^{-1} \del p \otimes \delbar p.
\]
On the other hand consider $A_2 = \EVp_{3 4} \opT^{-1}_{3 4 5 6} p \del p \otimes \delbar p$. Using \cref{lem:evaluation-T} we get
\[
A_2 = - (1 - q^{(\alpha_s, \alpha_s)}) q^{(\omega_s, 2 \rho)} \EV_{4 5} p \del p \otimes \delbar p + (1 - q^{(\alpha_s, \alpha_s)}) p \metPM p.
\]
Moreover, using \eqref{eq:right-module-relations} and \eqref{eq:evaluations-calculi} we rewrite
\[
\EV_{4 5} p \del p \otimes \delbar p = q^{(\omega_s, \omega_s) - (\alpha_s, \alpha_s)} \EV_{4 5} \opS_{1 2 3}^{-1} \del p p \otimes \delbar p
= q^{(\omega_s, \omega_s) - (\alpha_s, \alpha_s)} \opS_{1 2 3}^{-1} \del p \otimes \delbar p.
\]
Taking these identities into account we obtain
\[
\begin{split}
A & = \delbar p \otimes \del p + q^{-(\alpha_s, \alpha_s)} A_1 - q^{-(\alpha_s, \alpha_s)} q^{-(\omega_s, 2 \rho)} A_2 \\
& = \delbar p \otimes \del p + q^{2 (\omega_s, \omega_s) - 2 (\alpha_s, \alpha_s)} \opS_{1 2 3}^{-1} \opSt_{2 3 4} \del p \otimes \delbar p + (1 - q^{-(\alpha_s, \alpha_s)}) q^{-(\omega_s, 2 \rho)} p \metPM p \\
& = (\id + \sigma)(\delbar p \otimes \del p).
\end{split}
\]

\textit{No further relations.}
Finally we show that no additional relations arise by acting with $\id + \sigma$ on $\calcP \otimes_\subalg \calcM$.
Considering the element $B = (\id + \sigma)(\del p \otimes \delbar p)$, our goal is to show that $\wedge(B) = 0$, which means that this relation is already satisfied in $\calc^\bullet$.

From the proof of \cite[Lemma 8.2]{paper-projective} we have the alternative expression
\[
\begin{split}
\sigmaPM(\del p \otimes \delbar p) & = q^{(\alpha_s, \alpha_s)} \opT_{1 2 3 4} \delbar p \otimes \del p
- (1 - q^{(\alpha_s, \alpha_s)}) q^{(\alpha_s, \alpha_s)} \EV_{2 3} \opT_{1 2 3 4} \delbar p \otimes \del p p \\
& + (1 - q^{(\alpha_s, \alpha_s)}) q^{(\alpha_s, \alpha_s)} q^{-(\omega_s, 2 \rho)} p \metMP p.
\end{split}
\]
Using the identity from \cref{lem:evaluation-T} this can be rewritten as
\[
\sigmaPM(\del p \otimes \delbar p) = q^{(\alpha_s, \alpha_s)} \opT_{1 2 3 4} \delbar p \otimes \del p
- q^{(\alpha_s, \alpha_s)} q^{-(\omega_s, 2 \rho)} \EVp_{3 4} \opT_{1 2 3 4} \delbar p \otimes \del p p.
\]
Plugging this into $B = (\id + \sigma)(\del p \otimes \delbar p)$ we obtain
\[
\wedge(B) = \del p \wedge \delbar p + q^{(\alpha_s, \alpha_s)} \opT_{1 2 3 4} \delbar p \wedge \del p
- q^{(\alpha_s, \alpha_s)} q^{-(\omega_s, 2 \rho)} \EVp_{3 4} \opT_{1 2 3 4} \delbar p \wedge \del p p.
\]
But we have $\wedge(B) = 0$, as shown in \cref{lem:degree-two-relations}.
\end{proof}

This presentation immediately leads to the following property.

\begin{corollary}
\label{cor:compatibility-wedge}
We have $\wedge \circ (\id + \sigma) = 0$.
\end{corollary}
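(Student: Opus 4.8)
The plan is to read this identity directly off the presentation established in \cref{thm:quotient-symmetrizer}, so that essentially no new computation is required. The wedge product $\wedge: \calc \otimes_\subalg \calc \to \calc^2$ is by definition the multiplication of the graded algebra $\calc^\bullet$ restricted to its degree-one component, so its image is precisely the degree-two part $\calc^2$. Under the presentation $\calc^\bullet = \talg(\calc)/\langle \mathrm{im}(\id + \sigma) \rangle$, this degree-two part is the quotient of $\calc \otimes_\subalg \calc$ by the degree-two component of the ideal $\langle \mathrm{im}(\id + \sigma) \rangle$.

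First I would observe that, since the ideal is generated by the subspace $\mathrm{im}(\id + \sigma) \subset \calc \otimes_\subalg \calc$ sitting entirely in degree two, its degree-two component is exactly $\mathrm{im}(\id + \sigma)$ itself: the defining relations are homogeneous of degree two, and there is no room for contributions from multiplying generators of lower degree. Hence one identifies $\calc^2 = (\calc \otimes_\subalg \calc)/\mathrm{im}(\id + \sigma)$, and under this identification the wedge product $\wedge$ is nothing but the canonical quotient projection onto this space.

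The conclusion is then immediate, since a quotient projection annihilates precisely the subspace by which one quotients; thus $\wedge$ vanishes on $\mathrm{im}(\id + \sigma)$, which is exactly the assertion $\wedge \circ (\id + \sigma) = 0$. I do not expect any genuine obstacle here: the only point requiring a moment of care is the identification of the degree-two part of the quotient algebra with $(\calc \otimes_\subalg \calc)/\mathrm{im}(\id + \sigma)$, and this follows directly from the grading of the tensor algebra together with the homogeneity of the relations. The substance of the corollary lies entirely in \cref{thm:quotient-symmetrizer}, of which it is a formal consequence.
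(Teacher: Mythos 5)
Your proposal is correct and is essentially the paper's own argument: the corollary is stated there as an immediate formal consequence of \cref{thm:quotient-symmetrizer}, with $\wedge$ being the quotient multiplication that kills the ideal $\langle \mathrm{im}(\id + \sigma)\rangle$, hence $\mathrm{im}(\id+\sigma)$ in degree two. The only point worth noting is that your identification of the degree-two part of the ideal with $\mathrm{im}(\id+\sigma)$ itself tacitly uses that $\id+\sigma$ is a $\subalg$-bimodule map (so its image absorbs multiplication by degree-zero elements), but the corollary only needs the trivial containment $\mathrm{im}(\id+\sigma) \subseteq \langle \mathrm{im}(\id+\sigma)\rangle$, so nothing is at stake.
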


\begin{remark}
The property $\wedge \circ (\id + \sigma) = 0$ is called \emph{torsion compatibility} in \cite[Section 8.1]{quantum-book}, in the context of a bimodule connection with generalized braiding $\sigma$.
When this holds, the strong form of metric compatibility (requiring a bimodule connection) implies the weak form (vanishing cotorsion), see \cite{paper-projective} for a discussion of these properties.
\end{remark}

\section{Additional results}
\label{sec:additional-results}

In this section we derive various additional results within our geometrical setting for quantum projective spaces.
We introduce a one-parameter family of maps $\splitmap: \calc^2 \to \calc \otimes_\subalg \calc$ splitting the wedge product, which we are going to use later to define the Ricci tensor.
We compute the quantum metric dimension of the differential calculus, defined as the composition of the inverse metric $\imet$ and the quantum metric $\met$.
Finally we prove a symmetry property for the inverse metric, which involves the generalized braiding $\sigma$.

\subsection{Splitting map}

We look for a splitting of the wedge product $\wedge: \calc \otimes_\subalg \calc \to \calc^2$, that is a $\subalg$-bimodule map $\splitmap: \calc^2 \to \calc \otimes_\subalg \calc$ such that $\wedge \circ \splitmap = \id$.
It is technically more convenient to first define $\splitmap$ as a map $\calc \otimes_\subalg \calc \to \calc \otimes_\subalg \calc$, and then ask that it descends to the space of two-forms $\calc^2$.
In the classical case this map is given by antisymmetrization, that is $x \otimes y \mapsto \frac{1}{2} (x \otimes y - y \otimes x)$.
Keeping this in mind, we look for a map $\splitmap$ which is a linear combination of $\id$ and the generalized braiding $\sigma$ in each component $\calc_a \otimes \calc_b$.
More concretely, we assume that
\[
\splitmap |_{\calc_a \otimes \calc_b} = c_{a b} \id - c_{a b}^\prime \sigma_{a b}, \quad
a, b \in \{ +, - \}.
\]
Note that any such $\splitmap$ is a $\subalg$-bimodule map, since this is true for each component $\sigma_{a b}$ of the generalized braiding.
Within this setting, we have the following result.

\begin{proposition}
\label{prop:split-map}
Let $\splitmap: \calc \otimes_\subalg \calc \to \calc \otimes_\subalg \calc$ be as above.
Suppose that:
\begin{enumerate}
\item it descends to a map $\calc^2 \to \calc \otimes_\subalg \calc$,
\item we have $\wedge \circ \splitmap = \wedge$.
\end{enumerate}
Then it must be of the form
\[
\begin{split}
\splitmap(\del p \otimes \del p) & = \frac{1}{1 + q^{-(\alpha_s, \alpha_s)}} (\id - q^{-(\alpha_s, \alpha_s)} \sigmaPP) (\del p \otimes \del p), \\
\splitmap(\delbar p \otimes \delbar p) & = \frac{1}{1 + q^{(\alpha_s, \alpha_s)}} (\id - q^{(\alpha_s, \alpha_s)} \sigmaPP) (\delbar p \otimes \delbar p), \\
\splitmap(\del p \otimes \delbar p) & = (c_{+ -} \id - c_{- +} \sigmaPM) (\del p \otimes \delbar p), \\
\splitmap(\delbar p \otimes \del p) & = (c_{- +} \id - c_{+ -} \sigmaMP) (\delbar p \otimes \del p).
\end{split}
\]
In addition we must have $c_{+ -} + c_{- +} = 1$.
\end{proposition}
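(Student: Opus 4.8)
The plan is to reformulate the two hypotheses as purely algebraic conditions on the eight coefficients $c_{ab}, c_{ab}^\prime$ (for $a,b \in \{+,-\}$), and then solve the resulting linear system block by block. The crucial input is \cref{thm:quotient-symmetrizer}: in degree two it identifies $\calc^2$ with $(\calc \otimes_\subalg \calc)/\langle \mathrm{im}(\id + \sigma)\rangle$, with $\wedge$ the quotient map, so that $\ker \wedge = \mathrm{im}(\id + \sigma)$. Hence hypothesis (1), namely that $\splitmap$ descends to a map $\calc^2 \to \calc \otimes_\subalg \calc$, is equivalent to the single identity $\splitmap \circ (\id + \sigma) = 0$. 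For hypothesis (2), I would use \cref{cor:compatibility-wedge}, which gives $\wedge \circ \sigma = -\wedge$; substituting the Ansatz yields $\wedge \circ \splitmap|_{\calc_a \otimes \calc_b} = (c_{ab} + c_{ab}^\prime)\, \wedge|_{\calc_a \otimes \calc_b}$, so that on each component where $\wedge$ is non-trivial the condition $\wedge \circ \splitmap = \wedge$ forces $c_{ab} + c_{ab}^\prime = 1$.

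Since $\sigma$ respects the decomposition into the three blocks $\calcP \otimes \calcP$, $\calcM \otimes \calcM$ and $\calcP \otimes \calcM \oplus \calcM \otimes \calcP$, I would analyse $\splitmap \circ (\id + \sigma) = 0$ one block at a time. For the holomorphic block, $\sigmaPP$ preserves $\calcP \otimes \calcP$, and using the Hecke-type quadratic relation for $\sigmaPP$ from \cref{prop:quadratic-relations} a direct expansion gives $\splitmap \circ (\id + \sigmaPP) = (c_{++} - q^{(\alpha_s,\alpha_s)} c_{++}^\prime)(\id + \sigmaPP)$. Because $\sigmaPP$ has eigenvalue $q^{(\alpha_s,\alpha_s)} \neq -1$, the operator $\id + \sigmaPP$ does not vanish identically, so vanishing of the composite forces the scalar to vanish: $c_{++} = q^{(\alpha_s,\alpha_s)} c_{++}^\prime$. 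Combined with $c_{++} + c_{++}^\prime = 1$ this determines $c_{++}$ and $c_{++}^\prime$ uniquely, and a short rearrangement puts $\splitmap|_{\calcP \otimes \calcP}$ in the stated form. The antiholomorphic block is identical with $\sigmaPP$ replaced by $\sigmaMM$ and $q^{(\alpha_s,\alpha_s)}$ by $q^{-(\alpha_s,\alpha_s)}$.

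For the mixed block I would use that $\sigmaPM$ and $\sigmaMP$ interchange the two summands and are mutually inverse, by \cref{prop:quadratic-relations}. For $x \in \calcP \otimes \calcM$ one computes $\splitmap\big((\id + \sigma)(x)\big) = (c_{+-} - c_{-+}^\prime)\, x + (c_{-+} - c_{+-}^\prime)\, \sigmaPM(x)$, and since $x$ and $\sigmaPM(x)$ lie in the distinct summands $\calcP \otimes \calcM$ and $\calcM \otimes \calcP$, both coefficients must vanish, giving $c_{+-}^\prime = c_{-+}$ and $c_{-+}^\prime = c_{+-}$ (starting instead from $\calcM \otimes \calcP$ yields the same relations). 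Substituting these back reproduces exactly the displayed expressions for $\splitmap(\del p \otimes \delbar p)$ and $\splitmap(\delbar p \otimes \del p)$, while hypothesis (2) on either mixed component contributes the final constraint $c_{+-} + c_{-+} = 1$; unlike the (anti)holomorphic blocks, this leaves one free parameter, which is the source of the one-parameter family.

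The main obstacle is the translation of hypothesis (1) into $\splitmap \circ (\id + \sigma) = 0$: everything downstream is elementary linear algebra once $\ker \wedge$ is under control, and it is precisely the new presentation of \cref{thm:quotient-symmetrizer} that pins down $\ker \wedge = \mathrm{im}(\id + \sigma)$. A secondary point requiring care is the passage from ``the composite operator vanishes'' to ``the scalar coefficient vanishes'' in the (anti)holomorphic blocks, which relies on the relevant $q$-eigenspace of $\sigma_{aa}$ being non-zero, so that $\id + \sigma_{aa}$ does not vanish identically.
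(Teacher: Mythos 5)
Your proposal is correct and follows essentially the same route as the paper's proof: hypothesis (1) is converted into $\splitmap \circ (\id + \sigma) = 0$ via \cref{thm:quotient-symmetrizer}, the quadratic relations of \cref{prop:quadratic-relations} turn this into linear constraints on the coefficients block by block, and hypothesis (2) together with $\wedge \circ \sigma = -\wedge$ from \cref{cor:compatibility-wedge} fixes the normalizations. The only differences are cosmetic: you impose (2) before (1), and you make explicit the non-degeneracy points (that $\id + \sigma_{a a}$ is not identically zero and that the two mixed summands are independent) which the paper's proof uses implicitly.
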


\begin{proof}
We are going to show that requiring the conditions (1) and (2) to hold for the map $\splitmap |_{\calc_a \otimes \calc_b} = c_{a b} \id - c_{a b}^\prime \sigma_{a b}$ fixes all the coefficients $c_{a b}$ but one.

(1) According to \cref{thm:quotient-symmetrizer}, the differential calculus $\calc^\bullet$ is the quotient of the tensor algebra $\talg(\calc)$ by the ideal generated by $\mathrm{im}(\id + \sigma)$.
Hence to show that $\splitmap$ descends to a map $\calc^2 \to \calc \otimes_\subalg \calc$ it suffices to show that $\splitmap \circ (\id + \sigma) = 0$.
We can determine when this holds using the quadratic relations for the components $\sigma_{a b}$ from \cref{prop:quadratic-relations}.

First, using $\sigmaPP^2 = q^{(\alpha_s, \alpha_s)} - (1 - q^{(\alpha_s, \alpha_s)}) \sigmaPP$ we compute
\[
\splitmap (\id + \sigma) (\del p \otimes \del p) = (c_{+ +} - q^{(\alpha_s, \alpha_s)} c_{+ +}^\prime) (\id + \sigmaPP) (\del p \otimes \del p).
\]
For this to vanish we must have $c_{+ +}^\prime = q^{-(\alpha_s, \alpha_s)} c_{+ +}$.

Similarly, using $\sigmaMM^2 = q^{-(\alpha_s, \alpha_s)} - (1 - q^{-(\alpha_s, \alpha_s)}) \sigmaMM$ we compute
\[
\splitmap (\id + \sigma) (\delbar p \otimes \delbar p) = (c_{- -} - q^{-(\alpha_s, \alpha_s)} c_{- -}^\prime) (\id + \sigmaMM) (\delbar p \otimes \delbar p).
\]
For this to vanish we must have $c_{- -}^\prime = q^{(\alpha_s, \alpha_s)} c_{- -}$.

Next, for the mixed case of $\calcP \otimes \calcM$ we compute
\[
\begin{split}
\splitmap (\id + \sigma) (\del p \otimes \delbar p) & = (c_{+ -} \id - c_{+ -}^\prime \sigmaPM) (\del p \otimes \delbar p) + (c_{- +} \id - c_{- +}^\prime \sigmaMP) \sigmaPM (\del p \otimes \delbar p) \\
& = (c_{+ -} - c_{- +}^\prime) (\del p \otimes \delbar p) + (c_{- +} - c_{+ -}^\prime) \sigmaPM (\del p \otimes \delbar p).
\end{split}
\]
For this to vanish we must have $c_{- +}^\prime = c_{+ -}$ and $c_{+ -}^\prime = c_{- +}$.
The computation for the case of $\calcM \otimes \calcP$ is very similar and leads to the same conditions.

Hence, after imposing the condition (1), the map $\splitmap$ looks as follows
\[
\begin{split}
\splitmap(\del p \otimes \del p) & = c_{+ +} (\id - q^{-(\alpha_s, \alpha_s)} \sigmaPP) (\del p \otimes \del p), \\
\splitmap(\delbar p \otimes \delbar p) & = c_{- -} (\id - q^{(\alpha_s, \alpha_s)} \sigmaPP) (\delbar p \otimes \delbar p), \\
\splitmap(\del p \otimes \delbar p) & = (c_{+ -} \id - c_{- +} \sigmaPM) (\del p \otimes \delbar p), \\
\splitmap(\delbar p \otimes \del p) & = (c_{- +} \id - c_{+ -} \sigmaMP) (\delbar p \otimes \del p).
\end{split}
\]

(2) Now we require the condition $\wedge \circ \splitmap = \wedge$.
To check this condition we can use the property $\wedge \circ \sigma = - \wedge$ from \cref{cor:compatibility-wedge}.
For $\del p \otimes \del p$ we compute
\[
\wedge \circ \splitmap(\del p \otimes \del p) = c_{+ +} (1 + q^{-(\alpha_s, \alpha_s)}) \del p \wedge \del p.
\]
Hence we must have $c_{+ +} = (1 + q^{-(\alpha_s, \alpha_s)})^{-1}$.
A similar computation for $\delbar p \otimes \delbar p$ leads to $c_{- -} = (1 + q^{(\alpha_s, \alpha_s)})^{-1}$.
For the mixed case it leads to $c_{+ -} + c_{- +} = 1$.

We conclude that the conditions (1) and (2) fix all but one of the coefficients $c_{a b}$ and $c_{a b}^\prime$.
Using the explicit relations we obtain the map as in the statement.
\end{proof}

It follows that we have a well-defined map $\splitmap: \calc^2 \to \calc \otimes_\subalg \calc$ (denoted by the same symbol) such that $\wedge \circ \splitmap = \id$.
This leads to the following result.

\begin{corollary}
We have a split exact sequence
\[
0 \to \ker(\wedge) \to \calc \otimes_\subalg \calc \xrightarrow{\wedge} \calc^2 \to 0.
\]
\end{corollary}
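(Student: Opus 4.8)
The final corollary asserts that the short sequence
\[
0 \to \ker(\wedge) \to \calc \otimes_\subalg \calc \xrightarrow{\wedge} \calc^2 \to 0
\]
is split exact. Let me sketch how I would prove this.

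The plan is to read off everything from the splitting map $\splitmap$ constructed immediately before the corollary. The key input is that, by \cref{prop:split-map} together with the remark following it, we have a well-defined $\subalg$-bimodule map $\splitmap \colon \calc^2 \to \calc \otimes_\subalg \calc$ satisfying $\wedge \circ \splitmap = \id_{\calc^2}$. This single identity does essentially all the work.

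First I would establish exactness at the three spots. Exactness at $\calc^2$ is the statement that $\wedge$ is surjective, which is immediate since $\wedge$ is the defining product map onto $\calc^2$ (equivalently, $\wedge \circ \splitmap = \id$ forces $\wedge$ to be surjective, as any $\eta \in \calc^2$ equals $\wedge(\splitmap(\eta))$). Exactness at $\calc \otimes_\subalg \calc$ is the statement that the image of the inclusion $\ker(\wedge) \hookrightarrow \calc \otimes_\subalg \calc$ equals the kernel of $\wedge$, which holds by definition of the maps. Exactness at the leftmost $\ker(\wedge)$ is just injectivity of the inclusion, again trivial. So the sequence is exact.

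It remains to show the sequence splits, and this is where $\splitmap$ is used essentially rather than formally. Because $\splitmap$ is a right inverse to $\wedge$, it provides a splitting of the surjection $\wedge$, so the sequence splits on the right; equivalently one obtains a direct-sum decomposition $\calc \otimes_\subalg \calc \cong \ker(\wedge) \oplus \mathrm{im}(\splitmap)$. Concretely I would define the idempotent $e := \splitmap \circ \wedge \colon \calc \otimes_\subalg \calc \to \calc \otimes_\subalg \calc$ and check $e^2 = \splitmap \circ (\wedge \circ \splitmap) \circ \wedge = \splitmap \circ \wedge = e$, using $\wedge \circ \splitmap = \id$. Then $\id - e$ is the complementary projection onto $\ker(\wedge)$: indeed $\wedge \circ (\id - e) = \wedge - (\wedge\circ\splitmap)\circ\wedge = \wedge - \wedge = 0$, so the image of $\id - e$ lies in $\ker(\wedge)$, while $\id - e$ fixes $\ker(\wedge)$ pointwise (if $\wedge(x)=0$ then $e(x) = \splitmap(\wedge(x)) = 0$). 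This exhibits the splitting and completes the proof.

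I do not anticipate a genuine obstacle here: the corollary is a formal consequence of the existence of a right inverse $\splitmap$ to $\wedge$, which was the real content established in \cref{prop:split-map}. The only point worth care is to note that $\splitmap$ and $\wedge$ are $\subalg$-bimodule maps, so the splitting is a splitting in the category of $\subalg$-bimodules (not merely of vector spaces); this is automatic since $\wedge$ is a bimodule map and $\splitmap$ was constructed as a bimodule map, hence $e$ and $\id - e$ are bimodule maps as well.
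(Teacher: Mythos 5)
Your proposal is correct and follows exactly the paper's argument: exactness is automatic for any differential calculus, and the splitting comes from the right inverse $\splitmap$ to $\wedge$ established in \cref{prop:split-map}. Your elaboration via the idempotent $e = \splitmap \circ \wedge$ and the remark that everything is a $\subalg$-bimodule map simply spells out details the paper leaves implicit.
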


\begin{proof}
This sequence is exact for any differential calculus.
Splitting is equivalent to the existence of a map $\splitmap: \calc^2 \to \calc \otimes_\subalg \calc$ such that $\wedge \circ \splitmap = \id$, which is what we obtained.
\end{proof}

\begin{remark}
The splitting of the wedge product is a natural condition to require for a differential calculus.
For instance, in \cite{koszul-formula} it is shown that it implies the existence of a torsion-free connection, under some further assumptions.
However, these additional assumptions are not satisfied by the differential calculus $\calc^\bullet$ we consider here.
\end{remark}

\subsection{Quantum metric dimension}

We are now going to compute the expression $\imet(\met)$, where $\met$ is the quantum metric and $\imet$ is the inverse metric.
In the classical setting it corresponds to the trace of the metric tensor $\met$, hence the dimension of the underlying space.
For this reason $\imet(\met)$ is called the \emph{quantum metric dimension} in \cite[Chapter 8]{quantum-book}.
A priori it is an element of the algebra $\subalg$, but in our case it turns out to be a scalar.
We are going to use the quantum metric dimension in the computation of the Ricci tensor.

First we recall the notion of \emph{quantum dimension} of a simple module $V$, given by
\[
\qdim(V) := \mathrm{Tr}_V(K_{2 \rho}) = \sum_i q^{(\lambda_i, 2 \rho)}.
\]
We can derive a more convenient expression for $\qdim(V)$ in our case.
We employ the standard notion of $q$-numbers, defined by $[x]_q := \frac{q^x - q^{-x}}{q - q^{-1}}$.

\begin{lemma}
\label{lem:quantum-dimension}
Consider the $U_q(\mathfrak{sl}_{r + 1})$-module $V = V(\omega_s)$ with $s = 1$ or $s = r$. Then
\[
\qdim(V) = [r + 1]_q = \frac{q^{(\omega_s, 2 \rho) + (\alpha_s, \alpha_s)/2} - q^{-(\omega_s, 2 \rho) - (\alpha_s, \alpha_s)/2}}{q^{(\alpha_s, \alpha_s)/2} - q^{-(\alpha_s, \alpha_s)/2}}.
\]
\end{lemma}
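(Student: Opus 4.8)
The plan is to evaluate the weight sum $\qdim(V) = \sum_i q^{(\lambda_i, 2\rho)}$ directly, using the standard realization of the root system of type $A_r$. First I would fix an orthonormal basis $\{\varepsilon_k\}_{k=1}^{r+1}$ of the ambient space of $\mathfrak{gl}_{r+1}$, with simple roots $\alpha_k = \varepsilon_k - \varepsilon_{k+1}$; this is compatible with the normalization $(\alpha_k, \alpha_k) = 2$, since all roots in type $A$ have the same length. Summing the positive roots $\varepsilon_i - \varepsilon_j$ over $1 \le i < j \le r+1$ gives $2\rho = \sum_{k=1}^{r+1}(r+2-2k)\varepsilon_k$. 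Note that these coefficients sum to zero, so $2\rho$ lies in the trace-zero hyperplane and therefore pairs with the genuine $\mathfrak{sl}_{r+1}$-weights exactly as with their lifts to $\varepsilon$-coordinates.

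Next I would identify the weights of $V = V(\omega_s)$. For $s = 1$ the module is the vector representation, whose weights are the projections $\bar\varepsilon_1, \dots, \bar\varepsilon_{r+1}$ of the $\varepsilon_k$, giving $(\bar\varepsilon_k, 2\rho) = r+2-2k$. Hence
\[
\qdim(V) = \sum_{k=1}^{r+1} q^{r+2-2k} = q^r + q^{r-2} + \cdots + q^{-r} = [r+1]_q,
\]
which is the first claimed equality. For $s = r$ the module is the dual of the vector representation, whose weights are the negatives of the above; since the multiset $\{r+2-2k : 1 \le k \le r+1\}$ is symmetric under negation, the same $q$-integer results, consistent with $\qdim(V) = \qdim(V^*)$.

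For the second equality I would check the two scalars appearing in the closed form. Since $(\alpha_s, \alpha_s) = 2$, the denominator equals $q - q^{-1}$. It then remains to verify $(\omega_s, 2\rho) = r$ in both cases: for $s = 1$ we have $\omega_1 = \bar\varepsilon_1$, so $(\omega_1, 2\rho) = r+2-2 = r$, while for $s = r$ we have $\omega_r = \bar\varepsilon_1 + \cdots + \bar\varepsilon_r = -\bar\varepsilon_{r+1}$, so $(\omega_r, 2\rho) = -\bigl(r+2-2(r+1)\bigr) = r$. Substituting $(\omega_s, 2\rho) = r$ and $(\alpha_s, \alpha_s)/2 = 1$ turns the numerator into $q^{r+1} - q^{-(r+1)}$, so the right-hand side is precisely $[r+1]_q = \frac{q^{r+1} - q^{-(r+1)}}{q - q^{-1}}$.

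The computation is entirely routine once the root-system conventions are fixed. The only point requiring a little care is the passage between the $\varepsilon$-coordinates of $\mathfrak{gl}_{r+1}$ and the genuine $\mathfrak{sl}_{r+1}$-weights, which is harmless here precisely because $2\rho$ is orthogonal to $\sum_k \varepsilon_k$; I expect this, together with treating the cases $s = 1$ and $s = r$ uniformly, to be the only mild obstacle.
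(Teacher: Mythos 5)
Your proof is correct, but it follows a genuinely different route from the paper. You evaluate the defining weight sum $\qdim(V) = \sum_i q^{(\lambda_i, 2\rho)}$ directly, by writing everything in $\varepsilon$-coordinates for type $A_r$, enumerating the weights of the vector representation (respectively its dual) and observing that $(\bar\varepsilon_k, 2\rho) = r+2-2k$, which makes the sum visibly equal to $[r+1]_q$; your care about the $\mathfrak{gl}_{r+1}$ versus $\mathfrak{sl}_{r+1}$ coordinates, justified by $2\rho$ lying in the trace-zero hyperplane, is exactly the right check, and your verification of $(\omega_s,2\rho)=r$ and $(\alpha_s,\alpha_s)=2$ settles the second equality. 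The paper instead invokes the quantum Weyl dimension formula $\qdim(V(\lambda)) = \prod_{\alpha>0} [(\lambda+\rho,\alpha)]_q/[(\rho,\alpha)]_q$, notes that only the positive roots $\alpha_{1j}$ pair nontrivially with $\omega_1$, and obtains $[r+1]_q$ from the telescoping product $\prod_{j=1}^r [j+1]_q/[j]_q$. Your argument is more elementary and self-contained (it needs nothing beyond the weight structure of the fundamental module, which is available since that module is minuscule), whereas the paper's argument is an instance of a general machine that would compute $\qdim(V(\lambda))$ for an arbitrary dominant weight $\lambda$ without any explicit knowledge of the weights; for the specific module at hand the two are equally short, and both treat $s=1$ and $s=r$ correctly.
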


\begin{proof}
For any simple $\Uqg$-module $V(\lambda)$, the quantum Weyl dimension formula gives
\[
\qdim(V(\lambda)) = \prod_{\alpha > 0} \frac{[(\lambda + \rho, \alpha)]_q}{[(\rho, \alpha)]_q}.
\]
Here the product is over the positive roots of $\lieg$.

We use the above formula with $\lieg = \mathfrak{sl}_{r + 1}$ and $\lambda = \omega_1$ (the case $\lambda = \omega_r$ is identical).
Let us enumerate the positive roots of $\mathfrak{sl}_{r + 1}$ by $\alpha_{i j} = \sum_{k = i}^j \alpha_k$ with $1 \leq i \leq j \leq r$, where $\{\alpha_i\}_{i = 1}^r$ are the simple roots. Then $(\omega_1, \alpha_{i j}) = 0$ for $i > 1$ and $(\omega_1, \alpha_{1 j}) = 1$.
Moreover we have $(\rho, \alpha_{1 j}) = j$, since $\rho$ is the sum of all fundamental weights. Then we obtain
\[
\qdim(V) = \prod_{j = 1}^r \frac{[(\omega_1 + \rho, \alpha_{1 j})]_q}{[(\rho, \alpha_{1 j})]_q}
= \prod_{j = 1}^r \frac{[j + 1]_q}{[j]_q} = [r + 1]_q.
\]
The second expression for $\qdim(V)$ follows easily from this one, using the fact that for $\mathfrak{sl}_{r + 1}$ we have the values $(\alpha_s, \alpha_s) = 2$ and $(\omega_s, 2 \rho) = r$.
\end{proof}

We now compute the quantum metric dimension of the differential calculus $\calc^\bullet$.

\begin{proposition}
\label{prop:quantum-trace}
For the quantum metric $\met = \metPM + \metMP$ we have
\[
\begin{split}
\imet(\metPM) & = q^{-(\omega_s, 2 \rho)} \qdim(V) - 1, \\
\imet(\metMP) & = q^{(\omega_s, 2 \rho)} \qdim(V) - 1.
\end{split}
\]
Moreover the quantum metric dimension can be written as
\[
\imet(\met) = (q^{r + 1} + q^{-(r + 1)}) [r]_q.
\]
\end{proposition}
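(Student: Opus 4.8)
The plan is to compute $\imet(\metPM)$ and $\imet(\metMP)$ directly from the explicit formulae for the inverse metric in \eqref{eq:inverse-metric}, then assemble $\imet(\met)$ and rewrite it in terms of $q$-numbers using \cref{lem:quantum-dimension}. First I would apply $\imet$ to $\metPM = \EVp_{1 2} \EV_{2 3} \del p \otimes \delbar p$. Since $\imet$ is a $\subalg$-bimodule map and the evaluation morphisms $\EVp_{1 2}, \EV_{2 3}$ act on the algebra legs, the computation reduces to evaluating $\imet$ on the pairing $(\del p, \delbar p)$ and then contracting with the leftover matrix-coefficient legs via $\EVp_{1 2}\EV_{2 3}$. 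I would substitute the formula $(\del p, \delbar p) = q^{(\alpha_s, \alpha_s)} q^{-(\omega_s, \omega_s + 2 \rho)} \opS_{1 2 3} \CV_3 p - q^{(\alpha_s, \alpha_s)} q^{-(\omega_s, 2 \rho)} p p$ and simplify the two resulting terms separately. The symmetric computation for $\metMP$ uses $(\delbar p, \del p) = \CVp_2 p - q^{-(\omega_s, 2 \rho)} p p$.

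The key technical inputs will be the categorical relations from \cref{sec:properties-S} and the duality/projection relations. For the $pp$ terms I expect to use the algebra relation $\EVp_{1 2} p = q^{(\omega_s, 2 \rho)}$ from \eqref{eq:algebra-relations} together with the projection property $\EV_{2 3} p p = p$ from \eqref{eq:projection-relation}, which should collapse each such term to a scalar (producing the $-1$ in each formula). For the coevaluation terms I would use the defining relations between $\CV, \CVp$ and the evaluations, specifically the duality relations \eqref{eq:duality} together with the quantum-dimension identity $\EVp \circ \CV = \qdim(V)$ or $\EV \circ \CVp = \qdim(V)$, suitably routed through the legs. The weight factors $q^{\pm(\omega_s, 2\rho)}$ accompanying $\qdim(V)$ should emerge from the $q^{(\lambda_i, 2\rho)}$ weights built into the right-handed evaluation $\EVp$ and the twisted coevaluation $\CVp$, combined with the overall scalar prefactors in \eqref{eq:inverse-metric}.

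Once $\imet(\metPM) = q^{-(\omega_s, 2 \rho)} \qdim(V) - 1$ and $\imet(\metMP) = q^{(\omega_s, 2 \rho)} \qdim(V) - 1$ are established, the final identity is a routine algebraic simplification. I would add the two expressions to get $\imet(\met) = (q^{(\omega_s, 2\rho)} + q^{-(\omega_s, 2\rho)}) \qdim(V) - 2$, then substitute $\qdim(V) = [r+1]_q$ and the $\mathfrak{sl}_{r+1}$ values $(\omega_s, 2\rho) = r$, $(\alpha_s, \alpha_s) = 2$ from \cref{lem:quantum-dimension}. The remaining work is the $q$-number identity $(q^r + q^{-r})[r+1]_q - 2 = (q^{r+1} + q^{-(r+1)})[r]_q$, which follows by expanding both sides via $[n]_q = (q^n - q^{-n})/(q - q^{-1})$ and matching terms.

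The main obstacle will be the careful leg-tracking in the coevaluation terms: correctly composing $\EVp_{1 2}\EV_{2 3}$ with $\opS_{1 2 3} \CV_3 p$ and with $\CVp_2 p$, moving the $\opS$-map past the evaluations using the relations in \cref{sec:properties-S} (such as \eqref{eq:S-evaluation}), and pinning down the exact power of $q$ produced when the quantum-trace factor $K_{2\rho}$ (encoded in $\EVp$ and $\CVp$) acts. Keeping the bookkeeping of these weight exponents consistent is where an error is most likely to creep in; everything else is bimodule-map formalism and elementary $q$-number manipulation.
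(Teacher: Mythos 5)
Your plan follows essentially the same route as the paper's proof: substitute the explicit inverse-metric formulas \eqref{eq:inverse-metric}, collapse the $p p$ terms via the projection relation \eqref{eq:projection-relation} and $\EVp_{1 2}\, p = q^{(\omega_s, 2 \rho)}$, treat the coevaluation terms with the quadratic relation \eqref{eq:S-quadratic-relations}, the evaluation identity \eqref{eq:S-evaluation}, duality and the quantum-dimension identities, and finish with the $q$-number manipulation $(q^{r} + q^{-r})[r+1]_q - 2 = (q^{r+1} + q^{-(r+1)})[r]_q$. The one bookkeeping caveat is that for $\imet(\metPM)$ the $p p$ term actually contributes $-q^{(\alpha_s, \alpha_s)}$ rather than $-1$, the $-1$ emerging only after cancellation against the scalar piece of the quadratic relation applied to the $\opS_{1 2 3} \CV_3\, p$ term --- precisely the kind of exponent-tracking you flagged as the likely source of error.
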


\begin{proof}
\textit{The term $(\cdot, \cdot)(\metPM)$.}
Using the inverse metric \eqref{eq:inverse-metric} we get
\[
\imet(\metPM) = q^{(\alpha_s, \alpha_s)} q^{-(\omega_s, \omega_s + 2 \rho)} \EVp_{1 2} \EV_{2 3} \opS_{1 2 3} \CV_3 p - q^{(\alpha_s, \alpha_s)} q^{-(\omega_s, 2 \rho)} \EVp_{1 2} \EV_{2 3} p p.
\]
For the second term we have $\EVp_{1 2} \EV_{2 3} p p = \EVp_{1 2} p = q^{(\omega_s, 2 \rho)}$.
For the first term, using the quadratic relation \eqref{eq:S-quadratic-relations} for $\opS_{1 2 3}$ and the duality relation from \eqref{eq:duality}, we compute
\[
\begin{split}
\EVp_{1 2} \EV_{2 3} \opS_{1 2 3} \CV_3 p
& = q^{2 (\omega_s, \omega_s) - (\alpha_s, \alpha_s)} \EVp_{1 2} \EV_{2 3} \opS_{1 2 3}^{-1} \CV_3 p + (1 - q^{-(\alpha_s, \alpha_s)}) q^{(\omega_s, \omega_s)} \EVp_{1 2} \EV_{2 3} \CV_3 p \\
& = q^{(\omega_s, \omega_s) - (\alpha_s, \alpha_s)} q^{-(\omega_s, 2 \rho)} \EVp_{1 2} \EVp_{1 2} \CV_3 p + (1 - q^{-(\alpha_s, \alpha_s)}) q^{(\omega_s, \omega_s)} \EVp_{1 2} p \\
& = q^{(\omega_s, \omega_s) - (\alpha_s, \alpha_s)} q^{-(\omega_s, 2 \rho)} \EVp_{1 2} \CV_1 \EVp_{1 2} p + (1 - q^{-(\alpha_s, \alpha_s)}) q^{(\omega_s, \omega_s + 2 \rho)} \\
& = q^{(\omega_s, \omega_s) - (\alpha_s, \alpha_s)} \EVp_{1 2} \CV_1 + (1 - q^{-(\alpha_s, \alpha_s)}) q^{(\omega_s, \omega_s + 2 \rho)}.
\end{split}
\]
The expression $\EVp_{1 2} \CV_1$ corresponds to the quantum dimension, since
\[
\EVp_{1 2} \CV_1 = \sum_i \EVp_{1 2}(v_i \otimes f^i) = \sum_i q^{(2 \rho, \lambda_i)} = \qdim(V).
\]
Therefore we obtain the result
\[
\begin{split}
\imet(\metPM) & = q^{-(\omega_s, 2 \rho)} \qdim(V) + (q^{(\alpha_s, \alpha_s)} - 1) - q^{(\alpha_s, \alpha_s)} \\
& = q^{-(\omega_s, 2 \rho)} \qdim(V) - 1.
\end{split}
\]

\textit{The term $(\cdot, \cdot)(\metMP)$.}
Similarly we have
\[
\imet(\metMP) = \EVp_{1 2} \EV_{2 3} \CVp_2 p - q^{-(\omega_s, 2 \rho)} \EVp_{1 2} \EV_{2 3} p p.
\]
We have $\EVp_{1 2} \EV_{2 3} p p = q^{(\omega_s, 2 \rho)}$, as above.
For the first term, using \cref{lem:categorical-identities} we get
\[
\EVp_{1 2} \EV_{2 3} \CVp_2 p = q^{(\omega_s, \omega_s + 2 \rho)} \EVp_{1 2} \EV_{2 3} (\braid_{V, V^*})_{2 3} \CV_2 p \\
= \EVp_{1 2} \EVp_{2 3} \CV_2 p.
\]
Using $\EVp_{2 3} \CV_2 = \qdim(V)$, as above, we find that
\[
\EVp_{1 2} \EV_{2 3} \CVp_2 p = \qdim(V) \EVp_{1 2} p = q^{(\omega_s, 2 \rho)} \qdim(V).
\]
Therefore we obtain the result
\[
\imet(\metMP) = q^{(\omega_s, 2 \rho)} \qdim(V) - 1.
\]

\textit{Quantum metric dimension.}
We have $\imet(\met) = (q^{(\omega_s, 2 \rho)} + q^{-(\omega_s, 2 \rho)}) \qdim(V) - 2$.
Using \cref{lem:quantum-dimension} and $(\omega_s, 2 \rho) = r$, plus the identity $[r + 1]_q = q [r]_q + q^{-r}$, we check that
\[
(q^{(\omega_s, 2 \rho)} + q^{-(\omega_s, 2 \rho)}) \qdim(V) - 2 = (q^{r + 1} + q^{-(r + 1)}) [r]_q.
\]
This gives an expression which is closer to the classical one.
\end{proof}

\begin{remark}
The quantum metric dimension reduces to $(\cdot, \cdot)(\met) = 2 r$ in the classical limit $q \to 1$, which coincides the dimension of $\bbC P^r$ as a real manifold.
This can also be checked using the general fact that $\qdim(V)$ reduces to $\dim(V)$ in the classical limit.
\end{remark}

In the following we are going to write the quantum metric dimension as
\[
\Trq(\met) := (\cdot, \cdot)(\met).
\]
We employ similar notations for $\metPM$ and $\metMP$.

\subsection{Symmetry inverse metric}

We have seen that the quantum metric $\met$ is \emph{symmetric}, in the sense that $\wedge(\met) = 0$.
What about the inverse metric $\imet$?
We now show that it satisfies a certain twisted symmetry which involves the generalized braiding $\sigma$.

\begin{proposition}
\label{prop:inverse-metric-symmetry}
The inverse metric $\imet: \calc \otimes_\subalg \calc \to \subalg$ satisfies
\[
\begin{split}
\imet \circ \sigma (\del p \otimes \delbar p) & = q^{(\alpha_s, \alpha_s)} q^{2 (\omega_s, 2 \rho)} \imet (\del p \otimes \delbar p), \\
\imet \circ \sigma (\delbar p \otimes \del p) & = q^{-(\alpha_s, \alpha_s)} q^{-2 (\omega_s, 2 \rho)} \imet (\delbar p \otimes \del p).
\end{split}
\]
\end{proposition}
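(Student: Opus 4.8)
The plan is to establish both identities by a direct computation, combining the explicit formula \eqref{eq:generalized-braiding} for the generalized braiding with the explicit inverse metric \eqref{eq:inverse-metric} and the scalar values of $\imet(\metPM)$ and $\imet(\metMP)$ obtained in \cref{prop:quantum-trace}. I would carry out the first identity in full and then obtain the second by an entirely analogous computation, in which the roles of $\del p$ and $\delbar p$ (equivalently of $V$ and $V^*$, and of $q$ and $q^{-1}$) are interchanged. Note that the two target scalars $q^{(\alpha_s, \alpha_s)} q^{2 (\omega_s, 2 \rho)}$ and $q^{-(\alpha_s, \alpha_s)} q^{-2 (\omega_s, 2 \rho)}$ are inverse to one another, which is consistent with this symmetry.

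First I would apply $\imet$ to $\sigma(\del p \otimes \delbar p) = \sigmaPM(\del p \otimes \delbar p)$. By \eqref{eq:generalized-braiding} this splits as a term proportional to $\imet(\opS_{1 2 3} \opSt_{2 3 4}^{-1} \delbar p \otimes \del p)$ plus a term proportional to $\imet(p \metMP p)$. The second term is handled immediately: since $\imet$ is a $\subalg$-bimodule map and $\imet(\metMP)$ is the scalar $q^{(\omega_s, 2 \rho)} \qdim(V) - 1$ from \cref{prop:quantum-trace}, we get $\imet(p \metMP p) = \imet(\metMP)\, p p$, a scalar multiple of $p p$.

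The hard part will be the first term, namely evaluating $\imet(\opS_{1 2 3} \opSt_{2 3 4}^{-1} \delbar p \otimes \del p)$ and rewriting it in terms of $\imet(\del p \otimes \delbar p)$. My plan is to proceed as in the computation of the term $A_1$ in the proof of \eqref{eq:metric-sigma}: I would use the Hecke-type quadratic relations \eqref{eq:S-quadratic-relations} to expand $\opS_{1 2 3} \opSt_{2 3 4}^{-1}$ into lower-order operators, and then commute the resulting $\opS, \opSt$ past the coevaluation maps $\CVp_2, \CV_3$ that define the inverse metric in \eqref{eq:inverse-metric}, using the relations collected in \cref{sec:properties-S} together with \eqref{eq:coevaluations}, while absorbing the factors of $p$ by means of the algebra relations \eqref{eq:algebra-relations}. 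The difference from the cited lemma is that the inverse metric contracts with coevaluation rather than evaluation maps, so I would need the appropriate coevaluation analogues of the simplifications used there; this bookkeeping, and in particular keeping exact track of all powers of $q$, is where I expect the real work to lie.

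Finally I would add the two contributions and verify that the total collapses to $q^{(\alpha_s, \alpha_s)} q^{2 (\omega_s, 2 \rho)}\, \imet(\del p \otimes \delbar p)$, checking the cancellation directly against the explicit value of $\imet(\del p \otimes \delbar p)$ in \eqref{eq:inverse-metric}; I expect the $\opS \opSt^{-1}$ contribution and the metric contribution to combine much as the terms $A_1$ and $A_2$ combined to give $\metMP$ in the proof of \eqref{eq:metric-sigma}. As an independent sanity check on the scalar factors, I would use the categorical picture from the remark following \cref{prop:quadratic-relations}: under the Takeuchi equivalence $\sigmaPM$ and $\sigmaMP$ correspond to $\braid_{V, V^*}^{-1}$ and $\braid_{V, V^*}$, so the twisted symmetry of $\imet$ should descend from the identities of \cref{lem:categorical-identities} relating $\EVp$ to $\EV \circ \braid_{V, V^*}$, whose scalar $q^{(\omega_s, \omega_s + 2 \rho)}$ gives a useful consistency check on the powers of $q$ that appear.
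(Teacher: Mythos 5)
Your proposal has the paper's skeleton: split $\imet \circ \sigma$ via \eqref{eq:generalized-braiding} into a braiding term and a metric term, reduce the metric term to $\Trq(\metMP)\, p p$ using centrality and \cref{prop:quantum-trace}, and at the end match the result against the explicit formula \eqref{eq:inverse-metric}, with the explicit value of $\qdim(V)$ from \cref{lem:quantum-dimension} absorbing the quantum-dimension contribution. Your prediction that the second identity follows by the mirror-image computation is also exactly what the paper does.

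The gap is in the one place where you commit to a concrete technique. Expanding $\opS_{1 2 3} \opSt_{2 3 4}^{-1}$ by the Hecke relations \eqref{eq:S-quadratic-relations}, in analogy with the $A_1$ computation in the proof of \eqref{eq:metric-sigma}, is not what the paper does and would not produce the needed simplification: no quadratic relations appear in this proof at all. The entire braiding term rests on the single identity
\[
\opSt_{2 3 4}^{-1} \CVp_2 = q^{(\omega_s, \omega_s + 2 \rho)} \CV_3,
\]
which is proved directly: by \cref{lem:categorical-identities} one has $\CVp_2 = q^{(\omega_s, \omega_s + 2 \rho)} (\braid_{V, V^*})_{2 3} \CV_2$, the trailing factor $(\braid_{V, V^*}^{-1})_{2 3}$ in the definition \eqref{eq:S-maps} of $\opSt_{2 3 4}^{-1}$ cancels this braiding, and the surviving composite $(\braid_{V, V^*})_{2 3} (\braid_{V^*, V^*})_{3 4} \CV_2$ is precisely the naturality combination in \eqref{eq:coevaluations}, hence equals $\CV_3$. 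This cancellation is special to $\opSt^{-1}$ acting on $\CVp$. If you Hecke-expand first, you instead face $\opSt_{2 3 4} \CVp_2$, where the same substitution leaves $(\braid_{V, V^*})_{2 3} (\braid_{V^*, V^*}^{-1})_{3 4} \CV_2 = (\braid_{V, V^*})_{2 3} (\braid_{V^*, V})_{2 3} \CV_3$: a double braiding on legs $2, 3$ that naturality alone cannot remove (it is not a scalar, since legs $2,3$ mix a spectator leg with half of the coevaluation), and whose evaluation requires the displayed identity anyway — so the expansion is circular. Once that identity is in hand, together with $\opS_{1 2 3} \opSt_{2 3 4}^{-1} p p = q^{2 (\omega_s, \omega_s)} p p$ from \eqref{eq:algebra-relations}, everything is expressed in the building blocks $\opS_{1 2 3} \CV_3 \, p$ and $p p$ of \eqref{eq:inverse-metric}, and the scalar bookkeeping you describe closes the argument. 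In short: you list the right tools (\cref{lem:categorical-identities} and \eqref{eq:coevaluations}), but the route you commit to for deploying them would stall, and the coevaluation identity above is the missing idea that carries the proof.
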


\begin{proof}
\textit{First identity.}
Using \eqref{eq:inverse-metric} and \cref{prop:quantum-trace} we obtain
\[
\begin{split}
\imet \sigma (\del p \otimes \delbar p)
& = q^{2 (\alpha_s, \alpha_s) - 2 (\omega_s, \omega_s)} \opS_{1 2 3} \opSt_{2 3 4}^{-1} (\delbar p, \del p) + (1 - q^{(\alpha_s, \alpha_s)}) q^{(\alpha_s, \alpha_s)} q^{-(\omega_s, 2 \rho)} \Trq(\metMP) p p \\
& = q^{2 (\alpha_s, \alpha_s) - 2 (\omega_s, \omega_s)} \opS_{1 2 3} \opSt_{2 3 4}^{-1} \CVp_2 p - q^{2 (\alpha_s, \alpha_s) - 2 (\omega_s, \omega_s)} q^{-(\omega_s, 2 \rho)} \opS_{1 2 3} \opSt_{2 3 4}^{-1} p p \\
& + (1 - q^{(\alpha_s, \alpha_s)}) q^{(\alpha_s, \alpha_s)} q^{-(\omega_s, 2 \rho)} (q^{(\omega_s, 2 \rho)} \qdim(V) - 1) p p.
\end{split}
\]
Using \cref{lem:categorical-identities} and naturality of the coevaluation $\CV$ as in \eqref{eq:coevaluations}, we get
\[
\opSt_{2 3 4}^{-1} \CVp_2 = q^{(\omega_s, \omega_s + 2 \rho)} \opSt_{2 3 4}^{-1} (\braid_{V, V^*})_{2 3} \CV_2
= q^{(\omega_s, \omega_s + 2 \rho)} (\braid_{V, V^*})_{2 3} (\braid_{V^*, V^*})_{3 4} \CV_2 = q^{(\omega_s, \omega_s + 2 \rho)} \CV_3.
\]
We also have $\opS_{1 2 3} \opSt_{2 3 4}^{-1} p p = q^{2 (\omega_s, \omega_s)} p p$ from \eqref{eq:algebra-relations}. Then we obtain
\[
\begin{split}
\imet \sigma (\del p \otimes \delbar p)
& = q^{2 (\alpha_s, \alpha_s) - (\omega_s, \omega_s)} q^{(\omega_s, 2 \rho)} \opS_{1 2 3} \CV_3 p - q^{2 (\alpha_s, \alpha_s)} q^{-(\omega_s, 2 \rho)} p p \\
& + (1 - q^{(\alpha_s, \alpha_s)}) q^{(\alpha_s, \alpha_s)} q^{-(\omega_s, 2 \rho)} (q^{(\omega_s, 2 \rho)} \qdim(V) - 1) p p.
\end{split}
\]
After some simplifications this can be rewritten as
\[
\begin{split}
\imet \sigma (\del p \otimes \delbar p)
& = q^{2 (\alpha_s, \alpha_s) - (\omega_s, \omega_s)} q^{(\omega_s, 2 \rho)} \opS_{1 2 3} \CV_3 p \\
& - \left( q^{(\alpha_s, \alpha_s)} q^{-(\omega_s, 2 \rho)} - (1 - q^{(\alpha_s, \alpha_s)}) q^{(\alpha_s, \alpha_s)} \qdim(V) \right) p p.
\end{split}
\]
Next, using \cref{lem:quantum-dimension} we can easily verify the identity
\[
q^{(\alpha_s, \alpha_s)} q^{-(\omega_s, 2 \rho)} - (1 - q^{(\alpha_s, \alpha_s)}) q^{(\alpha_s, \alpha_s)} \qdim(V)
= q^{2 (\alpha_s, \alpha_s)} q^{(\omega_s, 2 \rho)}.
\]
Finally taking into account \eqref{eq:inverse-metric} we obtain
\[
\begin{split}
\imet \sigma (\del p \otimes \delbar p)
& = q^{(\alpha_s, \alpha_s)} q^{2(\omega_s, 2 \rho)} (q^{(\alpha_s, \alpha_s)} q^{-(\omega_s, \omega_s + 2 \rho)} \opS_{1 2 3} \CV_3 p - q^{(\alpha_s, \alpha_s)} q^{-(\omega_s, 2 \rho)} p p) \\
& = q^{(\alpha_s, \alpha_s)} q^{2 (\omega_s, 2 \rho)} \imet (\del p \otimes \delbar p).
\end{split}
\]

\textit{Second identity.}
Similarly to the first case, we compute
\[
\begin{split}
\imet \sigma (\delbar p \otimes \del p)
& = q^{2 (\omega_s, \omega_s) - 2 (\alpha_s, \alpha_s)} \opS_{1 2 3}^{-1} \opSt_{2 3 4} (\del p, \delbar p) + (1 - q^{-(\alpha_s, \alpha_s)}) q^{-(\omega_s, 2 \rho)} \Trq(\metPM) p p \\
& = q^{(\omega_s, \omega_s) - (\alpha_s, \alpha_s)} q^{-(\omega_s, 2 \rho)} \opSt_{2 3 4} \CV_3 p - q^{2 (\omega_s, \omega_s) - (\alpha_s, \alpha_s)} q^{-(\omega_s, 2 \rho)} \opS_{1 2 3}^{-1} \opSt_{2 3 4} p p \\
& + (1 - q^{-(\alpha_s, \alpha_s)}) q^{-(\omega_s, 2 \rho)} (q^{-(\omega_s, 2 \rho)} \qdim(V) - 1) p p.
\end{split}
\]
We have seen in the derivation of the first identity that $\opSt_{2 3 4}^{-1} \CVp_2 = q^{(\omega_s, \omega_s + 2 \rho)} \CV_3$.
We also have $\opS_{1 2 3}^{-1} \opSt_{2 3 4} p p = q^{-2 (\omega_s, \omega_s)} p p$ using \eqref{eq:algebra-relations}. Then we get
\[
\begin{split}
\imet \sigma (\delbar p \otimes \del p)
& = q^{-(\alpha_s, \alpha_s)} q^{-2 (\omega_s, 2 \rho)} \CVp_2 p - q^{-(\alpha_s, \alpha_s)} q^{-(\omega_s, 2 \rho)} p p \\
& + (1 - q^{-(\alpha_s, \alpha_s)}) q^{-(\omega_s, 2 \rho)} (q^{-(\omega_s, 2 \rho)} \qdim(V) - 1) p p.
\end{split}
\]
After some simplifications we get
\[
\begin{split}
\imet \sigma (\delbar p \otimes \del p)
& = q^{-(\alpha_s, \alpha_s)} q^{-2 (\omega_s, 2 \rho)} \CVp_2 p \\
& - \left( q^{-(\omega_s, 2 \rho)} - (1 - q^{-(\alpha_s, \alpha_s)}) q^{-2 (\omega_s, 2 \rho)} \qdim(V) \right) p p.
\end{split}
\]
Now using \cref{lem:quantum-dimension} we can easily verify the identity
\[
q^{-(\omega_s, 2 \rho)} - (1 - q^{-(\alpha_s, \alpha_s)}) q^{-2 (\omega_s, 2 \rho)} \qdim(V) = q^{-(\alpha_s, \alpha_s)} q^{-3 (\omega_s, 2 \rho)}.
\]
Finally taking into account \eqref{eq:inverse-metric} we obtain
\[
\begin{split}
\imet \sigma (\delbar p \otimes \del p)
& = q^{-(\alpha_s, \alpha_s)} q^{-2 (\omega_s, 2 \rho)} (\CVp_2 p - q^{-(\omega_s, 2 \rho)} p p) \\
& = q^{-(\alpha_s, \alpha_s)} q^{-2 (\omega_s, 2 \rho)} \imet (\delbar p \otimes \del p). \qedhere
\end{split}
\]
\end{proof}

\begin{remark}
This result gives a twisted symmetry for the inverse metric $\imet$, reducing to the usual symmetry in the classical case.
It can be written in a nicer way by defining
\[
\tilde{\sigma} =
\begin{cases}
\sigma & \textrm{on } \calcP \otimes \calcP \\
\sigma & \textrm{on } \calcM \otimes \calcM \\
q^{-(\alpha_s, \alpha_s)} q^{-2 (\omega_s, 2 \rho)} \sigma & \textrm{on } \calcP \otimes \calcM \\
q^{(\alpha_s, \alpha_s)} q^{2 (\omega_s, 2 \rho)} \sigma & \textrm{on } \calcM \otimes \calcP
\end{cases}
.
\]
Then $\tilde{\sigma}$ is a $\subalg$-bimodule map $\calc \otimes_\subalg \calc \to \calc \otimes_\subalg \calc$ and we have $\imet \circ \tilde{\sigma} = \imet$.
\end{remark}

\section{Riemann tensor}
\label{sec:riemann-tensor}

In this section we compute the Riemann tensor corresponding to the quantum Levi-Civita connection $\nabla$.
We also show that it is a $\subalg$-bimodule map, a property that is not guaranteed to hold in general by its definition.
Finally we show various consequences of this fact, namely the extendability of the generalized braiding $\sigma$, an antisymmetry identity for the Riemann tensor and also a version of the braid equation satisfied by $\sigma$.

\subsection{Definition and computation}

We define the Riemann tensor as the curvature of the connection $\conn: \calc \to \calc \otimes_\subalg \calc$.
Indeed, in the classical case this corresponds to the usual definition up to an overall constant (see for instance \cite[Example 3.29]{quantum-book}).

\begin{definition}
The \emph{Riemann tensor} is the map $\curv: \calc \to \calc^2 \otimes_\subalg \calc$ defined by
\[
\curv := (\diff \otimes \id - (\wedge \otimes \id) \circ (\id \otimes \conn)) \circ \conn.
\]
\end{definition}

We point out that, in general, this is a left module map but not necessarily a right module map.
In our case, however, we are going to show that it is a $\subalg$-bimodule map.

The rest of this subsection is devoted to explicitly computing $\curv$.

\begin{lemma}
\label{lem:riemann-minus}
We have
\[
\curv(\delbar p) = - \EV_{2 3} \EV_{2 3} \delbar p \wedge \del p \otimes \delbar p - q^{-(\omega_s, 2 \rho)} \delbar p \wedge \metPM. \qedhere
\]
\end{lemma}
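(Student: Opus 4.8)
The plan is to evaluate the definition $\curv = (\diff \otimes \id)\circ\conn - (\wedge\otimes\id)\circ(\id\otimes\conn)\circ\conn$ directly on $\delbar p$, starting from the explicit connection
\[
\conn(\delbar p) = \EV_{2 3}\del p \otimes \delbar p - q^{-(\omega_s, 2 \rho)} p \metPM
\]
given in \eqref{eq:connection}, and handling the two summands of $\curv$ separately. The decisive simplifications will come from the Kähler identity \eqref{eq:kahler-property}, which yields $(\diff \otimes \id)(\metPM) = 0$, and from the metric-compatibility identity \eqref{eq:metric-compatibility}, which yields $(\id \otimes \conn)(\metPM) = 0$. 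Together these annihilate every contribution in which $\diff$ or $\conn$ would act on the metric leg, so that the entire computation reduces to controlling the generator $\del p$ and bookkeeping the evaluation contractions.

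For the first summand, the graded Leibniz rule combined with \eqref{eq:kahler-property} gives
\[
(\diff \otimes \id)(\conn \delbar p) = \EV_{2 3}\diff(\del p) \otimes \delbar p - q^{-(\omega_s, 2 \rho)} \diff p \wedge \metPM,
\]
where the term $-q^{-(\omega_s,2\rho)} p\,(\diff\otimes\id)(\metPM)$ has dropped out. Here I would need the exterior derivative of the generator, extracted from the idempotency relation \eqref{eq:projection-relation} and the contraction identities \eqref{eq:evaluations-calculi}: differentiating $\EV_{2 3}\del p\, p = \del p$ and using that the calculus is a double complex (so $\diff(\del p) = \delbar\del p \in \calc^{(1,1)}$) should produce a closed form for $\EV_{2 3}\diff(\del p)$ as a contracted wedge of $\delbar p$ and $\del p$, which after using the mixed degree-two relation \eqref{eq:universal-mixed} becomes $-\EV_{2 3}\EV_{2 3}\delbar p \wedge \del p$. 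Splitting $\diff p = \del p + \delbar p$ then already isolates the two terms of the claim: the piece $\EV_{2 3}\diff(\del p)\otimes\delbar p$ gives $-\EV_{2 3}\EV_{2 3}\delbar p\wedge\del p\otimes\delbar p$, and the antiholomorphic part of $-q^{-(\omega_s,2\rho)}\diff p \wedge \metPM$ gives $-q^{-(\omega_s,2\rho)}\delbar p \wedge \metPM$. What survives is the purely holomorphic piece $-q^{-(\omega_s,2\rho)}\del p \wedge \metPM \in \calc^{(2,0)}\otimes\calcM$, which must be cancelled by the second summand.

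For the second summand, $(\id\otimes\conn)$ acts on the rightmost leg: on the $p\metPM$-term it gives $-q^{-(\omega_s,2\rho)}p\,(\id\otimes\conn)(\metPM) = 0$ by \eqref{eq:metric-compatibility}, while on the first term it reinserts $\conn(\delbar p)$ into the last two legs. Applying $\wedge\otimes\id$ then wedges $\del p$ against the first leg of $\conn(\delbar p)$, and since that leg is always of type $\del p$, every resulting term lies in $\calc^{(2,0)}\otimes\calcM$. I would simplify these $\del p\wedge\del p$-type expressions using the right-module relations \eqref{eq:right-module-relations}, the degree-two relations \eqref{eq:universal-calcP}, and the equivalent forms collected in \cref{lem:degree-two-relations}, with the goal of showing that the second summand equals exactly $-q^{-(\omega_s,2\rho)}\del p \wedge \metPM$. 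Subtracting it then cancels the leftover holomorphic term from the first summand, leaving precisely the two $(1,1)$-type terms in the statement.

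The main obstacle is this final cancellation of the $(2,0)$-valued terms: it requires rewriting $\EV_{2 3}\del p \wedge \metPM$ and the contracted $\del p \wedge \conn(\delbar p)$ in a common normal form, which is where the full strength of the degree-two relations and the $\opS$-identities recalled in \cref{sec:properties-S} enters. Pinning down the closed expression for $\EV_{2 3}\diff(\del p)$ and matching the contractions so that it collapses to $-\EV_{2 3}\EV_{2 3}\delbar p\wedge\del p$ is the other delicate point; once both are secured, everything else is a routine application of the algebra and calculus relations, and the output automatically lands in $\calc^{(1,1)}\otimes\calc$.
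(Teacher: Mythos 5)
Your overall strategy coincides with the paper's: evaluate both summands of $\curv(\delbar p)$ on the representative \eqref{eq:connection}, use \eqref{eq:kahler-property} and \eqref{eq:metric-compatibility} to kill every term where $\diff$ or $\conn$ hits the metric leg, split $\diff p = \del p + \delbar p$, and cancel the leftover $(2,0)$ piece against the second summand; your target values for both summands are also correct. The genuine gap is at the single most important input: the closed formula for $\del \delbar p$. The paper evaluates $\EV_{2 3} \delbar \del p \otimes \delbar p$ using the identity $\del \delbar p = \EV_{2 3} \del p \wedge \delbar p + \EV_{2 3} \delbar p \wedge \del p$, imported from \cite[Lemma C.6]{paper-projective}, after which the $\del p \wedge \delbar p$ contribution dies against $\otimes\, \delbar p$ by \eqref{eq:tensor-algebra-evaluations}. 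Your proposed substitute --- differentiating $\EV_{2 3} \del p\, p = \del p$ and invoking \eqref{eq:universal-mixed} --- cannot produce this formula. Applying $\delbar$ to $\EV_{2 3} \del p\, p = \del p$ gives only the self-referential relation $\EV_{2 3} (\delbar \del p)\, p = \delbar \del p + \EV_{2 3} \del p \wedge \delbar p$; combined with $\delbar p = \EV_{2 3} p\, \delbar p$ and moving $p$ across $\otimes_\subalg$, this does prove the vanishing $\EV_{2 3} \EV_{2 3} \del p \wedge \delbar p \otimes \delbar p = 0$, but it says nothing about the value of $\EV_{2 3} \delbar \del p \otimes \delbar p$ itself. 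Moreover \eqref{eq:universal-mixed} is a relation purely among products of one-forms (it trades $\delbar p \wedge \del p$ for $\del p \wedge \delbar p$); it contains no second derivatives, so it cannot convert $\delbar \del p$ into wedge products either. You need \cite[Lemma C.6]{paper-projective}, or some re-derivation of a closed form for $\delbar \del p$ (for instance along the lines of the proof of \cref{lem:degree-two-relations}, where applying $\EVp_{3 4}$ yields $(1 - q^{(\alpha_s, \alpha_s)}) q^{(\omega_s, 2 \rho)} \delbar \del p = q^{(\alpha_s, \alpha_s)} \EVp_{3 4} \opT_{1 2 3 4} \delbar p \wedge \del p$); without such an input the main term of the lemma is out of reach.

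By contrast, you substantially overestimate the difficulty of the second summand. No right-module relations, no \eqref{eq:universal-calcP}, and no $\opS$-identities are needed: the dangerous term $\EV_{2 3} \EV_{4 5} \del p \otimes \del p \otimes \delbar p$ produced by $(\id \otimes \conn)$ vanishes already at the tensor level by \eqref{eq:tensor-algebra-evaluations}, and the surviving term is $- q^{-(\omega_s, 2 \rho)} \EV_{2 3} \del p \otimes p\, \metPM = - q^{-(\omega_s, 2 \rho)} \del p \otimes \metPM$ by \eqref{eq:evaluations-calculi}. Hence the second summand equals $- q^{-(\omega_s, 2 \rho)} \del p \wedge \metPM$ in three lines, and the ``final cancellation of the $(2,0)$-valued terms'' you flag as the main obstacle is then immediate; the only real content of the proof is the $\del \delbar p$ formula discussed above.
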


\begin{proof}
According to \eqref{eq:connection}, the quantum Levi-Civita connection acts on $\delbar p$ by
\[
\conn(\delbar p) = \EV_{2 3} \del p \otimes \delbar p - q^{-(\omega_s, 2 \rho)} p \metPM.
\]
First, using $(\diff \otimes \id)(\metPM) = 0$ from \eqref{eq:kahler-property} we compute
\[
(\diff \otimes \id) \conn(\delbar p) = \EV_{2 3} \delbar \del p \otimes \delbar p - q^{-(\omega_s, 2 \rho)} \diff p \wedge \metPM.
\]
We have the identity (see for instance \cite[Lemma C.6]{paper-projective})
\[
\del \delbar p = \EV_{2 3} \del p \wedge \delbar p + \EV_{2 3} \delbar p \wedge \del p.
\]
Then using $\delbar \del = - \del \delbar$ we can rewrite
\[
\EV_{2 3} \delbar \del p \otimes \delbar p
= - \EV_{2 3} \EV_{2 3} \del p \wedge \delbar p \otimes \delbar p - \EV_{2 3} \EV_{2 3} \delbar p \wedge \del p \otimes \delbar p.
\]
The first term vanishes, since $\EV_{2 3} \EV_{2 3} = \EV_{2 3} \EV_{4 5}$ and $\EV_{2 3} \delbar p \otimes \delbar p = 0$ from \eqref{eq:tensor-algebra-evaluations}. Then
\[
(\diff \otimes \id) \conn(\delbar p) = - \EV_{2 3} \EV_{2 3} \delbar p \wedge \del p \otimes \delbar p - q^{-(\omega_s, 2 \rho)} \diff p \wedge \metPM.
\]

Similarly, using $(\id \otimes \conn)(\metPM) = 0$ from \eqref{eq:metric-compatibility} we obtain
\[
\begin{split}
(\id \otimes \conn) \conn(\delbar p) = \EV_{2 3} \EV_{4 5} \del p \otimes \del p \otimes \delbar p - q^{-(\omega_s, 2 \rho)} \EV_{2 3} \del p \otimes p \metPM.
\end{split}
\]
The first term vanishes since $\EV_{2 3} \EV_{4 5} = \EV_{2 3} \EV_{2 3}$ and $\EV_{2 3} \del p \otimes \del p = 0$ from \eqref{eq:tensor-algebra-evaluations}. Then
\[
(\id \otimes \conn) \conn(\delbar p) = - q^{-(\omega_s, 2 \rho)} \EV_{2 3} \del p p \otimes \metPM = - q^{-(\omega_s, 2 \rho)} \del p \otimes \metPM.
\]

Putting the two terms together and using $\diff = \del + \delbar$ we obtain
\[
\begin{split}
\curv(\delbar p) & = (\diff \otimes \id) \conn(\delbar p) - (\wedge \otimes \id) (\id \otimes \conn) \conn(\delbar p) \\
& = - \EV_{2 3} \EV_{2 3} \delbar p \wedge \del p \otimes \delbar p - q^{-(\omega_s, 2 \rho)} \delbar p \wedge \metPM. \qedhere
\end{split}
\]
\end{proof}

We proceed by computing $\curv(\del p)$, which is technically more challenging. We derive two different expressions for it.
The second one, which is seemingly more complicated, is going to be used to simplify things in the computation of the Ricci tensor.

\begin{lemma}
\label{lem:riemann-plus}
We have
\[
\begin{split}
\curv(\del p)
& = q^{(\alpha_s, \alpha_s)} \EV_{2 3} \opT_{1 2 3 4} \EV_{2 3} \del p \wedge \delbar p \otimes \del p \\
& + q^{(\alpha_s, \alpha_s)} \EV_{2 3} \opT_{1 2 3 4} \EV_{2 3} \delbar p \wedge \del p \otimes \del p \\
& - q^{(\alpha_s, \alpha_s)} q^{-(\omega_s, 2 \rho)} \del p \wedge \metMP.
\end{split}
\]
We can also write this as
\[
\begin{split}
\curv(\del p)
& = q^{(\alpha_s, \alpha_s)} \EV_{2 3} \opT_{1 2 3 4} \EV_{2 3} \del p \wedge \delbar p \otimes \del p \\
& - \EV_{2 3} \opT_{1 2 3 4} \EV_{2 3} \opT_{1 2 3 4}^{-1} \del p \wedge \delbar p \otimes \del p \\
& - (1 - q^{-(\alpha_s, \alpha_s)}) q^{-(\omega_s, 2 \rho)} \wedge(\metPM) \otimes \del p \\
& - q^{(\alpha_s, \alpha_s)} q^{-(\omega_s, 2 \rho)} \del p \wedge \metMP.
\end{split}
\]
\end{lemma}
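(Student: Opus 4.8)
The plan is to compute $\curv(\del p) = (\diff \otimes \id)\conn(\del p) - (\wedge \otimes \id)(\id \otimes \conn)\conn(\del p)$ by evaluating the two summands separately, following the template used for $\curv(\delbar p)$ in \cref{lem:riemann-minus}. Throughout I would use the form of the connection from \eqref{eq:connection} rewritten via $\opT_{1 2 3 4} = \opS_{1 2 3}\opSt_{2 3 4}$ as in \eqref{eq:T-map}, namely $\conn(\del p) = q^{(\alpha_s, \alpha_s)}\EV_{2 3}\opT_{1 2 3 4}\delbar p \otimes \del p - q^{(\alpha_s, \alpha_s)}q^{-(\omega_s, 2 \rho)}p\metMP$.

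For the first summand, applying $\diff$ to the leading leg turns $\delbar p$ into $\diff \delbar p = \del \delbar p$ (using $\delbar^2 = 0$), while on the term $p \metMP$ the Leibniz rule together with the K\"ahler identity $(\diff \otimes \id)(\metMP) = 0$ from \eqref{eq:kahler-property} leaves only $\diff p \wedge \metMP$. I would then substitute the identity $\del \delbar p = \EV_{2 3}\del p \wedge \delbar p + \EV_{2 3}\delbar p \wedge \del p$ recalled in the proof of \cref{lem:riemann-minus}. The crucial difference from the antiholomorphic case is that here the intervening $\opT_{1 2 3 4}$ sits between the outer evaluation and the trailing $\del p$, so that the leg contracted by the outer $\EV_{2 3}$ is no longer the one producing a vanishing $\EV_{2 3}\del p \otimes \del p$; consequently both resulting terms survive, yielding $(\diff \otimes \id)\conn(\del p)$ as the two $\EV_{2 3}\opT_{1 2 3 4}\EV_{2 3}$ terms plus $-q^{(\alpha_s, \alpha_s)}q^{-(\omega_s, 2 \rho)}\diff p \wedge \metMP$.

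For the second summand, applying $\conn$ to the trailing leg annihilates the term $p \metMP$ by metric compatibility $(\id \otimes \conn)(\metMP) = 0$ from \eqref{eq:metric-compatibility}, leaving $q^{(\alpha_s, \alpha_s)}\EV_{2 3}\opT_{1 2 3 4}\delbar p \otimes \conn(\del p)$; here one uses that $\EV$ and $\opT$ produce scalar coefficients on the quantum-group indices and hence commute with $\id \otimes \conn$, introducing no Leibniz terms. Substituting \eqref{eq:connection} once more splits this into a double-$\delbar p$ piece and a piece proportional to $\EV_{2 3}\opT_{1 2 3 4}\delbar p \otimes p \metMP$. Under $(\wedge \otimes \id)$ the double-$\delbar p$ piece should vanish, while the second piece, after rewriting it as $(\EV_{2 3}\opT_{1 2 3 4}\delbar p \otimes p) \otimes \metMP$ (with $\metMP$ a spectator) and simplifying $\EV_{2 3}\opT_{1 2 3 4}\delbar p \otimes p$ to a scalar multiple of $\delbar p$ via the right-module relations \eqref{eq:right-module-relations}, the evaluation identities \eqref{eq:evaluations-calculi} and the properties of \cref{sec:properties-S}, collapses to $-q^{(\alpha_s, \alpha_s)}q^{-(\omega_s, 2 \rho)}\delbar p \wedge \metMP$. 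Subtracting the two summands, the metric contributions combine as $-q^{(\alpha_s, \alpha_s)}q^{-(\omega_s, 2 \rho)}(\diff p - \delbar p) \wedge \metMP = -q^{(\alpha_s, \alpha_s)}q^{-(\omega_s, 2 \rho)}\del p \wedge \metMP$ by $\diff = \del + \delbar$, giving the first displayed expression.

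To obtain the second expression I would rewrite only the term $q^{(\alpha_s, \alpha_s)}\EV_{2 3}\opT_{1 2 3 4}\EV_{2 3}\delbar p \wedge \del p \otimes \del p$, inserting the mixed degree-two relation \eqref{eq:universal-mixed} for its inner factor $\delbar p \wedge \del p$. The $\opT_{1 2 3 4}^{-1}\del p \wedge \delbar p$ part of that relation produces $-\EV_{2 3}\opT_{1 2 3 4}\EV_{2 3}\opT_{1 2 3 4}^{-1}\del p \wedge \delbar p \otimes \del p$, while the $\EVp_{3 4}\opT^{-1}_{3 4 5 6}p\,\del p \wedge \delbar p$ part collapses, using \cref{lem:evaluation-T} and the definition \eqref{eq:quantum-metric} of $\metPM$, to $-(1 - q^{-(\alpha_s, \alpha_s)})q^{-(\omega_s, 2 \rho)}\wedge(\metPM) \otimes \del p$. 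The main obstacle I anticipate is the leg bookkeeping in the nested contractions --- in particular tracking the index shifts when the $\del \delbar p$ identity and the relation \eqref{eq:universal-mixed} are inserted inside the outer $\EV_{2 3}\opT_{1 2 3 4}\EV_{2 3}$, and establishing that the double-$\delbar p$ piece really vanishes under $(\wedge \otimes \id)$ (which I expect to follow from $\EV_{2 3}\delbar p \otimes \delbar p = 0$ of \eqref{eq:tensor-algebra-evaluations} together with the relations \eqref{eq:universal-calcM}, in analogy with the vanishing of the double-$\del p$ piece in \cref{lem:riemann-minus}); once these collapses are justified, only the scalar factors remain to be tracked.
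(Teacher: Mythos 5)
Your treatment of the first summand $(\diff \otimes \id)\conn(\del p)$ and your outline for passing from the first displayed expression to the second (via \eqref{eq:universal-mixed} and \cref{lem:evaluation-T}) follow the paper's route and are fine. The genuine gap is in the second summand $(\wedge \otimes \id)(\id \otimes \conn)\conn(\del p)$, where you make two incorrect claims. First, the double-$\delbar p$ piece
\[
q^{2 (\alpha_s, \alpha_s)} \, \EV_{2 3} \opT_{1 2 3 4} \EV_{4 5} \opT_{3 4 5 6} \, \delbar p \otimes \delbar p \otimes \del p
\]
does \emph{not} vanish, neither before nor after applying $(\wedge \otimes \id)$. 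The analogy with \cref{lem:riemann-minus} breaks down for exactly the reason you yourself identified in the first summand: the intervening $\opT$ maps prevent the evaluations from contracting the pair $\delbar p \otimes \delbar p$ directly, so \eqref{eq:tensor-algebra-evaluations} is not applicable, and \eqref{eq:universal-calcM} is of no help either. The paper's proof devotes its central (and hardest) computation to precisely this term: using $\opT_{1 2 3 4} = \opSt_{2 3 4} \opS_{1 2 3}$, the evaluation identities \eqref{eq:S-evaluation}, the braid equation for $\opS$ from \cref{prop:S-properties} and the Hecke relations \eqref{eq:S-quadratic-relations}, one finds that this piece equals $q^{2 (\alpha_s, \alpha_s)} (1 - q^{-(\alpha_s, \alpha_s)}) q^{-(\omega_s, 2 \rho)} \, \delbar p \otimes \metMP$, which is nonzero for $q \neq 1$ (and $\delbar p \wedge \metMP \neq 0$ once $r \geq 2$). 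Second, your coefficient for the $p \, \metMP$ piece is wrong: since the tensor product is over $\subalg$, what must be computed is $\EV_{2 3} \opT_{1 2 3 4} \, \delbar p \, p$, and the right-module relation \eqref{eq:right-module-relations} does not give this (it expresses $\opT_{1 2 3 4} \, p \, \delbar p$, not $\opT_{1 2 3 4} \, \delbar p \, p$). The correct evaluation, via $\opS_{1 2 3} \delbar p \, p = q^{(\omega_s, \omega_s)} \delbar p \, p$, $\EV_{2 3} \opSt_{2 3 4} = q^{-(\omega_s, \omega_s + 2 \rho)} \EVp_{3 4}$ and $\EVp_{1 2} p = q^{(\omega_s, 2 \rho)}$ from \eqref{eq:algebra-relations}, gives $\EV_{2 3} \opT_{1 2 3 4} \, \delbar p \, p = \delbar p$ with coefficient exactly $1$, so this piece contributes $- q^{2 (\alpha_s, \alpha_s)} q^{-(\omega_s, 2 \rho)} \, \delbar p \otimes \metMP$, not your claimed $- q^{(\alpha_s, \alpha_s)} q^{-(\omega_s, 2 \rho)} \, \delbar p \otimes \metMP$.

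The two mistakes compensate exactly, since $q^{2 (\alpha_s, \alpha_s)} (1 - q^{-(\alpha_s, \alpha_s)}) - q^{2 (\alpha_s, \alpha_s)} = - q^{(\alpha_s, \alpha_s)}$; this is why you land on the correct cancellation against the $\delbar p$ part of $\diff p \wedge \metMP$ and hence on the stated formula, but it is a coincidence of two errors rather than a proof. What is missing is the actual evaluation of the double-$\delbar p$ term, which is the technical heart of the paper's argument for this lemma. Note also that both of your claims \emph{are} correct in the classical limit $q \to 1$ (the factor $1 - q^{-(\alpha_s, \alpha_s)}$ kills the double-$\delbar p$ term and $q^{2 (\alpha_s, \alpha_s)} \to 1$), which is presumably what motivated them; in the quantum case both fail, and their failure is exactly what the nontrivial quantum correction terms in this lemma are made of.
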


\begin{proof}
According to \eqref{eq:connection}, the quantum Levi-Civita connection acts on $\del p$ by
\[
\conn(\del p) = q^{(\alpha_s, \alpha_s)} \EV_{2 3} \opT_{1 2 3 4} \delbar p \otimes \del p - q^{(\alpha_s, \alpha_s)} q^{-(\omega_s, 2 \rho)} p \metMP.
\]

\textit{First expression.}
Using the identity $(\diff \otimes \id)(\metMP) = 0$ from \eqref{eq:kahler-property} we compute
\[
\begin{split}
(\diff \otimes \id) \conn(\del p)
& = q^{(\alpha_s, \alpha_s)} \EV_{2 3} \opT_{1 2 3 4} \del \delbar p \otimes \del p - q^{(\alpha_s, \alpha_s)} q^{-(\omega_s, 2 \rho)} \diff p \wedge \metMP \\
& = q^{(\alpha_s, \alpha_s)} \EV_{2 3} \opT_{1 2 3 4} \EV_{2 3} \del p \wedge \delbar p \otimes \del p + q^{(\alpha_s, \alpha_s)} \EV_{2 3} \opT_{1 2 3 4} \EV_{2 3} \delbar p \wedge \del p \otimes \del p \\
& - q^{(\alpha_s, \alpha_s)} q^{-(\omega_s, 2 \rho)} \diff p \wedge \metMP.
\end{split}
\]
Here we also used the identity for $\del \delbar p$ as in \cref{lem:riemann-minus}.

On the other hand, using $(\id \otimes \conn) (\metMP) = 0$ from \eqref{eq:metric-compatibility} we have
\[
\begin{split}
(\id \otimes \conn) \conn(\del p)
& = q^{(\alpha_s, \alpha_s)} \EV_{2 3} \opT_{1 2 3 4} \delbar p \otimes \conn(\del p) \\
& = q^{2 (\alpha_s, \alpha_s)} \EV_{2 3} \opT_{1 2 3 4} \EV_{4 5} \opT_{3 4 5 6} \delbar p \otimes \delbar p \otimes \del p - q^{2 (\alpha_s, \alpha_s)} q^{-(\omega_s, 2 \rho)} \EV_{2 3} \opT_{1 2 3 4} \delbar p \otimes p \metMP.
\end{split}
\]
The relations \eqref{eq:calculus-relations} and \eqref{eq:right-module-relations} give $\opS_{1 2 3} \delbar p p = q^{(\omega_s, \omega_s)} \delbar p p$.
Then, using this identity together with \eqref{eq:S-evaluation}, we can rewrite the second term above as
\[
\begin{split}
\EV_{2 3} \opT_{1 2 3 4} \delbar p p
& = \EV_{2 3} \opSt_{2 3 4} \opS_{1 2 3} \delbar p p
= q^{(\omega_s, \omega_s)} \EV_{2 3} \opSt_{2 3 4} \delbar p p \\
& = q^{-(\omega_s, 2 \rho)} \EVp_{3 4} \delbar p p = \delbar p.
\end{split}
\]

Now we focus on the first term, which we write as
\[
A = \EV_{2 3} \opT_{1 2 3 4} \EV_{4 5} \opT_{3 4 5 6} \delbar p \otimes \delbar p \otimes \del p.
\]
Using $\opT_{1 2 3 4} = \opSt_{2 3 4} \opS_{1 2 3}$ and $\EV_{2 3} \opSt_{2 3 4} = q^{-(\omega_s, \omega_s + 2 \rho)} \EVp_{3 4}$ from \eqref{eq:S-evaluation} we rewrite it as
\[
\begin{split}
A & = q^{-2 (\omega_s, \omega_s + 2 \rho)} \EVp_{3 4} \opS_{1 2 3} \EVp_{5 6} \opS_{3 4 5} \delbar p \otimes \delbar p \otimes \del p \\
& = q^{-2 (\omega_s, \omega_s + 2 \rho)} \EVp_{3 4} \EVp_{5 6} \opS_{1 2 3} \opS_{3 4 5} \delbar p \otimes \delbar p \otimes \del p.
\end{split}
\]
We have $\opS_{1 2 3} \delbar p \otimes \delbar p = q^{(\omega_s, \omega_s)} \delbar p \otimes \delbar p$ from \eqref{eq:tensor-algebra-identities}.
Using this together with the "braid equation" $\opS_{1 2 3} \opS_{3 4 5} \opS_{1 2 3} = \opS_{3 4 5} \opS_{1 2 3} \opS_{3 4 5}$ from \cref{prop:S-properties} we get
\[
A = q^{-(\omega_s, \omega_s)} q^{-2 (\omega_s, \omega_s + 2 \rho)} \EVp_{3 4} \EVp_{3 4} \opS_{3 4 5} \opS_{1 2 3} \opS_{3 4 5} \delbar p \otimes \delbar p \otimes \del p.
\]
Using \eqref{eq:S-evaluation} again this can be rewritten as
\[
\begin{split}
A & = q^{-(\omega_s, \omega_s)} q^{-(\omega_s, \omega_s + 2 \rho)} \EVp_{3 4} \EV_{4 5} \opS_{1 2 3} \opS_{3 4 5} \delbar p \otimes \delbar p \otimes \del p \\
& = q^{-(\omega_s, \omega_s)} q^{-(\omega_s, \omega_s + 2 \rho)} \EVp_{3 4} \opS_{1 2 3} \EV_{4 5} \opS_{3 4 5} \delbar p \otimes \delbar p \otimes \del p.
\end{split}
\]
Next, using the quadratic relation \eqref{eq:S-quadratic-relations} for $\opS_{3 4 5}$ and \eqref{eq:S-evaluation} we have
\[
\begin{split}
\EV_{4 5} \opS_{3 4 5} & = q^{2 (\omega_s, \omega_s) - (\alpha_s, \alpha_s)} \EV_{4 5} \opS_{3 4 5}^{-1} + q^{(\omega_s, \omega_s)} (1 - q^{-(\alpha_s, \alpha_s)}) \EV_{4 5} \\
& = q^{2 (\omega_s, \omega_s) - (\alpha_s, \alpha_s)} q^{-(\omega_s, \omega_s + 2 \rho)} \EVp_{3 4} + q^{(\omega_s, \omega_s)} (1 - q^{-(\alpha_s, \alpha_s)}) \EV_{4 5}.
\end{split}
\]
The first term gives zero when acting on $\delbar p \otimes \delbar p \otimes \del p$. Hence we obtain
\[
\begin{split}
A & = q^{-(\omega_s, \omega_s)} q^{-(\omega_s, \omega_s + 2 \rho)} q^{(\omega_s, \omega_s)} (1 - q^{-(\alpha_s, \alpha_s)}) \EVp_{3 4} \opS_{1 2 3} \EV_{4 5} \delbar p \otimes \delbar p \otimes \del p \\
& = (1 - q^{-(\alpha_s, \alpha_s)}) q^{-(\omega_s, 2 \rho)} \EVp_{3 4} \EV_{4 5} \delbar p \otimes \delbar p \otimes \del p \\
& = (1 - q^{-(\alpha_s, \alpha_s)}) q^{-(\omega_s, 2 \rho)} \delbar p \otimes \metMP.
\end{split}
\]

Plugging this expression back into $(\id \otimes \conn) \conn(\del p)$ we get
\[
\begin{split}
(\id \otimes \conn) \conn(\del p)
& = q^{2 (\alpha_s, \alpha_s)} (1 - q^{-(\alpha_s, \alpha_s)}) q^{-(\omega_s, 2 \rho)} \delbar p \otimes \metMP - q^{2 (\alpha_s, \alpha_s)} q^{-(\omega_s, 2 \rho)} \delbar p \otimes \metMP \\
& = - q^{(\alpha_s, \alpha_s)} q^{-(\omega_s, 2 \rho)} \delbar p \otimes \metMP.
\end{split}
\]
Finally combining this with the expression for $(\diff \otimes \id) \conn(\del p)$ we obtain
\[
\begin{split}
\curv(\del p)
& = q^{(\alpha_s, \alpha_s)} \EV_{2 3} \opT_{1 2 3 4} \EV_{2 3} \del p \wedge \delbar p \otimes \del p + q^{(\alpha_s, \alpha_s)} \EV_{2 3} \opT_{1 2 3 4} \EV_{2 3} \delbar p \wedge \del p \otimes \del p \\
& - q^{(\alpha_s, \alpha_s)} q^{-(\omega_s, 2 \rho)} \del p \wedge \metMP.
\end{split}
\]
Note that the term of the form $\delbar p \wedge \delbar p \otimes \del p$ has canceled out.

\textit{Second expression.}
It is useful to derive an alternative expression for $\curv(\del p)$, obtained by rewriting the term $\delbar p \wedge \del p$ in the form $\del p \wedge \delbar p$.
Using \cref{lem:degree-two-relations} we have
\[
\delbar p \wedge \del p = - q^{-(\alpha_s, \alpha_s)} \opT_{1 2 3 4}^{-1} \del p \wedge \delbar p + q^{-(\alpha_s, \alpha_s)} q^{-(\omega_s, 2 \rho)} \EVp_{3 4} \opT_{3 4 5 6}^{-1} p \del p \wedge \delbar p.
\]
Moreover according to \cref{lem:evaluation-T} we have
\[
\EVp_{1 2} \opT_{1 2 3 4}^{-1} \del p \otimes \delbar p
= - (1 - q^{(\alpha_s, \alpha_s)}) q^{(\omega_s, 2 \rho)} \EV_{2 3} \del p \otimes \delbar p + (1 - q^{(\alpha_s, \alpha_s)}) \metPM p.
\]
Combining these two identities we obtain
\[
\begin{split}
\delbar p \wedge \del p & =
- q^{-(\alpha_s, \alpha_s)} \opT_{1 2 3 4}^{-1} \del p \wedge \delbar p + (1 - q^{-(\alpha_s, \alpha_s)}) \EV_{4 5} p \del p \wedge \delbar p \\
& - (1 - q^{-(\alpha_s, \alpha_s)}) q^{-(\omega_s, 2 \rho)} p p \wedge(\metPM).
\end{split}
\]
Applying $\EV_{2 3}$ and using \eqref{eq:evaluations-calculi} we get
\[
\begin{split}
\EV_{2 3} \delbar p \wedge \del p & =
- q^{-(\alpha_s, \alpha_s)} \EV_{2 3} \opT_{1 2 3 4}^{-1} \del p \wedge \delbar p - (1 - q^{-(\alpha_s, \alpha_s)}) q^{-(\omega_s, 2 \rho)} \wedge(\metPM) p.
\end{split}
\]

Now plugging this into the previous expression for $\curv(\del p)$ gives
\[
\begin{split}
\curv(\del p)
& = q^{(\alpha_s, \alpha_s)} \EV_{2 3} \opT_{1 2 3 4} \EV_{2 3} \del p \wedge \delbar p \otimes \del p \\
& - \EV_{2 3} \opT_{1 2 3 4} \EV_{2 3} \opT_{1 2 3 4}^{-1} \del p \wedge \delbar p \otimes \del p \\
& + (1 - q^{(\alpha_s, \alpha_s)}) q^{-(\omega_s, 2 \rho)} \EV_{2 3} \opT_{1 2 3 4} \wedge(\metPM) \otimes p \del p \\
& - q^{(\alpha_s, \alpha_s)} q^{-(\omega_s, 2 \rho)} \del p \wedge \metMP.
\end{split}
\]
Finally we can use \eqref{eq:right-module-relations} to compute
\[
\EV_{2 3} \opT_{1 2 3 4} p \del p = q^{-(\alpha_s, \alpha_s)} \EV_{2 3} \del p p = q^{-(\alpha_s, \alpha_s)} \del p,
\]
which allows us to rewrite the third term in the claimed form.
\end{proof}

\begin{remark}
It can be shown that the term $\EV_{2 3} \opT_{1 2 3 4} \EV_{2 3} \del p \wedge \delbar p \otimes \del p$ vanishes in the classical case.
On the other hand it is non-zero in the quantum case, as we shall see.
\end{remark}

It follows from our computations for the Riemann tensor that
\[
\curv \in \calc^{(1, 1)} \otimes \calc,
\]
where $\calc^{(1, 1)} \subset \calc^2$ denotes the subspace of elements of the form $\del p \wedge \delbar p$ (or equivalently of the form $\delbar p \wedge \del p$).
This is analogous to what we have classically for a \emph{Kähler manifold}.

\subsection{Bimodule map}

As mentioned before, the Riemann tensor $\curv$ is not a bimodule map in general.
However this turns out to be the case here, as we now show.

\begin{theorem}
\label{thm:R-bimodule}
We have that $\curv: \calc \to \calc^2 \otimes_\subalg \calc$ is a $\subalg$-bimodule map.
\end{theorem}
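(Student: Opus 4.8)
The plan is to use that the curvature of any left connection is automatically a left $\subalg$-module map, so that the only thing to prove is right $\subalg$-linearity, i.e. $\curv(\omega b) = \curv(\omega) b$ for all $\omega \in \calc$ and $b \in \subalg$. I would first reduce this to a check on generators. Writing $\Phi(\omega, b) := \curv(\omega b) - \curv(\omega) b$, one checks that $\Phi$ is left-linear in $\omega$ and satisfies the cocycle identity $\Phi(\omega, bc) = \Phi(\omega b, c) + \Phi(\omega, b) c$. Hence $\Phi$ vanishes identically as soon as it vanishes for $b$ an algebra generator and $\omega$ a left-module generator of $\calc$. In index-free notation it therefore suffices to establish the two identities $\curv(\del p\, p) = \curv(\del p)\, p$ and $\curv(\delbar p\, p) = \curv(\delbar p)\, p$.

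Before computing, it is worth isolating the conceptual obstruction. Using that $\conn$ is a bimodule connection with braiding $\sigma$, a direct manipulation of the definition gives
\[
\curv(\omega b) = \curv(\omega) b + \mathcal{D}\bigl(\sigma(\omega \otimes \diff b)\bigr) - (\wedge \otimes \id)(\id \otimes \sigma)\bigl(\conn(\omega) \otimes \diff b\bigr),
\]
where $\mathcal{D} := (\diff \otimes \id) - (\wedge \otimes \id)(\id \otimes \conn)$ is the covariant exterior derivative on $\calc \otimes_\subalg \calc$. Thus right-linearity is equivalent to the single identity $\mathcal{D}(\sigma(\omega \otimes \diff b)) = (\wedge \otimes \id)(\id \otimes \sigma)(\conn(\omega) \otimes \diff b)$, which can be attacked using torsion-freeness of $\conn$ together with the relation $\wedge \circ (\id + \sigma) = 0$ from \cref{cor:compatibility-wedge}.

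Concretely, I would verify the two generator identities directly. For the left-hand sides I would rewrite $\del p\, p$ and $\delbar p\, p$ as left-module expressions via the right-module relations \eqref{eq:right-module-relations}, e.g. $\del p\, p = q^{(\alpha_s, \alpha_s)} \opT_{1 2 3 4} p \del p$; since $\curv$ is left-linear and covariant, naturality of the braiding lets me transport the categorical operator $\opT_{1 2 3 4}$ — which moves the legs of $p$ past those of $\del p$ — through $\curv$, reducing the left-hand side to $p$ braided past $\curv(\del p)$. For the right-hand sides I would take the explicit expressions from \cref{lem:riemann-minus,lem:riemann-plus}, noting that $\curv \in \calc^{(1, 1)} \otimes_\subalg \calc$, and push $p$ to the right across all tensor legs using the degree-two relations of \cref{lem:degree-two-relations} for the $(1, 1)$-form factor and \eqref{eq:right-module-relations} for the trailing $\calc$-factor. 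The terms containing $\metMP$ and $\metPM$ are handled using centrality of the quantum metric and the Kähler and compatibility identities \eqref{eq:kahler-property}, \eqref{eq:metric-compatibility}.

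The main obstacle is the degree-two bookkeeping on the right-hand side: whereas the right action on $\calc$ itself is encoded cleanly by $\opT$, moving $p$ through $\calc^{(1, 1)} \otimes_\subalg \calc$ requires the mixed degree-two relations together with repeated use of the quadratic and evaluation identities for $\opS$ and $\opSt$ recalled in \cref{sec:properties-S}. It is precisely here that the non-module contributions to $\conn$ — the $\metMP$-type correction terms — must be shown to cancel against the correction produced on the left-hand side, and matching them is the crux of the argument.
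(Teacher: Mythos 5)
Your proposal follows essentially the same route as the paper's proof: reduce right-linearity to the two generator identities $\curv(\del p\, p) = \curv(\del p)\, p$ and $\curv(\delbar p\, p) = \curv(\delbar p)\, p$ (your cocycle identity for $\Phi$ and the reformulation via $\mathcal{D}$ are both correct), convert the left-hand sides via the right-module relations \eqref{eq:right-module-relations} together with linearity and left $\subalg$-linearity of $\curv$ so that $\opT_{1 2 3 4}$ passes through, and then push $p$ to the right through the explicit expressions of \cref{lem:riemann-minus} and \cref{lem:riemann-plus}, absorbing the metric terms by centrality. The only real difference is in the bookkeeping you leave as "the crux": the paper carries out that cancellation not with the degree-two relations of \cref{lem:degree-two-relations} but with the commutation identity $\EV_{2 3} \opT_{3 4 5 6} \opT_{1 2 3 4} = \opT_{1 2 3 4} \EV_{4 5}$ and the braid equation for $\opT$, applied leg by leg to the wedge factors.
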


\begin{proof}
We only need to show that it is a right $\subalg$-bimodule map.
We are going to make extensive use of the identity $\EV_{2 3} \opT_{3 4 5 6} \opT_{1 2 3 4} = \opT_{1 2 3 4} \EV_{4 5}$, see for instance \cite[Lemma C.1]{paper-projective}.

\textit{Case $\delbar p$.}
Using the expression from \cref{lem:riemann-minus}, the right $\subalg$-module relations \eqref{eq:right-module-relations} and the fact that $\curv$ is a left $\subalg$-module map, we compute
\[
\begin{split}
\curv(\delbar p p) & = q^{-(\alpha_s, \alpha_s)} \opT_{1 2 3 4} \curv(p \delbar p) = q^{-(\alpha_s, \alpha_s)} \opT_{1 2 3 4} p \curv(\delbar p) \\
& = - q^{-(\alpha_s, \alpha_s)} \opT_{1 2 3 4} \EV_{4 5} \EV_{4 5} p \delbar p \wedge \del p \otimes \delbar p - q^{-(\omega_s, 2 \rho)} q^{-(\alpha_s, \alpha_s)} \opT_{1 2 3 4} p \delbar p \wedge \metPM.
\end{split}
\]
Consider $A_1 = \opT_{1 2 3 4} \EV_{4 5} \EV_{4 5} p \delbar p \wedge \del p \otimes \delbar p$. Using $\opT_{1 2 3 4} \EV_{4 5} = \EV_{2 3} \opT_{3 4 5 6} \opT_{1 2 3 4}$ we get
\[
\begin{split}
A_1 & = \EV_{2 3} \opT_{3 4 5 6} \opT_{1 2 3 4} \EV_{4 5} p \delbar p \wedge \del p \otimes \delbar p \\
& = \EV_{2 3} \opT_{3 4 5 6} \EV_{2 3} \opT_{3 4 5 6} \opT_{1 2 3 4} p \delbar p \wedge \del p \otimes \delbar p.
\end{split}
\]
Then using the right $\subalg$-module relations \eqref{eq:right-module-relations} we obtain
\[
\begin{split}
A_1 & = q^{(\alpha_s, \alpha_s)} \EV_{2 3} \opT_{3 4 5 6} \EV_{2 3} \opT_{5 6 7 8}^{-1} \delbar p \wedge \del p \otimes \delbar p p \\
& = q^{(\alpha_s, \alpha_s)} \EV_{2 3} \EV_{2 3} \delbar p \wedge \del p \otimes \delbar p p.
\end{split}
\]
On the other hand for $A_2 = \opT_{1 2 3 4} p \delbar p \wedge \metPM$ we compute
\[
A_2 = q^{(\alpha_s, \alpha_s)} \delbar p p \wedge \metPM
= q^{(\alpha_s, \alpha_s)} \delbar p \wedge \metPM p.
\]
Here we used that $\metPM$ is central. Then we obtain
\[
\curv(\delbar p p) = - \EV_{2 3} \EV_{2 3} \delbar p \wedge \del p \otimes \delbar p p - q^{-(\omega_s, 2 \rho)} \delbar p \wedge \metPM p = \curv(\delbar p) p.
\]

\textit{Case $\del p$.}
Similarly, using the expression from \cref{lem:riemann-plus} we compute
\[
\begin{split}
\curv(\del p p) & = q^{(\alpha_s, \alpha_s)} \opT_{1 2 3 4} \curv(p \del p) = q^{(\alpha_s, \alpha_s)} \opT_{1 2 3 4} p \curv(\del p) \\
& = q^{2 (\alpha_s, \alpha_s)} \opT_{1 2 3 4} \EV_{4 5} \opT_{3 4 5 6} \EV_{4 5} p \del p \wedge \delbar p \otimes \del p \\
& + q^{2 (\alpha_s, \alpha_s)} \opT_{1 2 3 4} \EV_{4 5} \opT_{3 4 5 6} \EV_{4 5} p \delbar p \wedge \del p \otimes \del p \\
& - q^{2 (\alpha_s, \alpha_s)} q^{-(\omega_s, 2 \rho)} \opT_{1 2 3 4} p \del p \wedge \metMP.
\end{split}
\]
Let us write this expression in the form
\[
\curv(\del p p) = q^{2 (\alpha_s, \alpha_s)} B_1 + q^{2 (\alpha_s, \alpha_s)} B_2 - q^{2 (\alpha_s, \alpha_s)} q^{-(\omega_s, 2 \rho)} B_3.
\]
For the term $B_1$ we compute
\[
\begin{split}
B_1 & = \opT_{1 2 3 4} \EV_{4 5} \opT_{3 4 5 6} \EV_{4 5} p \del p \wedge \delbar p \otimes \del p \\
& = \EV_{2 3} \opT_{3 4 5 6} \opT_{1 2 3 4} \opT_{3 4 5 6} \EV_{4 5} p \del p \wedge \delbar p \otimes \del p.
\end{split}
\]
To proceed we use the "braid equation" $\opT_{3 4 5 6} \opT_{1 2 3 4} \opT_{3 4 5 6} = \opT_{1 2 3 4} \opT_{3 4 5 6} \opT_{1 2 3 4}$, which can be easily derived from the properties listed in \cref{prop:S-properties}. We get
\[
\begin{split}
B_1 & = \EV_{2 3} \opT_{1 2 3 4} \opT_{3 4 5 6} \opT_{1 2 3 4} \EV_{4 5} p \del p \wedge \delbar p \otimes \del p \\
& = \EV_{2 3} \opT_{1 2 3 4} \opT_{3 4 5 6} \EV_{2 3} \opT_{3 4 5 6} \opT_{1 2 3 4} p \del p \wedge \delbar p \otimes \del p \\
& = q^{-(\alpha_s, \alpha_s)} \EV_{2 3} \opT_{1 2 3 4} \opT_{3 4 5 6} \EV_{2 3} \opT_{5 6 7 8}^{-1} \del p \wedge \delbar p \otimes \del p p \\
& = q^{-(\alpha_s, \alpha_s)} \EV_{2 3} \opT_{1 2 3 4} \EV_{2 3} \del p \wedge \delbar p \otimes \del p p.
\end{split}
\]
The computation for $B_2$ is essentially identical and leads to
\[
B_2 = q^{-(\alpha_s, \alpha_s)} \EV_{2 3} \opT_{1 2 3 4} \EV_{2 3} \delbar p \wedge \del p \otimes \del p p.
\]
Finally we easily get $B_3 = \del p \wedge \metMP p$. This leads to $\curv(\del p p) = \curv(\del p) p$.
\end{proof}

\subsection{Some consequences}

In this subsection we explore various consequences of the fact that $\curv$ is a $\subalg$-bimodule map.
First we consider the possibility of extending the generalized braiding $\sigma$ to a map $\calc \otimes_\subalg \Omega^n \to \Omega^n \otimes_\subalg \calc$, following \cite[Definition 4.10]{quantum-book}.

\begin{definition}
An $A$-bimodule $E$ with left bimodule connection $(\nabla_E, \sigma_E)$ is called \emph{extendable} if the generalized braiding $\sigma_E: E \otimes_A \Omega^1 \to \Omega^1 \otimes_A E$ extends to a map
\[
\sigma_E^{[n]}: E \otimes_A \Omega^n \to \Omega^n \otimes_A E
\]
for all $n \geq 1$, in such a way that on $E \otimes_A \Omega^m \otimes_A \Omega^n$ we have
\[
\sigma_E^{[m + n]} \circ (\id \otimes \wedge) = (\wedge \otimes \id) \circ (\id \otimes \sigma_E^{[n]}) \circ (\sigma_E^{[m]} \otimes \id).
\]
\end{definition}

It is easy to see that this extension is unique, provided it exists.

This notion of extendability features in the definition of a certain (DG) category ${}_A \mathcal{G}_A$, defined by Beggs and Majid in \cite[Section 4.1]{quantum-book}.
The objects of this category are triples $(E, \nabla_E, \sigma_E)$, where $E$ is an $A$-bimodule and $(\nabla_E, \sigma_E)$ is a left bimodule connection such that: 1) $\sigma_E$ is extendable; 2) the curvature $R_E$ is a bimodule map.

\begin{proposition}
\label{prop:sigma-extendable}
The generalized braiding $\sigma: \calc \otimes_\subalg \calc \to \calc \otimes_\subalg \calc$ is extendable.
\end{proposition}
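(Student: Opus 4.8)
The plan is to construct $\sigma^{[n]}$ at the level of the tensor algebra and then show that it descends to the exterior algebra, using the presentation from \cref{thm:quotient-symmetrizer}. Writing $\sigma_{i, i+1}$ for the $\subalg$-bimodule map obtained by applying $\sigma$ to the $i$-th and $(i+1)$-th legs of a tensor power of $\calc$, I would take as candidate extension on $\calc \otimes_\subalg \calc^{\otimes n}$ the iterated braiding
\[
\Sigma_n := \sigma_{n, n+1} \circ \cdots \circ \sigma_{2, 3} \circ \sigma_{1, 2},
\]
which moves the distinguished first leg past the remaining $n$ legs. Each $\sigma_{i, i+1}$ is a $\subalg$-bimodule map, hence so is $\Sigma_n$, and it is balanced over $\subalg$ as required. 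The factorization of $\Sigma_{m + n}$ into first moving past $m$ legs and then past the remaining $n$ holds by construction, so the compatibility identity $\sigma^{[m + n]} \circ (\id \otimes \wedge) = (\wedge \otimes \id) \circ (\id \otimes \sigma^{[n]}) \circ (\sigma^{[m]} \otimes \id)$ becomes automatic once we know that $\Sigma_n$ descends along the quotient maps $\wedge$. Since the excerpt already notes that the extension is unique when it exists, it suffices to exhibit this one.

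Hence the whole content is the descent of $\Sigma_n$ to a map $\calc \otimes_\subalg \calc^n \to \calc^n \otimes_\subalg \calc$. By \cref{thm:quotient-symmetrizer} we have $\calc^n = \calc^{\otimes n} / I_n$, where $I_n$ is generated by inserting $\ker(\wedge) = \mathrm{im}(\id + \sigma)$ into adjacent legs. Because $\Sigma_n$ acts by transposing the moving leg past one factor at a time, a relation sitting in legs not currently being crossed is simply carried along unchanged, and the only nontrivial interaction occurs when the moving leg passes through a single relation block. Thus descent reduces, by an induction on $n$, to the three-leg statement that $\sigma_{2, 3} \circ \sigma_{1, 2}$ sends $\calc \otimes_\subalg \ker(\wedge)$ into $\ker(\wedge) \otimes_\subalg \calc$, i.e.
\[
(\wedge \otimes \id) \circ \sigma_{2, 3} \circ \sigma_{1, 2} = 0 \quad \text{on } \calc \otimes_\subalg \ker(\wedge).
\]
Writing an element of $\calc \otimes_\subalg \ker(\wedge)$ as $(\id + \sigma_{2, 3})(x)$, and using the torsion compatibility $\wedge \circ \sigma = - \wedge$ from \cref{cor:compatibility-wedge}, which gives $(\wedge \otimes \id) \circ \sigma_{1, 2} = - (\wedge \otimes \id)$, the claim follows as soon as $\sigma$ obeys the braid relation $\sigma_{1, 2} \sigma_{2, 3} \sigma_{1, 2} = \sigma_{2, 3} \sigma_{1, 2} \sigma_{2, 3}$ modulo terms killed by $\wedge \otimes \id$: expanding $(\wedge \otimes \id)\, \sigma_{2, 3} \sigma_{1, 2} (\id + \sigma_{2, 3})$, substituting the braid relation in the second summand, and applying $\wedge \circ \sigma = - \wedge$, the two resulting terms cancel.

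The main obstacle is therefore to establish this braid relation up to $\ker(\wedge)$-terms, and I would do this component by component along $\calc = \calcP \oplus \calcM$. On the purely holomorphic and purely antiholomorphic triples the braiding equals, up to a scalar, the operator $\opT$ with no metric correction, so the exact braid equation $\opT_{3 4 5 6} \opT_{1 2 3 4} \opT_{3 4 5 6} = \opT_{1 2 3 4} \opT_{3 4 5 6} \opT_{1 2 3 4}$ from \cref{prop:S-properties}, already used in \cref{lem:riemann-plus}, applies directly. On the mixed triples the formulas for $\sigmaPM$ and $\sigmaMP$ in \eqref{eq:generalized-braiding} carry correction terms proportional to $p \met p$; since the metric is symmetric, i.e. $\wedge(\met) = 0$, each such correction lies in $\ker(\wedge)$ in the relevant legs and is annihilated by $\wedge \otimes \id$. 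Collecting these cases yields the braid relation modulo $\ker(\wedge)$, which is exactly what the cancellation above needs. This proves the three-leg identity, hence the descent of $\Sigma_n$, and with it the extendability of $\sigma$.
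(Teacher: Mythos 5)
Your overall strategy is genuinely different from the paper's, and unfortunately it has a gap at its crucial step. For context: the paper does not construct $\sigma^{[n]}$ by hand at all. It invokes \cite[Lemma 4.14]{quantum-book}, which states that over the universal differential calculus (the maximal prolongation) of a FODC, any left bimodule connection whose curvature is a bimodule map automatically has extendable generalized braiding; since $\calc^\bullet$ is the universal calculus of $\calc$, all of the real work is carried by \cref{thm:R-bimodule}. That said, parts of your construction are fine: the reduction of the descent of $\Sigma_n$ to the three-leg identity $(\wedge \otimes \id) \circ \sigma_{2,3} \circ \sigma_{1,2} \circ (\id + \sigma_{2,3}) = 0$ is sound, because $\ker(\wedge) = \mathrm{im}(\id + \sigma)$ by \cref{thm:quotient-symmetrizer}, the moving leg interacts with only one relation block at a time, and right-exactness of $\otimes_\subalg$ identifies $\ker(\wedge \otimes \id)$ with the image of $\ker(\wedge) \otimes_\subalg \calc$. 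The cancellation argument deducing this identity from the braid relation modulo $\ker(\wedge \otimes \id)$, together with \cref{cor:compatibility-wedge}, is also correct.

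The gap is your claim that the braid relation modulo $\ker(\wedge \otimes \id)$ holds on mixed triples because the correction terms in \eqref{eq:generalized-braiding} are ``proportional to $p \met p$'' and hence annihilated by $\wedge \otimes \id$. This does not work as stated. In the composite $\sigma_{2,3}\,\sigma_{1,2}\,\sigma_{2,3}$, the corrections produced by the first two factors are braided further by the remaining factors before $\wedge \otimes \id$ is ever applied, so their $\met$-content no longer sits in the two legs being wedged; and the corrections produced by the final $\sigma_{2,3}$ sit in legs $2,3$ while the wedge acts on legs $1,2$. In none of these cases does $\wedge(\met) = 0$ apply. At best the correction terms cancel between the two sides of the braid relation, and verifying that cancellation is a genuine computation of the same order of difficulty as the paper's proof of \cref{thm:R-bimodule}, which you have not carried out. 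Note also that the logical order in the paper is the reverse of yours: the identity $(\wedge \otimes \id) \circ \sigma_1 \circ \sigma_2 \circ \sigma_1 = (\wedge \otimes \id) \circ \sigma_2 \circ \sigma_1 \circ \sigma_2$ is \emph{derived from} extendability (via $\sigma^{[2]} \circ (\id \otimes \wedge) = (\wedge \otimes \id) \circ \sigma_2 \circ \sigma_1$), and the paper explicitly flags the braid relation on mixed components as the difficult part, leaving its exact form open. As it stands, your argument assumes essentially what is to be proven.
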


\begin{proof}
We make use of the following result (\cite[Lemma 4.14]{quantum-book}, taking into account that the universal differential calculus is called maximal prolongation there): provided we use the universal differential calculus of a given FODC, if $(E, \nabla_E, \sigma_E)$ is a left bimodule connection with $R_E$ being a bimodule map, then $\sigma_E$ is automatically extendable.

In our case we are considering the Heckenberger-Kolb calculus $\calc^\bullet$, which is the universal differential calculus of the FODC $\calc$.
Since $(\conn, \sigma)$ is a left bimodule connection and $\curv$ is a bimodule map by \cref{thm:R-bimodule}, we obtain the result.
\end{proof}

Therefore $(\calc, \conn, \sigma)$ is an object in the category ${}_\subalg \mathcal{G}_\subalg$ mentioned above.
One consequence of this fact is the following identity, which in the classical case amounts to the \emph{antisymmetry} property $R_{i j k l} = - R_{j i k l}$ for the components of the Riemann tensor.

\begin{corollary}
We have the identity
\[
(\curv \otimes \id + (\sigma^{[2]} \otimes \id) \circ (\id \otimes \curv)) (\met) = 0.
\]
\end{corollary}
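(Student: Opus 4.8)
The plan is to recognise the operator in the statement as the curvature of the tensor product connection induced by $(\conn, \sigma)$ on $\calc \otimes_\subalg \calc$, and then to use the fact that the quantum metric $\met$ is a \emph{parallel} section of this module to conclude that its curvature vanishes.

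First I would recall that a bimodule connection $(\conn, \sigma)$ on $\calc$ induces a canonical connection on $\calc \otimes_\subalg \calc$, namely
\[
\conn^{(2)} := \conn \otimes \id + (\sigma \otimes \id) \circ (\id \otimes \conn).
\]
The key preliminary step is to show that $\conn^{(2)}(\met) = 0$. This is immediate from the strong metric compatibility relations \eqref{eq:metric-compatibility}: applying them to the components of $\met = \metPM + \metMP$ gives $(\conn \otimes \id)(\met) = 0$ and $(\id \otimes \conn)(\met) = 0$, and hence $\conn^{(2)}(\met) = (\conn \otimes \id)(\met) + (\sigma \otimes \id)(\id \otimes \conn)(\met) = 0$. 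Thus $\met$ is parallel.

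Next I would identify the displayed operator with the curvature of $\conn^{(2)}$. By \cref{prop:sigma-extendable} the braiding $\sigma$ is extendable and by \cref{thm:R-bimodule} the Riemann tensor $\curv$ is a bimodule map, so $(\calc, \conn, \sigma)$ is an object of the category ${}_\subalg \mathcal{G}_\subalg$ of \cite[Section 4.1]{quantum-book}. This category is monoidal, and the curvature of the tensor product of two of its objects is given by the additive rule $R_{E \otimes F} = R_E \otimes \id + (\sigma_E^{[2]} \otimes \id)(\id \otimes R_F)$; specialising to $E = F = \calc$ yields
\[
R_{\conn^{(2)}} = \curv \otimes \id + (\sigma^{[2]} \otimes \id) \circ (\id \otimes \curv),
\]
which is precisely the operator in the statement. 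Since the curvature of any left connection is $R = (\diff \otimes \id - (\wedge \otimes \id)(\id \otimes \conn^{(2)})) \circ \conn^{(2)}$ and therefore annihilates every parallel section, the already established identity $\conn^{(2)}(\met) = 0$ forces $R_{\conn^{(2)}}(\met) = 0$, which is the claim.

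The main obstacle is justifying the additivity formula for $R_{\conn^{(2)}}$: one must check that no extra cross terms survive and that the two-form produced by $\curv$ in the second summand is transported past the first leg by the \emph{extension} $\sigma^{[2]}$ rather than by $\sigma$ alone. This is exactly where the two defining conditions of ${}_\subalg \mathcal{G}_\subalg$ are used—extendability of $\sigma$ produces the required map $\sigma^{[2]}: \calc \otimes_\subalg \calc^2 \to \calc^2 \otimes_\subalg \calc$, while $\curv$ being a bimodule map keeps all intermediate expressions well defined over $\subalg$. I would either invoke the monoidal structure of ${}_\subalg \mathcal{G}_\subalg$ directly from \cite{quantum-book}, or, for a self-contained argument, expand $R_{\conn^{(2)}}$ from its definition and reorganise the terms using the extendability relation $\sigma^{[2]} \circ (\id \otimes \wedge) = (\wedge \otimes \id)(\id \otimes \sigma)(\sigma \otimes \id)$ together with $\wedge \circ (\id + \sigma) = 0$ from \cref{cor:compatibility-wedge} to kill the remaining terms.
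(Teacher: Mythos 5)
Your proposal is correct and follows essentially the same route as the paper: the paper simply cites \cite[Corollary 4.16]{quantum-book} after checking that $(\calc, \conn, \sigma)$ is an object of ${}_\subalg \mathcal{G}_\subalg$ (via \cref{thm:R-bimodule} and \cref{prop:sigma-extendable}) and that $\conn$ is compatible with $\met$, which are exactly the hypotheses you verify. The only difference is that you additionally unfold the internal mechanism of that corollary --- identifying the operator as the curvature of the tensor product connection, observing that $\met$ is parallel by \eqref{eq:metric-compatibility}, and noting that curvature factors through the connection --- whereas the paper treats it as a black box.
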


\begin{proof}
This follows from \cite[Corollary 4.16]{quantum-book}, which gives the stated result provided that $(\calc, \conn, \sigma)$ is an object in the category ${}_\subalg \mathcal{G}_\subalg$ and that the connection $\conn$ is compatible with the quantum metric $\met$, that is $(\conn \otimes \id + (\sigma \otimes \id) \circ (\id \otimes \conn)) (g) = 0$.
Compatibility of $\conn$ with $g$ for the quantum projective spaces was proven in \cite[Theorem 8.4]{paper-projective}.
\end{proof}

Finally we come back to the generalized braiding $\sigma$.
We show that it satisfies the braid equation possibly up to symmetric terms, in the following sense.

\begin{proposition}
The generalized braiding $\sigma: \calc \otimes_\subalg \calc \to \calc \otimes_\subalg \calc$ satisfies
\[
(\wedge \otimes \id) \circ \sigma_1 \circ \sigma_2 \circ \sigma_1 = (\wedge \otimes \id) \circ \sigma_2 \circ \sigma_1 \circ \sigma_2
\]
where we use the standard leg-notation $\sigma_1 = \sigma \otimes \id$ and $\sigma_2 = \id \otimes \sigma$. 
\end{proposition}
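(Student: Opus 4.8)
The plan is to reduce both sides of the desired identity to one and the same expression, using two ingredients established above: the extendability of $\sigma$ (\cref{prop:sigma-extendable}) and the relation $\wedge \circ \sigma = -\wedge$ (\cref{cor:compatibility-wedge}). All maps below are well-defined $\subalg$-bimodule maps on the balanced triple tensor product $\calc \otimes_\subalg \calc \otimes_\subalg \calc$, since $\sigma$ and $\wedge$ are bimodule maps.

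First I would record the lowest-level consequence of extendability. Writing $\sigma^{[2]}: \calc \otimes_\subalg \calc^2 \to \calc^2 \otimes_\subalg \calc$ for the extension and noting $\sigma^{[1]} = \sigma$, the defining relation of \cref{prop:sigma-extendable} with $m = n = 1$ becomes, after recognizing $(\id \otimes \sigma) \circ (\sigma \otimes \id) = \sigma_2 \circ \sigma_1$,
\[
(\wedge \otimes \id) \circ \sigma_2 \circ \sigma_1 = \sigma^{[2]} \circ (\id \otimes \wedge).
\]
This is the key structural input: it converts a double braiding followed by $\wedge \otimes \id$ into a single $\sigma^{[2]}$ preceded by $\id \otimes \wedge$.

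Next I would use \cref{cor:compatibility-wedge} in the two elementary forms $(\wedge \otimes \id) \circ \sigma_1 = -(\wedge \otimes \id)$ and $(\id \otimes \wedge) \circ \sigma_2 = -(\id \otimes \wedge)$, each obtained by applying $\wedge \circ \sigma = -\wedge$ to the relevant pair of legs. For the left-hand side I would then compute, peeling off the outer $\sigma_1$ first and then invoking extendability,
\[
(\wedge \otimes \id) \circ \sigma_1 \circ \sigma_2 \circ \sigma_1 = -(\wedge \otimes \id) \circ \sigma_2 \circ \sigma_1 = -\sigma^{[2]} \circ (\id \otimes \wedge).
\]
For the right-hand side I would instead apply extendability to the first two factors first and then absorb the remaining $\sigma_2$,
\[
(\wedge \otimes \id) \circ \sigma_2 \circ \sigma_1 \circ \sigma_2 = \sigma^{[2]} \circ (\id \otimes \wedge) \circ \sigma_2 = -\sigma^{[2]} \circ (\id \otimes \wedge).
\]
Since both sides reduce to $-\sigma^{[2]} \circ (\id \otimes \wedge)$, the identity follows.

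The manipulations themselves are short once the right inputs are assembled, so there is no serious computational obstacle; the genuine content has already been packaged into \cref{prop:sigma-extendable} and \cref{cor:compatibility-wedge}. The main point I would want to verify carefully is the bookkeeping of legs, namely that the extendability relation is applied with the correct grouping of the middle two factors and that each composite lands in $\calc^2 \otimes_\subalg \calc$ as intended.
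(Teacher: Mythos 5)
Your proposal is correct and follows essentially the same route as the paper: both rest on the extendability identity $\sigma^{[2]} \circ (\id \otimes \wedge) = (\wedge \otimes \id) \circ \sigma_2 \circ \sigma_1$ from \cref{prop:sigma-extendable} together with $\wedge \circ \sigma = -\wedge$ from \cref{cor:compatibility-wedge}. The only cosmetic difference is that you reduce both sides to the common expression $-\sigma^{[2]} \circ (\id \otimes \wedge)$, whereas the paper chains the same equalities into a single string; the leg bookkeeping in your argument is accurate throughout.
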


\begin{proof}
Since $\sigma$ is extendable by \cref{prop:sigma-extendable}, we have the identity
\[
\sigma^{[2]} \circ (\id \otimes \wedge) = (\wedge \otimes \id) \circ \sigma_2 \circ \sigma_1.
\]
We have $\wedge \circ (\id + \sigma) = 0$ by \cref{cor:compatibility-wedge}. This leads to
\[
- (\wedge \otimes \id) \circ \sigma_2 \circ \sigma_1 = (\wedge \otimes \id) \circ \sigma_2 \circ \sigma_1 \circ \sigma_2.
\]
Finally using $- \wedge = \wedge \circ \sigma$ on the left-hand side gives the result.
\end{proof}

\begin{remark}
We leave open the question of whether $\sigma$ actually satisfies the braid equation.
It is easy to show that $\sigma_1 \circ \sigma_2 \circ \sigma_1 = \sigma_2 \circ \sigma_1 \circ \sigma_2$ holds when acting on $\calcP \otimes \calcP \otimes \calcP$ or $\calcM \otimes \calcM \otimes \calcM$, hence the difficult part is to show that it holds for the mixed terms.
\end{remark}

\section{Ricci tensor}
\label{sec:ricci-tensor}

In this section we study the Ricci tensor, corresponding to the quantum metric $\met$ and the quantum Levi-Civita connection $\nabla$.
Its definition depends on the auxiliary choice of a splitting map $\splitmap$ for the wedge product, for which we use the one-parameter family introduced in \cref{prop:split-map}.
We are going to show that this free parameter can be fixed by requiring the Ricci tensor to be symmetric, that is $\wedge(\ricci) = 0$.
Moreover, with this choice the Ricci tensor turns out to be proportional to the quantum metric $\met$.
This gives a quantum analogue of the \emph{Einstein condition} for the quantum Fubini-Study metric.

\subsection{Definition and computation}

We define the Ricci tensor as an appropriate "contraction" of the Riemann tensor, following \cite[Section 8.1]{quantum-book}.

\begin{definition}
The \emph{Ricci tensor} is the element $\ricci \in \calc \otimes_\subalg \calc$ defined by
\[
\ricci := ((\cdot, \cdot) \otimes \id \otimes \id) \circ (\id \otimes \splitmap \otimes \id) \circ (\id \otimes \curv) (\met).
\]
\end{definition}

Here $\imet$ is the inverse of the quantum metric $\met$, while $\splitmap$ is a splitting map for the wedge product.
In the classical case, with $\splitmap$ being the antisymmetrization map, it corresponds to the usual Ricci tensor up to an overall factor (see \cite[Example 8.10]{quantum-book}).

In our case, we are going to use the splitting map $\splitmap$ introduced in \cref{prop:split-map}.
This is actually a one-parameter family of splitting maps, since we have $c_{+ -} + c_{- +} = 1$ and
\[
\begin{split}
\splitmap(\del p \wedge \delbar p) & = (c_{+ -} \id - c_{- +} \sigmaPM) (\del p \otimes \delbar p), \\
\splitmap(\delbar p \wedge \del p) & = (c_{- +} \id - c_{+ -} \sigmaMP) (\delbar p \otimes \del p).
\end{split}
\]

Corresponding to the decomposition $\met = \metPM + \metMP$, we also write
\[
\ricci = \ricciPM + \ricciMP.
\]
In the following we are going to explicitly compute these two components of the Ricci tensor.
We begin with $\ricciPM$, which is technically easier.

\begin{lemma}
\label{lem:ricciPM}
We have
\[
\ricciPM = - \cMP q^{-2 (\omega_s, 2 \rho)} \qdim(V) \metPM.
\]
\end{lemma}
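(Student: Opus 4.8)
The plan is to expand $\ricciPM$ straight from the definition of the Ricci tensor,
\[
\ricciPM = (\imet \otimes \id \otimes \id) \circ (\id \otimes \splitmap \otimes \id) \circ (\id \otimes \curv)(\metPM),
\]
exploiting that the second leg of $\metPM = \EVp_{1 2} \EV_{2 3} \del p \otimes \delbar p$ is of antiholomorphic type. First I would apply $\curv$ to this second leg by means of \cref{lem:riemann-minus}, which writes $\curv(\delbar p)$ as a sum of two terms, each of the form $\omega \otimes \delbar p$ with $\omega$ a $(1,1)$-form built from $\delbar p \wedge \del p$. Feeding these into $\id \otimes \splitmap \otimes \id$, I would then use the explicit expression $\splitmap(\delbar p \wedge \del p) = (\cMP \id - \cPM \sigmaMP)(\delbar p \otimes \del p)$ from \cref{prop:split-map}.

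The key simplification is that the inverse metric subsequently contracts the holomorphic first leg of $\metPM$ against the first output of $\splitmap$. Since $(\del p, \del p) = 0$, the contribution coming from $\sigmaMP$ — whose first output is holomorphic — is annihilated, while the $\id$ part survives; hence all dependence on $\sigma$ drops out and only the coefficient $\cMP$ remains. For the second term $- q^{-(\omega_s, 2 \rho)} \delbar p \wedge \metPM$ of $\curv(\delbar p)$, the outer categorical contractions $\EVp_{1 2} \EV_{2 3}$ defining $\metPM$ are left intact, so after applying $\splitmap$ the inverse metric sees precisely the pairing of the two legs of the outer $\metPM$. This yields the clean contribution $- \cMP q^{-(\omega_s, 2 \rho)} \Trq(\metPM) \metPM$, where $\Trq(\metPM) = q^{-(\omega_s, 2 \rho)} \qdim(V) - 1$ by \cref{prop:quantum-trace}.

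It then remains to evaluate the contribution of the first term $- \EV_{2 3} \EV_{2 3} \delbar p \wedge \del p \otimes \delbar p$, and this is where I expect the main obstacle to lie. Here the internal evaluations $\EV_{2 3} \EV_{2 3}$ entangle one categorical leg of the outer second leg with the freshly generated $\del p$, so the inverse metric no longer sees $\metPM$ directly and the contraction does not collapse to $\Trq(\metPM)$. After discarding the $\sigmaMP$ part as above, one is left with an explicit categorical contraction involving the surviving $\delbar p \otimes \del p$, the leftover evaluations, and the inverse metric from \eqref{eq:inverse-metric}; carrying it out demands careful leg bookkeeping together with the duality relations \eqref{eq:duality} and the identities \eqref{eq:algebra-relations} and \eqref{eq:evaluations-calculi}. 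I expect this term to reduce to $- \cMP q^{-(\omega_s, 2 \rho)} \metPM$, so that upon adding the two contributions the terms linear in $q^{-(\omega_s, 2 \rho)}$ cancel and one obtains $\ricciPM = - \cMP q^{-2 (\omega_s, 2 \rho)} \qdim(V) \metPM$, as claimed.
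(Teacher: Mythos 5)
Your proposal is correct and takes essentially the same route as the paper's proof: the same expansion of $(\id \otimes \curv)(\metPM)$ via \cref{lem:riemann-minus}, the same key observation that $(\del p, \del p) = 0$ annihilates the $\sigmaMP$ part of the splitting so that only the coefficient $\cMP$ survives, the same identification of the contribution of $-q^{-(\omega_s, 2\rho)}\delbar p \wedge \metPM$ as $-\cMP q^{-(\omega_s, 2\rho)} \Trq(\metPM) \metPM$, and the same final cancellation of the terms linear in $q^{-(\omega_s, 2\rho)}$. The one step you leave as a prediction — that the term coming from $-\EV_{2 3}\EV_{2 3}\,\delbar p \wedge \del p \otimes \delbar p$ contributes $-\cMP q^{-(\omega_s, 2\rho)}\metPM$ — is carried out in the paper exactly along the lines you indicate: after substituting \eqref{eq:inverse-metric}, the $p p$ part vanishes by the projection relation \eqref{eq:projection-relation} together with $\EV_{2 3}\, p\, \del p = 0$ from \eqref{eq:evaluations-calculi}, while the $\opS_{1 2 3}\CV_3$ part collapses through the duality relations \eqref{eq:duality}, the right-module relation \eqref{eq:right-module-relations} and $\EV_{2 3}\, \del p\, p = \del p$ to $q^{-(\omega_s, 2\rho)}\metPM$, confirming your expected value and hence the claimed formula.
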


\begin{proof}
The component $\ricciPM$ is defined by
\[
\ricciPM = ((\cdot, \cdot) \otimes \id \otimes \id) \circ (\id \otimes \splitmap \otimes \id) \circ (\id \otimes \curv) (\metPM).
\]
Using the expression for $\curv(\delbar p)$ from \cref{lem:riemann-minus} we compute
\[
\begin{split}
(\id \otimes \curv) (\metPM)
& = \EVp_{1 2} \EV_{2 3} (\id \otimes R_\nabla) (\del p \otimes \delbar p) \\
& = - \EVp_{1 2} \EV_{2 3} \EV_{4 5} \EV_{4 5} \del p \otimes \delbar p \wedge \del p \otimes \delbar p \\
& - q^{-(\omega_s, 2 \rho)} \EVp_{1 2} \EV_{2 3} \EVp_{5 6} \EV_{6 7} \del p \otimes \delbar p \wedge \del p \otimes \delbar p.
\end{split}
\]
Now consider the splitting map $\splitmap(\delbar p \wedge \del p) = (\cMP \id - \cPM \sigmaMP)(\delbar p \otimes \del p)$ from \cref{prop:split-map}.
Taking into account that $(\del p, \del p) = 0$ we obtain
\[
((\cdot, \cdot) \otimes \id) \circ (\id \otimes \splitmap) (\del p \otimes \delbar p \wedge \del p)
= \cMP (\del p, \delbar p) \del p.
\]
Using this result in the computation of $\ricciPM$ we obtain
\[
\begin{split}
\ricciPM
& = - \cMP \EVp_{1 2} \EV_{2 3} \EV_{4 5} \EV_{4 5} (\del p, \delbar p) \del p \otimes \delbar p \\
& - \cMP q^{-(\omega_s, 2 \rho)} \EVp_{1 2} \EV_{2 3} \EVp_{5 6} \EV_{6 7} (\del p, \delbar p) \del p \otimes \delbar p.
\end{split}
\]
Let us write this in the form $\ricciPM = - \cMP (A_1 + A_2)$.
We are now going to obtain simpler expressions for the terms $A_1$ and $A_2$.

\textit{The term $A_1$.}
Using the expression for inverse metric from \eqref{eq:inverse-metric} we write
\[
\begin{split}
A_1 & = \EVp_{1 2} \EV_{2 3} \EV_{4 5} \EV_{4 5} (\del p, \delbar p) \del p \otimes \delbar p \\
& = q^{(\alpha_s, \alpha_s)} q^{-(\omega_s, \omega_s + 2 \rho)} \EVp_{1 2} \EV_{2 3} \EV_{4 5} \EV_{4 5} \opS_{1 2 3} \CV_3 p \del p \otimes \delbar p \\
& - q^{(\alpha_s, \alpha_s)} q^{-(\omega_s, 2 \rho)} \EVp_{1 2} \EV_{2 3} \EV_{4 5} \EV_{4 5} p p \del p \otimes \delbar p.
\end{split}
\]
The second term vanishes, since using the algebraic relations we compute
\[
\EV_{2 3} \EV_{4 5} \EV_{4 5} p p \del p \otimes \delbar p = \EV_{2 3} \EV_{2 3} \EV_{2 3} p p \del p \otimes \delbar p
= \EV_{2 3} \EV_{2 3} p \del p \otimes \delbar p = 0.
\]
For the first term, using the duality relation \eqref{eq:duality} we get
\[
\EV_{4 5} \opS_{1 2 3} \CV_3 = \opS_{1 2 3} \EV_{4 5} \CV_3 = \opS_{1 2 3}.
\]
Using this and \eqref{eq:right-module-relations} we obtain
\[
\EV_{4 5} \opS_{1 2 3} \CV_3 p \del p \otimes \delbar p
= \opS_{1 2 3} p \del p \otimes \delbar p
= q^{(\omega_s, \omega_s) - (\alpha_s, \alpha_s)} \del p p \otimes \delbar p.
\]
Finally we obtain
\[
\begin{split}
& \EVp_{1 2} \EV_{2 3} \EV_{4 5} \EV_{4 5} \opS_{1 2 3} \CV_3 p \del p \otimes \delbar p \\
& = q^{(\omega_s, \omega_s) - (\alpha_s, \alpha_s)} \EVp_{1 2} \EV_{2 3} \EV_{4 5} \del p p \otimes \delbar p
= q^{(\omega_s, \omega_s) - (\alpha_s, \alpha_s)} \EVp_{1 2} \EV_{2 3} \EV_{2 3} \del p p \otimes \delbar p \\
& = q^{(\omega_s, \omega_s) - (\alpha_s, \alpha_s)} \EVp_{1 2} \EV_{2 3} \del p \otimes \delbar p
= q^{(\omega_s, \omega_s) - (\alpha_s, \alpha_s)} \metPM.
\end{split}
\]
From this we conclude that $A_1 = q^{-(\omega_s, 2 \rho)} \metPM$.

\textit{The term $A_2$.} Now consider the term
\[
A_2 = q^{-(\omega_s, 2 \rho)} \EVp_{1 2} \EV_{2 3} \EVp_{5 6} \EV_{6 7} (\del p, \delbar p) \del p \otimes \delbar p
= q^{-(\omega_s, 2 \rho)} \EVp_{1 2} \EV_{2 3} (\del p, \delbar p) \metPM.
\]
We have $\EVp_{1 2} \EV_{2 3} (\del p, \delbar p) = \Trq(\metPM)$. According to \cref{prop:quantum-trace} we have
\[
\Trq(\metPM) = q^{-(\omega_s, 2 \rho)} \qdim(V) - 1.
\]
Therefore we obtain
\[
A_2 = q^{-(\omega_s, 2 \rho)} (q^{-(\omega_s, 2 \rho)} \qdim(V) - 1) \metPM.
\]

\textit{The sum.}
Finally, since $\ricciPM = - \cMP (A_1 + A_2)$ we obtain
\[
\begin{split}
\ricciPM & = - \cMP \left( q^{-(\omega_s, 2 \rho)} + q^{-(\omega_s, 2 \rho)} (q^{-(\omega_s, 2 \rho)} \qdim(V) - 1) \right) \metPM \\
& = - \cMP q^{-2 (\omega_s, 2 \rho)} \qdim(V) \metPM. \qedhere
\end{split}
\]
\end{proof}

Next we compute $\ricciMP$, which is technically more involved.

\begin{lemma}
\label{lem:ricciMP}
We have
\[
\ricciMP = - \cPM q^{(\alpha_s, \alpha_s)} \qdim(V) \metMP.
\]
\end{lemma}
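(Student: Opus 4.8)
The plan is to mirror the proof of \cref{lem:ricciPM}, interchanging the roles of the holomorphic and antiholomorphic generators. Since $\metMP = \EVp_{1 2} \EV_{2 3} \delbar p \otimes \del p$ and $\curv$ is applied only to the second leg, the computation reduces to evaluating $\EVp_{1 2} \EV_{2 3} \delbar p \otimes \curv(\del p)$, then applying the splitting map $\splitmap$ to the resulting middle two-form and contracting the first two legs with the inverse metric. I would use the \emph{second} expression for $\curv(\del p)$ from \cref{lem:riemann-plus}: its advantage is that all four of its two-form parts are of the uniform type $\del p \wedge \delbar p$ (the two operator-dressed terms, the term $\wedge(\metPM)$, and the term $\del p \wedge \metMP$, whose leading wedge factor is again $\del p \wedge \delbar p$). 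This matches the branch $\splitmap(\del p \wedge \delbar p) = (\cPM \id - \cMP \sigmaPM)(\del p \otimes \delbar p)$ of the splitting map and so avoids having to reorder any $\delbar p \wedge \del p$ factor.

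The key simplification, as in \cref{lem:ricciPM}, is that the $\sigmaPM$ half of the splitting map drops out. The external factor lies in $\calcM$, while both the leading term and the $p \metMP p$ correction of $\sigmaPM(\del p \otimes \delbar p)$ have their first tensor leg in $\calcM$; since $\imet$ vanishes on $\calcM \otimes_\subalg \calcM$ (because $(\delbar p, \delbar p) = 0$ and $\imet$ is a bimodule map), these contributions vanish. Hence only the $\cPM \id$ part survives, pulling out an overall factor of $\cPM$ and replacing each two-form $\del p \wedge \delbar p$ by $\del p \otimes \delbar p$; the contraction of the first two legs then produces the inverse-metric pairing $(\delbar p, \del p)$, leaving a scalar multiple of $\metMP$ for each of the four terms.

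What remains is to evaluate those four scalar contractions. For the operator-dressed terms I would push the repeated $\EV_{2 3}$'s and the factors $\opT_{1 2 3 4}^{\pm 1}$, $\opS_{1 2 3}$ through one another using the evaluation identities \eqref{eq:S-evaluation}, the quadratic relations \eqref{eq:S-quadratic-relations}, the duality relations \eqref{eq:duality} and the right-module relations \eqref{eq:right-module-relations}, reducing each to a multiple of $\metMP$ just as the terms $A_1$ and $A_2$ were handled in \cref{lem:ricciPM}. The contribution from $\del p \wedge \metMP$ reproduces $\metMP$ directly, while the $\wedge(\metPM)$ term is first rewritten via metric symmetry $\wedge(\metPM) = -\wedge(\metMP)$ from \eqref{eq:metric-symmetric}. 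One of the contractions will produce the quantum trace $\Trq(\metMP) = q^{(\omega_s, 2 \rho)} \qdim(V) - 1$, which I substitute from \cref{prop:quantum-trace}. Collecting the four scalars and simplifying (using $(\omega_s, 2 \rho) = r$ and $(\alpha_s, \alpha_s) = 2$ where convenient) should yield the stated coefficient $- \cPM q^{(\alpha_s, \alpha_s)} \qdim(V)$.

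The main obstacle is the leg bookkeeping in the operator-dressed terms. Prepending the external $\delbar p$ coming from $\metMP$ shifts all leg indices, so one must ensure that each $\EV_{2 3}$, $\opT_{1 2 3 4}$ and $\opT_{1 2 3 4}^{-1}$ acts on the intended legs once the splitting map has been applied. The most delicate is the term carrying $\EV_{2 3} \opT_{1 2 3 4} \EV_{2 3} \opT_{1 2 3 4}^{-1}$, where the two evaluations and the $\opT^{\pm 1}$ pair must be commuted and cancelled in the correct order, using the braid-type identities recorded in \cref{prop:S-properties}. Once these reductions to $\metMP$ are performed correctly, the remaining scalar arithmetic is routine by comparison.
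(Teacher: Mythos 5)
Your outline reproduces the paper's proof almost step for step: the paper likewise uses the second expression for $\curv(\del p)$ from \cref{lem:riemann-plus} precisely because all of its two-form parts are of type $\del p \wedge \delbar p$; it then observes, exactly as you do, that since $\sigmaPM(\del p \otimes \delbar p)$ has its first leg in $\calcM$ and $(\delbar p, \delbar p) = 0$, only the $\cPM\, \id$ branch of the splitting map survives the contraction, giving $((\cdot,\cdot) \otimes \id)\circ(\id \otimes \splitmap)(\delbar p \otimes \del p \wedge \delbar p) = \cPM (\delbar p, \del p)\, \delbar p$; it reduces the four resulting terms to scalar multiples of $\metMP$ using \eqref{eq:S-evaluation}, \eqref{eq:S-quadratic-relations}, \eqref{eq:duality} and \eqref{eq:right-module-relations}; and the $\del p \wedge \metMP$ term produces $\Trq(\metMP) = q^{(\omega_s, 2\rho)}\qdim(V) - 1$, just as you say.

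One detail of your plan is a misstep, though not a fatal one: rewriting $\wedge(\metPM) = -\wedge(\metMP)$ works against your own stated strategy. That substitution turns the third term back into a $\delbar p \wedge \del p$ type two-form, so the relevant branch becomes $\splitmap(\delbar p \wedge \del p) = (\cMP\, \id - \cPM\, \sigmaMP)(\delbar p \otimes \del p)$; there it is the $\cMP\, \id$ part that is killed by $(\delbar p,\delbar p) = 0$, and you are left to contract the full $\sigmaMP$ expression (the $\opS^{-1}_{1 2 3}\opSt_{2 3 4}$-dressed term plus the $p\, \metPM\, p$ correction) — exactly the kind of reordering your choice of the second expression was meant to avoid. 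Since $\splitmap$ is well defined on $\calc^2$ both routes give the same answer, but the paper's treatment is simpler: keep the term as $\wedge(\metPM) \otimes \del p$ and contract directly, using $\EVp_{3 4}\EV_{4 5}(\delbar p, \del p)\, \delbar p = \delbar p$. Finally, be aware that what you defer as routine — the two operator-dressed contractions — is where the bulk of the paper's proof lies (including the non-obvious vanishing of the $\CVp$-contribution in its term $A_1$), and the stated coefficient only emerges because all the $q^{-(\omega_s, 2\rho)}$-proportional pieces cancel in the final sum, leaving $q^{(\alpha_s,\alpha_s)}\qdim(V)$.
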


\begin{proof}
Our goal is to compute the expression
\[
\ricciMP = \EVp_{1 2} \EV_{2 3} ((\cdot, \cdot) \otimes \id \otimes \id) \circ (\id \otimes \splitmap \otimes \id) (\delbar p \otimes \curv(\del p)).
\]
For $\curv(\del p)$ we use the second expression derived in \cref{lem:riemann-plus}, namely
\[
\begin{split}
\curv(\del p) & = q^{(\alpha_s, \alpha_s)} \EV_{2 3} \opT_{1 2 3 4} \EV_{2 3} \del p \wedge \delbar p \otimes \del p - \EV_{2 3} \opT_{1 2 3 4} \EV_{2 3} \opT_{1 2 3 4}^{-1} \del p \wedge \delbar p \otimes \del p \\
& - (1 - q^{-(\alpha_s, \alpha_s)}) q^{-(\omega_s, 2 \rho)} \wedge(\metPM) \otimes \del p - q^{(\alpha_s, \alpha_s)} q^{-(\omega_s, 2 \rho)} \del p \wedge \metMP.
\end{split}
\]
Now consider the splitting map $\splitmap(\del p \wedge \delbar p) = (\cPM \id - \cMP \sigmaPM)(\del p \otimes \delbar p)$ from \cref{prop:split-map}.
Taking into account that $(\delbar p, \delbar p) = 0$ we obtain
\[
((\cdot, \cdot) \otimes \id) \circ (\id \otimes \splitmap) (\delbar p \otimes \del p \wedge \delbar p)
= \cPM (\delbar p, \del p) \delbar p.
\]
Using this in our expression for $\ricciMP$ we get
\[
\begin{split}
\ricciMP & = \cPM q^{(\alpha_s, \alpha_s)} \EVp_{1 2} \EV_{2 3} \EV_{4 5} \opT_{3 4 5 6} \EV_{4 5} (\delbar p, \del p) \delbar p \otimes \del p \\
& - \cPM \EVp_{1 2} \EV_{2 3} \EV_{4 5} \opT_{3 4 5 6} \EV_{4 5} \opT_{3 4 5 6}^{-1} (\delbar p, \del p) \delbar p \otimes \del p \\
& - \cPM (1 - q^{-(\alpha_s, \alpha_s)}) q^{-(\omega_s, 2 \rho)} \EVp_{1 2} \EV_{2 3} \EVp_{3 4} \EV_{4 5} (\delbar p, \del p) \delbar p \otimes \del p \\
& - \cPM q^{(\alpha_s, \alpha_s)} q^{-(\omega_s, 2 \rho)} \EVp_{1 2} \EV_{2 3} (\delbar p, \del p) \metMP.
\end{split}
\]
With obvious notation, we can write the expression above as
\[
\ricciMP = - \cPM (A_1 + A_2 + A_3 + A_4).
\]

\textit{The term $A_1$.}
We begin with the term
\[
A_1 = - q^{(\alpha_s, \alpha_s)} \EVp_{1 2} \EV_{2 3} \EV_{4 5} \opT_{3 4 5 6} \EV_{4 5} (\delbar p, \del p) \delbar p \otimes \del p.
\]
Using the inverse metric from \eqref{eq:inverse-metric} and the relation \eqref{eq:evaluations-calculi} we compute
\[
\begin{split}
\EV_{4 5} (\delbar p, \del p) \delbar p
& = \EV_{4 5} \CVp_2 p \delbar p - q^{-(\omega_s, 2 \rho)} \EV_{4 5} p p \delbar p = \CVp_2 \EV_{2 3} p \delbar p - q^{-(\omega_s, 2 \rho)} p \delbar p \\
& = \CVp_2 \delbar p - q^{-(\omega_s, 2 \rho)} p \delbar p.
\end{split}
\]
This lets us rewrite $A_1$ in the form
\[
\begin{split}
A_1 & = - q^{(\alpha_s, \alpha_s)} \EVp_{1 2} \EV_{2 3} \EV_{4 5} \opT_{3 4 5 6} \CVp_2 \delbar p \otimes \del p \\
& + q^{(\alpha_s, \alpha_s)} q^{-(\omega_s, 2 \rho)} \EVp_{1 2} \EV_{2 3} \EV_{4 5} \opT_{3 4 5 6} p \delbar p \otimes \del p.
\end{split}
\]

Write $A_{1, 1} = \EVp_{1 2} \EV_{2 3} \EV_{4 5} \opT_{3 4 5 6} \CVp_2 \delbar p \otimes \del p$. Using the definition of $\opT_{3 4 5 6}$ we have
\[
A_{1, 1} = \EVp_{1 2} \EV_{2 3} \EV_{4 5} \opS_{3 4 5} \opSt_{4 5 6} \CVp_2 \delbar p \otimes \del p
= \EVp_{1 2} \EV_{2 3} \EV_{4 5} \opS_{3 4 5} \CVp_2 \opSt_{2 3 4} \delbar p \otimes \del p.
\]
Next, using the quadratic relation \eqref{eq:S-quadratic-relations} for $\opS_{3 4 5}$ and \eqref{eq:S-evaluation}, we compute
\[
\begin{split}
A_{1, 1} & = q^{2 (\omega_s, \omega_s) - (\alpha_s, \alpha_s)} \EVp_{1 2} \EV_{2 3} \EV_{4 5} \opS_{3 4 5}^{-1} \CVp_2 \opSt_{2 3 4} \delbar p \otimes \del p \\
& + (1 - q^{-(\alpha_s, \alpha_s)}) q^{(\omega_s, \omega_s)} \EVp_{1 2} \EV_{2 3} \EV_{4 5} \CVp_2 \opSt_{2 3 4} \delbar p \otimes \del p \\
& = q^{-(\omega_s, 2 \rho)} q^{(\omega_s, \omega_s) - (\alpha_s, \alpha_s)} \EVp_{1 2} \EV_{2 3} \EVp_{3 4} \CVp_2 \opSt_{2 3 4} \delbar p \otimes \del p \\
& + (1 - q^{-(\alpha_s, \alpha_s)}) q^{(\omega_s, \omega_s)} \EVp_{1 2} \EV_{2 3} \CVp_2 \EV_{2 3} \opSt_{2 3 4} \delbar p \otimes \del p.
\end{split}
\]
The second term vanishes after using \eqref{eq:S-evaluation}. For the first term we use the duality relation $\EVp_{3 4} \CVp_2 = \id$ from \eqref{eq:duality}.
Then making use of \eqref{eq:S-evaluation} again we obtain
\[
A_{1, 1} = q^{-(\omega_s, 2 \rho)} q^{(\omega_s, \omega_s) - (\alpha_s, \alpha_s)} \EVp_{1 2} \EV_{2 3} \opSt_{2 3 4} \delbar p \otimes \del p = 0.
\]

Now consider the term $A_{1, 2} = \EVp_{1 2} \EV_{2 3} \EV_{4 5} \opT_{3 4 5 6} p \delbar p \otimes \del p$.
Using the right $\subalg$-module relations \eqref{eq:right-module-relations} and the commutation relations between $\opS$ and $\opSt$ we obtain
\[
\begin{split}
A_{1, 2} & = q^{(\alpha_s, \alpha_s) - (\omega_s, \omega_s)} \EVp_{1 2} \EV_{2 3} \EV_{4 5} \opS_{3 4 5} \opSt_{4 5 6} \opSt_{2 3 4}^{-1} \delbar p p \otimes \del p \\
& = q^{(\alpha_s, \alpha_s) - (\omega_s, \omega_s)} \EVp_{1 2} \EV_{2 3} \EV_{4 5} \opSt_{4 5 6} \opSt_{2 3 4}^{-1} \opS_{3 4 5} \delbar p \otimes p \del p \\
& = \EVp_{1 2} \EV_{2 3} \EV_{4 5} \opSt_{4 5 6} \opSt_{2 3 4}^{-1} \delbar p \otimes \del p p.
\end{split}
\]
Next, using \eqref{eq:S-evaluation} and \eqref{eq:algebra-relations} leads to
\[
A_{1, 2} = q^{-(\omega_s, \omega_s + 2 \rho)} \EVp_{1 2} \EV_{2 3} \EVp_{5 6} \opSt_{2 3 4}^{-1} \delbar p \otimes \del p p
= q^{-(\omega_s, \omega_s)} \EVp_{1 2} \EV_{2 3} \opSt_{2 3 4}^{-1} \delbar p \otimes \del p.
\]
Finally, using \eqref{eq:S-quadratic-relations} for $\opSt_{2 3 4}^{-1}$, we get
\[
A_{1, 2} = (1 - q^{-(\alpha_s, \alpha_s)}) \EVp_{1 2} \EV_{2 3} \delbar p \otimes \del p = (1 - q^{-(\alpha_s, \alpha_s)}) \metMP.
\]

Since $A_1 = - q^{(\alpha_s, \alpha_s)} A_{1, 1} + q^{(\alpha_s, \alpha_s)} q^{-(\omega_s, 2 \rho)} A_{1, 2}$, we obtain
\[
A_1 = - (1 - q^{(\alpha_s, \alpha_s)}) q^{-(\omega_s, 2 \rho)} \metMP.
\]

\textit{The term $A_2$.}
Now consider the term
\[
A_2 = \EVp_{1 2} \EV_{2 3} \EV_{4 5} \opT_{3 4 5 6} \EV_{4 5} \opT_{3 4 5 6}^{-1} (\delbar p, \del p) \delbar p \otimes \del p.
\]

We claim that $\EV_{4 5} \opT_{3 4 5 6}^{-1} (\delbar p, \del p) \delbar p = q^{-(\omega_s, 2 \rho)} \delbar p p$.
First, by \eqref{eq:S-evaluation} we have
\[
\EV_{4 5} \opT_{3 4 5 6}^{-1} = \EV_{4 5} \opS_{3 4 5}^{-1} \opSt_{4 5 6}^{-1} = q^{-(\omega_s, \omega_s + 2 \rho)} \EVp_{3 4} \opSt_{4 5 6}^{-1}.
\]
This leads to the expression
\[
\EV_{4 5} \opT_{3 4 5 6}^{-1} (\delbar p, \del p) \delbar p = q^{-(\omega_s, \omega_s + 2 \rho)} \EVp_{3 4} \opSt_{4 5 6}^{-1} \CVp_2 p \delbar p - q^{-(\omega_s, 2 \rho)} q^{-(\omega_s, \omega_s + 2 \rho)} \EVp_{3 4} \opSt_{4 5 6}^{-1} p p \delbar p.
\]
For the first term we use the duality relation \eqref{eq:duality} and get
\[
\EVp_{3 4} \opSt_{4 5 6}^{-1} \CVp_2 p \delbar p = \EVp_{3 4} \CVp_2 \opSt_{2 3 4}^{-1} p \delbar p = \opSt_{2 3 4}^{-1} p \delbar p.
\]
For the second term we use the quadratic relation \eqref{eq:S-quadratic-relations} for $\opSt_{4 5 6}^{-1}$ and \eqref{eq:right-module-relations}. We get
\[
\begin{split}
\EVp_{3 4} \opSt_{4 5 6}^{-1} p p \delbar p
& = q^{2 (\omega_s, \omega_s) - (\alpha_s, \alpha_s)} \EVp_{3 4} \opSt_{4 5 6} p p \delbar p + (1 - q^{-(\alpha_s, \alpha_s)}) q^{(\omega_s, \omega_s)} \EVp_{3 4} p p \delbar p \\
& = q^{(\omega_s, \omega_s)} \EVp_{3 4} p \delbar p p + (1 - q^{-(\alpha_s, \alpha_s)}) q^{(\omega_s, \omega_s + 2 \rho)} p \delbar p \\
& = (1 - q^{-(\alpha_s, \alpha_s)}) q^{(\omega_s, \omega_s + 2 \rho)} p \delbar p.
\end{split}
\]
Putting these identities together we obtain
\[
\EV_{4 5} \opT_{3 4 5 6}^{-1} (\delbar p, \del p) \delbar p = q^{-(\omega_s, \omega_s + 2 \rho)} \opSt_{2 3 4}^{-1} p \delbar p - (1 - q^{-(\alpha_s, \alpha_s)}) q^{-(\omega_s, 2 \rho)} p \delbar p.
\]
Then, using the quadratic relation \eqref{eq:S-quadratic-relations} once more for $\opSt_{2 3 4}^{-1}$, we get
\[
\EV_{4 5} \opT_{3 4 5 6}^{-1} (\delbar p, \del p) \delbar p = q^{(\omega_s, \omega_s) - (\alpha_s, \alpha_s)} q^{-(\omega_s, 2 \rho)} \opSt_{2 3 4} p \delbar p = q^{-(\omega_s, 2 \rho)} \delbar p p.
\]

Finally, using this together with \eqref{eq:right-module-relations} and \eqref{eq:evaluations-calculi}, we compute
\[
\begin{split}
A_2 & = \EVp_{1 2} \EV_{2 3} \EV_{4 5} \opT_{3 4 5 6} \EV_{4 5} \opT_{3 4 5 6}^{-1} (\delbar p, \del p) \delbar p \otimes \del p \\
& = q^{-(\omega_s, 2 \rho)} \EVp_{1 2} \EV_{2 3} \EV_{4 5} \opT_{3 4 5 6} \delbar p p \otimes \del p \\
& = q^{-(\alpha_s, \alpha_s)} q^{-(\omega_s, 2 \rho)} \EVp_{1 2} \EV_{2 3} \EV_{4 5} \delbar p \otimes \del p p \\
& = q^{-(\alpha_s, \alpha_s)} q^{-(\omega_s, 2 \rho)} \EVp_{1 2} \EV_{2 3} \delbar p \otimes \del p \\
& = q^{-(\alpha_s, \alpha_s)} q^{-(\omega_s, 2 \rho)} \metMP.
\end{split}
\]

\textit{The term $A_3$.}
Now we consider the term
\[
A_3 = (1 - q^{-(\alpha_s, \alpha_s)}) q^{-(\omega_s, 2 \rho)} \EVp_{1 2} \EV_{2 3} \EVp_{3 4} \EV_{4 5} (\delbar p, \del p) \delbar p \otimes \del p.
\]
Using \eqref{eq:algebra-relations}, \eqref{eq:evaluations-calculi} and \eqref{eq:duality} we compute
\[
\begin{split}
\EVp_{3 4} \EV_{4 5} (\delbar p, \del p) \delbar p 
& = \EVp_{3 4} \EV_{4 5} \CVp_2 p \delbar p - q^{-(\omega_s, 2 \rho)} \EVp_{3 4} \EV_{4 5} p p \delbar p \\
& = \EVp_{3 4} \CVp_2 \EV_{2 3} p \delbar p - q^{-(\omega_s, 2 \rho)} \EVp_{3 4} p \delbar p = \delbar p.
\end{split}
\]
Therefore we obtain
\[
A_3 = (1 - q^{-(\alpha_s, \alpha_s)}) q^{-(\omega_s, 2 \rho)} \metMP.
\]

\textit{The term $A_4$.}
Finally consider the term
\[
A_4 = q^{(\alpha_s, \alpha_s)} q^{-(\omega_s, 2 \rho)} \EVp_{1 2} \EV_{2 3} (\delbar p, \del p) \metMP
= q^{(\alpha_s, \alpha_s)} q^{-(\omega_s, 2 \rho)} \Trq(\metMP) \metMP.
\]
We have $\Trq(\metMP) = q^{(\omega_s, 2 \rho)} \mathrm{qdim}(V) - 1$ from \cref{prop:quantum-trace}. Then we obtain
\[
A_4 = (q^{(\alpha_s, \alpha_s)} \mathrm{qdim}(V) - q^{(\alpha_s, \alpha_s)} q^{-(\omega_s, 2 \rho)}) \metMP.
\]

\textit{The sum.}
For the sum $A = A_1 + A_2 + A_3 + A_4$ we obtain
\[
\begin{split}
A & = - (1 - q^{(\alpha_s, \alpha_s)}) q^{-(\omega_s, 2 \rho)} \metMP + q^{-(\alpha_s, \alpha_s)} q^{-(\omega_s, 2 \rho)} \metMP \\
& + (1 - q^{-(\alpha_s, \alpha_s)}) q^{-(\omega_s, 2 \rho)} \metMP + (q^{(\alpha_s, \alpha_s)} \qdim(V) - q^{(\alpha_s, \alpha_s)} q^{-(\omega_s, 2 \rho)}) \metMP \\
& = q^{(\alpha_s, \alpha_s)} \qdim(V) \metMP.
\end{split}
\]
Since $\ricciMP = - c_{+ -} A$ we obtain the result.
\end{proof}

\subsection{Einstein condition and scalar curvature}

Since the Ricci tensor is symmetric in the classical case, we can ask for the condition $\wedge(\ricci) = 0$ in the quantum case.
This uniquely fixes the coefficients $\cPM$ and $\cMP$ appearing in the splitting map $\splitmap$, as in the next result.

\begin{lemma}
We have $\wedge(\ricci) = 0$ if and only if
\[
\cPM = \frac{1}{1 + q^{(\alpha_s, \alpha_s)} q^{2 (\omega_s, 2 \rho)}}, \quad
\cMP = \frac{1}{1 + q^{-(\alpha_s, \alpha_s)} q^{-2 (\omega_s, 2 \rho)}}.
\]
\end{lemma}

\begin{proof}
Using \cref{lem:ricciPM} and \cref{lem:ricciMP} we can write
\[
\ricci = - \cMP q^{- 2 (\omega_s, 2 \rho)} \qdim(V) \metPM - \cPM q^{(\alpha_s, \alpha_s)} \qdim(V) \metMP.
\]
We have $\wedge(\metMP) = - \wedge(\metPM)$, due to the symmetry property $\wedge(\met) = 0$ of the quantum metric from \eqref{eq:metric-symmetric}.
Then applying $\wedge$ to $\ricci$ gives
\[
\wedge(\ricci) = \qdim(V) (- \cMP q^{- 2 (\omega_s, 2 \rho)} + \cPM q^{(\alpha_s, \alpha_s)}) \wedge(\metPM).
\]
Since $\wedge(\metPM) \neq 0$, this vanishes if and only if $\cMP q^{- 2 (\omega_s, 2 \rho)} = \cPM q^{(\alpha_s, \alpha_s)}$.
Together with the condition $\cPM + \cMP = 1$, this uniquely determines $\cPM$ and $\cMP$ as in the claim.
\end{proof}

We are now in the position to discuss the relation between the Ricci tensor and the quantum metric $\met$.
We recall that a smooth manifold equipped with a metric tensor $g$ is called an \emph{Einstein manifold} if we have the equality $\ricci = k g$, where the scalar $k$ is called the \emph{Einstein constant} (see for instance \cite[Section 3.1.4]{petersen}).

\begin{theorem}
\label{thm:symmetric-ricci}
Choosing the coefficients $\cPM$ and $\cMP$ as above, we have
\[
\ricci = - \frac{q^{(\alpha_s, \alpha_s)}}{1 + q^{(\alpha_s, \alpha_s)} q^{2 (\omega_s, 2 \rho)}} \qdim(V) \met = - \frac{q^2}{1 + q^{2 r + 2}} [r + 1]_q \met.
\]
Hence the quantum projective spaces, equipped with the Fubini-Study quantum metric $\met$, satisfy a quantum analogue of the Einstein condition.
\end{theorem}

\begin{proof}
Inserting the coefficients $\cPM$ and $\cMP$ in the previous expression gives
\[
\begin{split}
\ricci & = - \frac{q^{- 2 (\omega_s, 2 \rho)}}{1 + q^{-(\alpha_s, \alpha_s)} q^{-2 (\omega_s, 2 \rho)}} \qdim(V) \metPM - \frac{q^{(\alpha_s, \alpha_s)}}{1 + q^{(\alpha_s, \alpha_s)} q^{2 (\omega_s, 2 \rho)}} \qdim(V) \metMP \\
& = - \frac{q^{(\alpha_s, \alpha_s)}}{1 + q^{(\alpha_s, \alpha_s)} q^{2 (\omega_s, 2 \rho)}} \qdim(V) (\metPM + \metMP).
\end{split}
\]
Therefore the Ricci tensor is proportional to the quantum metric $\met = \metPM + \metMP$.
The second expression is obtained using $(\alpha_s, \alpha_s) = 2$, $(\omega_s, 2 \rho) = r$ and $\qdim(V) = [r + 1]_q$.
\end{proof}

\begin{remark}
As shown in \cite[Example 8.10]{quantum-book}, $\ricci$ reduces in the classical limit to the usual Ricci tensor from differential geometry, up to an overall constant.
Let us denote the latter by $\ricci_c$, as defined for instance in \cite{petersen}. Then \cite[Section 4.5.3]{petersen} gives the result $\ricci_c = 2 (r + 1) \met$ for the classical projective spaces with the Fubini-Study metric.
This should be compared with the classical limit of \cref{thm:symmetric-ricci}, namely $\ricci = - \frac{1}{2} \dim(V) \met = - \frac{1}{2} (r + 1) \met$, which shows that they coincide up to an overall constant.
\end{remark}

Finally we can look at the \emph{scalar curvature}, defined as
\[
\scal := \imet \circ \ricci.
\]

\begin{corollary}
The scalar curvature is given by
\[
\scal = - q^{-r + 1} [r]_q [r + 1]_q.
\]
\end{corollary}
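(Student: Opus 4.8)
The plan is to combine the Einstein condition from \cref{thm:symmetric-ricci} with the quantum metric dimension already computed in \cref{prop:quantum-trace}, so that the whole corollary reduces to a scalar simplification. By definition $\scal = \imet \circ \ricci$, and \cref{thm:symmetric-ricci} tells us that $\ricci = k \met$ where
\[
k = - \frac{2 q^{(\alpha_s, \alpha_s)}}{1 + q^{(\alpha_s, \alpha_s)} q^{2 (\omega_s, 2 \rho)}} \qdim(V).
\]
Since $\imet$ is $\bbC$-linear, applying it to both sides gives $\scal = k \cdot \imet(\met)$, which turns the problem into a purely numerical one. This is the only structurally important step, and it is immediate once \cref{thm:symmetric-ricci} is available.

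Next I would substitute the explicit values. From \cref{prop:quantum-trace} we have $\imet(\met) = (q^{r + 1} + q^{-(r + 1)}) [r]_q$, and for $\mathfrak{sl}_{r + 1}$ with $s = 1$ or $s = r$ we use $(\alpha_s, \alpha_s) = 2$ and $(\omega_s, 2 \rho) = r$, together with $\qdim(V) = [r + 1]_q$ from \cref{lem:quantum-dimension}. Plugging these in, I would obtain
\[
\scal = - \frac{2 q^2}{1 + q^{2 r + 2}} [r + 1]_q (q^{r + 1} + q^{-(r + 1)}) [r]_q.
\]

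The final step is the cancellation of the denominator, which is the only genuine manipulation. I would rewrite $q^{r + 1} + q^{-(r + 1)} = q^{-(r + 1)} (1 + q^{2 r + 2})$, so that the factor $1 + q^{2 r + 2}$ cancels against the denominator and the prefactor collapses to $2 q^2 \cdot q^{-(r + 1)} = 2 q^{-r + 1}$. This yields
\[
\scal = - 2 q^{-r + 1} [r]_q [r + 1]_q,
\]
as claimed. I do not expect any real obstacle here: the Einstein condition of \cref{thm:symmetric-ricci} has already done the heavy lifting by expressing $\ricci$ as a scalar multiple of $\met$, so the corollary is a short computation, and one can additionally verify the classical limit $q \to 1$ gives $\scal = - 2 r (r + 1)$, consistent with the value of the Einstein constant.
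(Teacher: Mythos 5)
Your proposal is correct and follows essentially the same route as the paper: combine \cref{thm:symmetric-ricci} with the quantum metric dimension $\imet(\met) = (q^{r+1} + q^{-(r+1)})[r]_q$ from \cref{prop:quantum-trace}, substitute $(\alpha_s,\alpha_s) = 2$, $(\omega_s, 2\rho) = r$, $\qdim(V) = [r+1]_q$, and cancel the factor $1 + q^{2r+2}$. The only difference is that you spell out the linearity step $\scal = k\,\imet(\met)$ and the factorization $q^{r+1} + q^{-(r+1)} = q^{-(r+1)}(1 + q^{2r+2})$, which the paper leaves implicit.
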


\begin{proof}
This follows by combining \cref{thm:symmetric-ricci} with the quantum metric dimension from \cref{prop:quantum-trace}, that is $\imet \circ \met = (q^{r + 1} + q^{-(r + 1)}) [r]_q$. Indeed we have
\[
\scal = - \frac{q^{2}}{1 + q^{2 r + 2}} [r + 1]_q \cdot (q^{r + 1} + q^{-(r + 1)}) [r]_q  = - q^{-r + 1} [r]_q [r + 1]_q. \qedhere
\]
\end{proof}

\subsection{Discussion of the choices}

In this brief subsection we discuss the choices made for the splitting map $\splitmap$, used in the definition of the Ricci tensor.
In \cref{prop:split-map} we have restricted our attention to maps which are linear combinations of $\id$ and $\sigma$ in each component $\calc_a \otimes \calc_b$.
This led us to a one-parameter family of splitting maps, upon requiring that they should descend to $\calc^2$ and split the wedge product.
Other choices are probably available, but we should point out that these choices are severely limited if we require the splitting map to be \emph{covariant}, a natural requirement since the differential calculus $\calc$ is covariant.

The one remaining parameter in $\splitmap$ was fixed by the requirement that the Ricci tensor should be symmetric, that is $\wedge(\ricci) = 0$.
This is certainly a natural condition and immediately leads to the Einstein condition $\ricci = k \met$, as discussed above.

Here we point out a somewhat unexpected consequence of this choice.
Classically we have the splitting map $\splitmap(x \otimes y) = \frac{1}{2} (x \otimes y - y \otimes x)$ and the identity $\imet \circ \splitmap = 0$ holds, since the inverse metric is a symmetric bilinear form.
However this is not true in the quantum case with our choice.
Indeed, using \cref{prop:inverse-metric-symmetry} we compute
\[
\begin{split}
\imet \splitmap(\del p \otimes \delbar p)
& = \frac{1}{1 + q^{(\alpha_s, \alpha_s)} q^{2 (\omega_s, 2 \rho)}} (\del p, \delbar p) - \frac{1}{1 + q^{-(\alpha_s, \alpha_s)} q^{-2 (\omega_s, 2 \rho)}} \imet \sigma(\del p \otimes \delbar p) \\
& = (1 - q^{(\alpha_s, \alpha_s)} q^{2 (\omega_s, 2 \rho)}) (\del p, \delbar p).
\end{split}
\]
A similar computation leads to $\imet \splitmap(\delbar p \otimes \del p) = (1 - q^{-(\alpha_s, \alpha_s)} q^{-2 (\omega_s, 2 \rho)}) (\delbar p, \del p)$.
Therefore $\imet \circ \splitmap$ is proportional to $\imet$ (with different coefficients on the two components).

This apparent defect could be fixed by making a different choice for the splitting map.
Indeed, it is easy to check that one can also fix the free parameter in $\splitmap$ by requiring that $\splitmap \circ \imet$ should vanish.
The drawback of this choice is that the Ricci tensor acquires an antisymmetric component, that is $\wedge(\ricci)$ is not zero, and the Einstein condition does not hold.
We consider this choice to be less natural than the one made above.

\appendix

\section{Classical formulae}
\label{sec:classical-formulae}

In this appendix we write down the classical limits of the formulae derived in the paper.
The aim is to provide a better comparison between classical and quantum projective spaces.

First of all, in the classical case the braiding is simply the flip map and we have
\[
(\opS_{1 2 3} w)^{i j k l} = w^{k j i l}, \quad
(\opSt_{2 3 4} w)^{i j k l} = w^{i l k j}, \quad
(\opT_{1 2 3 4} w)^{i j k l} = w^{k l i j}.
\]
Using this fact, the relations \eqref{eq:algebra-relations} for the algebra $\subalg$ become
\[
p^{i j} p^{k l} = p^{k j} p^{i l}, \quad
p^{i j} p^{k l} = p^{i l} p^{k j}, \quad
\sum_i p^{i i} = 1.
\]
Note that combining the first two relations gives $p^{i j} p^{k l} = p^{k l} p^{i j}$, which amounts to commutativity of the generators $p^{i j}$.
For the differential calculus, the relations \eqref{eq:calculus-relations} become
\[
\begin{gathered}
p^{i j} \del p^{k l} = p^{i l} \del p^{k j}, \quad
\sum_i \del p^{i i} = 0, \\
p^{i j} \delbar p^{k l} = p^{k j} \delbar p^{i l}, \quad
\sum_i \delbar p^{i i} = 0.
\end{gathered}
\]
The quantum metric appearing in \eqref{eq:quantum-metric} reduces to
\[
g = \sum_{i, j} (\del p^{i j} \otimes \delbar p^{j i} + \delbar p^{i j} \otimes \del p^{j i}),
\]
which can be seen to correspond to the Fubini-Study metric (for more details see \cite[Appendix A]{paper-projective}).
The inverse metric from \eqref{eq:inverse-metric} becomes
\[
\begin{gathered}
(\del p^{i j}, \del p^{k l}) = 0, \quad
(\delbar p^{i j}, \delbar p^{k l}) = 0, \\
(\del p^{i j}, \delbar p^{k l}) = \delta^{i l} p^{k j} - p^{i j} p^{k l}, \quad
(\delbar p^{i j}, \del p^{k l}) = \delta^{k j} p^{i l} - p^{i j} p^{k l}.
\end{gathered}
\]

The connection \eqref{eq:connection} reduces to the \emph{Levi-Civita connection} on the cotangent bundle, defined with respect to the Fubini-Study metric.
We have the formulae
\[
\begin{split}
\nabla(\del p^{i j}) & = \sum_k \delbar p^{k j} \otimes \del p^{i k} - p^{i j} g_{- +}, \\
\nabla(\delbar p^{i j}) & = \sum_k \del p^{i k} \otimes \delbar p^{k j} - p^{i j} g_{+ -}.
\end{split}
\]
This is clearly a bimodule connection, as any connection in the commutative case.
The generalized braiding \eqref{eq:generalized-braiding} is simply the flip map.

The Riemann tensor from \cref{lem:riemann-minus} and \cref{lem:riemann-plus} reduces to
\[
\begin{split}
\curv(\del p) & = - \sum_{k, l} \del p^{l j} \wedge \delbar p^{k l} \otimes \del p^{i k} - \del p^{i j} \wedge \metMP, \\
\curv(\delbar p) & = - \sum_{k, l} \delbar p^{i k} \wedge \del p^{k l} \otimes \delbar p^{l j} - \delbar p^{i j} \wedge \metPM.
\end{split}
\]
The splitting map from \cref{prop:split-map}, taking into account \cref{thm:symmetric-ricci}, becomes
\[
\splitmap(x \wedge y) = \frac{1}{2} (x \otimes y - y \otimes x).
\]
This is the usual antisymmetrizer, corresponding to the classical splitting map.
Finally for the Ricci tensor given in \cref{thm:symmetric-ricci} and the scalar curvature we have
\[
\ricci = - \frac{1}{2} (r + 1) \met, \quad
\scal = - r (r + 1).
\]
Up to overall factors in the definition of the Riemann and Ricci tensors, these expressions correspond to the classical formulae for the Fubini-Study metric.

\section{The maps \texorpdfstring{$\opS$}{S} and \texorpdfstring{$\opSt$}{St}}
\label{sec:properties-S}

In this appendix we recall various properties satisfied by the maps
\[
\begin{split}
\opS_{1 2 3} & = (\braid_{V, V^*})_{2 3} (\braid_{V, V})_{1 2} (\braid^{-1}_{V, V^*})_{2 3}, \\
\opSt_{2 3 4} & = (\braid_{V, V^*})_{2 3} (\braid^{-1}_{V^*, V^*})_{3 4} (\braid^{-1}_{V, V^*})_{2 3}.
\end{split}
\]
Proofs of these facts can be found in \cite[Appendix A]{paper-projective}.

First some general properties, valid for any simple $\Uqg$-module $V$.

\begin{proposition}
\label{prop:S-properties}
The maps $\opS$ and $\opSt$ satisfy the following properties.

\begin{enumerate}
\item We have the commutation relations
\[
\opS_{1 2 3} \opSt_{2 3 4} = \opSt_{2 3 4} \opS_{1 2 3}, \quad
\opSt_{2 3 4} \opS_{3 4 5} = \opS_{3 4 5} \opSt_{2 3 4}.
\]
\item We have the "braid equations"
\[
\opS_{1 2 3} \opS_{3 4 5} \opS_{1 2 3} = \opS_{3 4 5} \opS_{1 2 3} \opS_{3 4 5}, \quad
\opSt_{2 3 4} \opSt_{4 5 6} \opSt_{2 3 4} = \opSt_{4 5 6} \opSt_{2 3 4} \opSt_{4 5 6}.
\]
\end{enumerate}
\end{proposition}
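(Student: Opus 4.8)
The plan is to exploit the fact that both maps are conjugates of a single elementary braiding. Writing $A := \braid_{V, V^*}$, $R := \braid_{V, V}$ and $B := \braid_{V^*, V^*}$ and using the same leg notation as in \eqref{eq:S-maps}, the definitions read $\opS_{1 2 3} = A_{2 3} R_{1 2} A_{2 3}^{-1}$ and $\opSt_{2 3 4} = A_{2 3} B_{3 4}^{-1} A_{2 3}^{-1}$. Thus $\opS$ is $\braid_{V, V}$ and $\opSt$ is $\braid_{V^*, V^*}^{-1}$, each transported by the fixed conjugating factor $A_{2 3}$. The only two structural inputs I would use are: (i) braidings acting on disjoint pairs of adjacent legs commute; and (ii) the Yang--Baxter (colored braid) relations among the various $\braid_{\bullet, \bullet}^{\pm 1}$, which hold in any braided monoidal category as a consequence of naturality of the braiding together with the hexagon axioms satisfied by the braiding. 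Granting these, every identity in the statement becomes a manipulation of words in the $\braid_{\bullet, \bullet}^{\pm 1}$.

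For the first commutation relation of part (1) this is immediate. The rightmost factor of $\opS_{1 2 3}$ is $A_{2 3}^{-1}$ and the leftmost factor of $\opSt_{2 3 4}$ is $A_{2 3}$, so their product telescopes to $\opS_{1 2 3} \opSt_{2 3 4} = A_{2 3} R_{1 2} B_{3 4}^{-1} A_{2 3}^{-1}$; performing the same cancellation in the reverse order yields $\opSt_{2 3 4} \opS_{1 2 3} = A_{2 3} B_{3 4}^{-1} R_{1 2} A_{2 3}^{-1}$. Since $R_{1 2}$ and $B_{3 4}^{-1}$ act on the disjoint leg pairs $\{1, 2\}$ and $\{3, 4\}$ they commute by (i), so the two expressions agree. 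Notably this case needs no Yang--Baxter input at all.

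The remaining identities --- the second commutation relation $\opSt_{2 3 4} \opS_{3 4 5} = \opS_{3 4 5} \opSt_{2 3 4}$ and the two braid equations of part (2) --- involve operators whose leg sets overlap in positions $3, 4$, so the telescoping trick fails and tool (ii) becomes essential. Here I would expand each factor into its $A, B, R$ form, commute the genuinely disjoint pieces past one another using (i), and then repeatedly apply the colored Yang--Baxter relation to slide the conjugating $A_{\bullet \bullet}^{\pm 1}$ factors through the elementary braidings. Conceptually, $\opS$ braids two $V$-legs past each other while a $V^*$-leg rides along as a spectator, and $\opSt$ does the same for two $V^*$-legs with a $V$-spectator; on the alternating configuration $V \otimes V^* \otimes V \otimes V^* \otimes \cdots$ the relations of part (2) are therefore nothing but the Yang--Baxter equations for $\braid_{V, V}$ and for $\braid_{V^*, V^*}$ respectively, with the spectator legs transported consistently by naturality, while the second commutation relation is a mixed instance of the same principle.

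I expect the main obstacle to be precisely the bookkeeping in this last step: keeping track of which braiding, and which orientation ($A$ versus $A^{-1}$), sits in each adjacent slot as the conjugators are moved, and verifying at each stage that the two sides of the intended braid relation are being applied to legs carrying the correct modules. The underlying algebra is just the braid relation, but the alternation of $V$ and $V^*$ together with the inverse braidings in the conjugators makes the careful verification --- rather than any genuinely new idea --- the real content of the proof.
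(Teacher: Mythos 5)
The paper does not actually prove this proposition: it is stated in \cref{sec:properties-S} with a pointer to \cite[Appendix A]{paper-projective}, so there is no internal argument to compare yours against, and I assess the proposal on its own terms. Your structural reduction is correct, and your telescoping argument for the \emph{first} commutation relation is a complete proof: writing $A = \braid_{V, V^*}$, $R = \braid_{V, V}$, $B = \braid_{V^*, V^*}$, the product $\opS_{1 2 3} \opSt_{2 3 4}$ collapses to $A_{2 3} R_{1 2} B_{3 4}^{-1} A_{2 3}^{-1}$, all module types match at each stage, and $R_{1 2}$, $B_{3 4}^{-1}$ act on disjoint legs.

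The gap lies in the remaining three identities, where the proposal stops at a plan, and where the plan's conceptual summary is too quick. It is not accurate that part (2) is ``nothing but the Yang--Baxter equation for $\braid_{V,V}$ (resp. $\braid_{V^*,V^*}$) with spectator legs'': $\opS_{1 2 3}$ and $\opS_{3 4 5}$ are crossings conjugated by \emph{different} factors ($A_{2 3}$ versus $A_{4 5}$), so nothing telescopes, and the braid relation for such conjugated crossings (band generators, in braid-group language) is a \emph{derived} identity whose verification necessarily uses the mixed-color Yang--Baxter relations involving $\braid_{V, V^*}^{\pm 1}$, not only the relation for $\braid_{V,V}$. The strategy does succeed, but these verifications are the entire content. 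For instance, for the second commutation relation the efficient route is to first establish, via one Yang--Baxter rearrangement each (both are instances of the braid relation valid in any braided category, the underlying colored braids being isotopic), the alternative presentations
\[
\opSt_{2 3 4} = (\braid^{-1}_{V, V^*})_{3 4} (\braid^{-1}_{V^*, V^*})_{2 3} (\braid_{V, V^*})_{3 4}, \qquad
\opS_{3 4 5} = (\braid^{-1}_{V, V^*})_{3 4} (\braid_{V, V})_{4 5} (\braid_{V, V^*})_{3 4},
\]
which exhibit both operators as conjugates by the \emph{same} factor $(\braid_{V, V^*})_{3 4}$; then both products $\opSt_{2 3 4} \opS_{3 4 5}$ and $\opS_{3 4 5} \opSt_{2 3 4}$ telescope to
\[
(\braid^{-1}_{V, V^*})_{3 4} (\braid^{-1}_{V^*, V^*})_{2 3} (\braid_{V, V})_{4 5} (\braid_{V, V^*})_{3 4}
\quad \textrm{and} \quad
(\braid^{-1}_{V, V^*})_{3 4} (\braid_{V, V})_{4 5} (\braid^{-1}_{V^*, V^*})_{2 3} (\braid_{V, V^*})_{3 4}
\]
respectively, which agree since the middle factors act on disjoint legs. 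The braid equations of part (2) require more: after the far commutations you invoke and one cancellation, $\opS_{1 2 3} \opS_{3 4 5} \opS_{1 2 3} = \opS_{3 4 5} \opS_{1 2 3} \opS_{3 4 5}$ reduces to the braid relation between the conjugated crossing $\opS_{1 2 3}$ and a single elementary crossing on legs $3, 4$, which takes roughly three further applications of the colored Yang--Baxter identity (this is the band-generator relation; I checked it closes). So your proposal identifies the right tools and proves one of the four identities, but defers exactly the computations in which the proposition's content resides; alternatively, you could make the plan rigorous in one stroke by invoking coherence for braided monoidal categories and then verifying the four identities as equalities of words in the braid group, which still must be done but becomes pure braid algebra.
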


Now we consider the case $V = V(\omega_s)$, so that $\braid_{V, V}$ satisfies a quadratic relation as in \eqref{eq:hecke-relation}.
This quadratic relation and its analogue for $\braid_{V^*, V^*}$ lead to the following identities.

\begin{lemma}
In the quadratic case we have the relations
\begin{equation}
\label{eq:S-quadratic-relations}
\begin{split}
\opS_{1 2 3} & = q^{2 (\omega_s, \omega_s) - (\alpha_s, \alpha_s)} \opS_{1 2 3}^{-1} + (1 - q^{- (\alpha_s, \alpha_s)}) q^{(\omega_s, \omega_s)}, \\
\opSt_{2 3 4} & = q^{(\alpha_s, \alpha_s) - 2 (\omega_s, \omega_s)} \opSt_{2 3 4}^{-1} + (1 - q^{(\alpha_s, \alpha_s)}) q^{- (\omega_s, \omega_s)}.
\end{split}
\end{equation}
\end{lemma}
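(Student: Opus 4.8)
The plan is to exploit the fact that both $\opS_{123}$ and $\opSt_{234}$ are conjugates of a single braiding operator, so that the Hecke relation \eqref{eq:hecke-relation} transfers to them verbatim. Since $(\braid_{V,V^*})_{23}$ is invertible, the definition \eqref{eq:S-maps} exhibits $\opS_{123}$ as a conjugation,
\[
\opS_{123} = A\,(\braid_{V,V})_{12}\,A^{-1}, \qquad A := (\braid_{V,V^*})_{23}.
\]
Because conjugation by $A$ is an algebra automorphism, every polynomial identity satisfied by $(\braid_{V,V})_{12}$ is inherited by $\opS_{123}$. In particular the Hecke relation \eqref{eq:hecke-relation} gives
\[
(\opS_{123} - q^{(\omega_s,\omega_s)})(\opS_{123} + q^{(\omega_s,\omega_s)-(\alpha_s,\alpha_s)}) = 0.
\]
Expanding this yields $\opS_{123}^2 = (q^{(\omega_s,\omega_s)} - q^{(\omega_s,\omega_s)-(\alpha_s,\alpha_s)})\,\opS_{123} + q^{2(\omega_s,\omega_s)-(\alpha_s,\alpha_s)}$, and multiplying through by the invertible map $\opS_{123}^{-1}$ produces the first relation of \eqref{eq:S-quadratic-relations}, after noting that $q^{(\omega_s,\omega_s)} - q^{(\omega_s,\omega_s)-(\alpha_s,\alpha_s)} = (1 - q^{-(\alpha_s,\alpha_s)})\,q^{(\omega_s,\omega_s)}$.

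The argument for $\opSt_{234}$ is structurally identical but involves an inverse. Here the definition \eqref{eq:S-maps} gives $\opSt_{234} = A\,(\braid_{V^*,V^*})_{34}^{-1}\,A^{-1}$ with the same $A$, so $\opSt_{234}$ satisfies whatever quadratic relation $(\braid_{V^*,V^*})^{-1}$ does. The first step is to record that $\braid_{V^*,V^*}$ obeys the \emph{same} Hecke relation as $\braid_{V,V}$: writing $V^* = V(\omega_{s'})$, the diagram automorphism of $\mathfrak{sl}_{r+1}$ exchanging $\omega_1 \leftrightarrow \omega_r$ and $\alpha_1 \leftrightarrow \alpha_r$ gives $(\omega_{s'},\omega_{s'}) = (\omega_s,\omega_s)$ and $(\alpha_{s'},\alpha_{s'}) = (\alpha_s,\alpha_s)$, so the coefficients in \eqref{eq:hecke-relation} are unchanged. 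Next I would pass from the quadratic relation for $Y := \braid_{V^*,V^*}$ to one for $Y^{-1}$: multiplying $Y^2 = (q^{(\omega_s,\omega_s)} - q^{(\omega_s,\omega_s)-(\alpha_s,\alpha_s)})Y + q^{2(\omega_s,\omega_s)-(\alpha_s,\alpha_s)}$ by $Y^{-2}$ and rescaling yields a quadratic relation for $Y^{-1}$, which $\opSt_{234}$ inherits by conjugation. A final multiplication by $\opSt_{234}^{-1}$ and collecting terms produces the second relation of \eqref{eq:S-quadratic-relations}.

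The main obstacle is purely bookkeeping: converting the Hecke relation for $\braid_{V^*,V^*}$ into the correctly normalized relation for its inverse and matching the resulting $q$-exponents against the stated formula. The conceptual content — conjugation-invariance of polynomial identities together with invertibility of all the braidings — is immediate; the only place where care is genuinely needed is confirming that the dual fundamental module contributes identical inner products, so that both $\opS$ and $\opSt$ carry the data $(\omega_s,\omega_s)$ and $(\alpha_s,\alpha_s)$ rather than any dual values.
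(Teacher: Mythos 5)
Your proposal is correct and follows what is essentially the intended argument: the paper itself defers the proof of this lemma to \cite[Appendix A]{paper-projective}, where the relations are obtained exactly as you do, by transferring the Hecke relation \eqref{eq:hecke-relation} through the conjugations in \eqref{eq:S-maps} (using that $\braid_{V^*,V^*}$ satisfies the same quadratic relation, since the diagram automorphism preserves $(\omega_s,\omega_s)$ and $(\alpha_s,\alpha_s)$) and then inverting. Your bookkeeping for the passage from the relation for $\braid_{V^*,V^*}$ to the one for its inverse, and the final coefficient identifications, all check out against \eqref{eq:S-quadratic-relations}.
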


The use of the maps $\opS$ and $\opSt$ is the main technical difference between the presentation of $\calc$ given in \cite{paper-projective} and the original presentation from \cite{heko}.

\section{Some identities}
\label{sec:some-identities}

In this appendix we collect various identities used throughout the paper.

The first set of identities expresses certain relations between the evaluation maps $\EV$ and $\EVp$ (a proof is given in \cite[Lemma C.2]{paper-projective}).

\begin{lemma}
\label{lem:S-evaluation}
Let $V = V(\lambda)$ be a simple module. Then we have
\begin{equation}
\label{eq:S-evaluation}
\EVp_{1 2} \opS_{1 2 3} = q^{(\lambda, \lambda + 2 \rho)} \EV_{2 3}, \quad
\EVp_{3 4} \opSt_{2 3 4}^{-1} = q^{(\lambda, \lambda + 2 \rho)} \EV_{2 3}.
\end{equation}
\end{lemma}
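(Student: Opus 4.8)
The plan is to prove both identities purely categorically, using only the braiding--evaluation compatibility relations \eqref{eq:evaluations} together with \cref{lem:categorical-identities}; no quadratic (Hecke) information is needed, which is consistent with the statement holding for an arbitrary simple module $V(\lambda)$. First I would pin down the leg types so that the two sides are genuinely comparable morphisms. Unwinding the definitions \eqref{eq:S-maps}, one checks that $\opS_{123}$ is an endomorphism of $V_1 \otimes V^*_2 \otimes V_3$ and that $\opSt^{-1}_{234}$ is an endomorphism of $V^*_2 \otimes V_3 \otimes V^*_4$, so that $\EVp_{12}\opS_{123}$ and $\EV_{23}$ both send $V_1 \otimes V^*_2 \otimes V_3$ to $V$, while $\EVp_{34}\opSt^{-1}_{234}$ and $\EV_{23}$ both send $V^*_2 \otimes V_3 \otimes V^*_4$ to $V^*$.

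For the first identity I would substitute $\opS_{123} = (\braid_{V,V^*})_{23}(\braid_{V,V})_{12}(\braid^{-1}_{V,V^*})_{23}$ from \eqref{eq:S-maps} and read off
\[
\EVp_{12}\opS_{123} = \bigl[\EVp_{12}(\braid_{V,V^*})_{23}(\braid_{V,V})_{12}\bigr](\braid^{-1}_{V,V^*})_{23}.
\]
The bracketed factor is exactly the last relation in \eqref{eq:evaluations}, namely $\EVp_{23} = \EVp_{12}(\braid_{W,V^*})_{23}(\braid_{W,V})_{12}$, specialized to the spectator object $W = V$; hence the bracket collapses to $\EVp_{23}$, leaving $\EVp_{23}(\braid^{-1}_{V,V^*})_{23}$. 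Finally \cref{lem:categorical-identities}, which gives $\EVp = q^{(\lambda,\lambda+2\rho)}\EV\circ\braid_{V,V^*}$, yields $\EVp\circ\braid^{-1}_{V,V^*} = q^{(\lambda,\lambda+2\rho)}\EV$, so that $\EVp_{12}\opS_{123} = q^{(\lambda,\lambda+2\rho)}\EV_{23}$.

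The second identity follows the same template. Writing $\opSt^{-1}_{234} = (\braid_{V,V^*})_{23}(\braid_{V^*,V^*})_{34}(\braid^{-1}_{V,V^*})_{23}$ from \eqref{eq:S-maps}, the first three factors of $\EVp_{34}\opSt^{-1}_{234}$ match the index-shifted form of the third relation in \eqref{eq:evaluations}, that is $\EVp_{23} = \EVp_{34}(\braid_{V,W})_{23}(\braid_{V^*,W})_{34}$, now with spectator $W = V^*$; they therefore collapse to $\EVp_{23}$, and \cref{lem:categorical-identities} again converts the trailing $(\braid^{-1}_{V,V^*})_{23}$ into the scalar $q^{(\lambda,\lambda+2\rho)}$ together with $\EV_{23}$. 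The only genuine obstacle is the bookkeeping: one must track the type of each leg through the braidings in order to identify the correct compatibility relation and the correct spectator object $W$, and to confirm that the trailing braiding is exactly the inverse required by \cref{lem:categorical-identities}. Once the leg types are fixed as above, each identity is a two-move computation, matching the argument referenced in \cite[Lemma C.2]{paper-projective}.
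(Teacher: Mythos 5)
Your proof is correct. Note that the paper itself gives no argument for this lemma: in \cref{sec:some-identities} it simply cites \cite[Lemma C.2]{paper-projective}, so there is no in-paper proof to compare against line by line. Your derivation is exactly the kind of argument that is needed, and it has the virtue of being self-contained within this paper's toolkit: the leg-type bookkeeping is right ($\opS_{1 2 3}$ an endomorphism of $V_1 \otimes V^*_2 \otimes V_3$, $\opSt_{2 3 4}^{-1}$ of $V^*_2 \otimes V_3 \otimes V^*_4$), the bracketed factors do match the fourth relation of \eqref{eq:evaluations} with spectator $W = V$ and the index-shifted third relation with $W = V^*$ respectively, and the trailing $(\braid^{-1}_{V, V^*})_{2 3}$ is converted into $q^{(\lambda, \lambda + 2\rho)} \EV_{2 3}$ by \cref{lem:categorical-identities} exactly as you claim. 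Your observation that no Hecke-type quadratic relation is used is also apt, since it explains why the statement holds for an arbitrary simple module $V(\lambda)$ rather than only for $V(\omega_s)$.
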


Next, we show certain identities which hold in $\calc \otimes_\subalg \calc$, or alternatively in the tensor algebra $\talg(\calc)$.
These identities rely crucially on the fact that the tensor product is over the algebra $\subalg$, and do not hold when the tensor product is over $\bbC$.

\begin{lemma}
We have the identities
\begin{equation}
\label{eq:tensor-algebra-evaluations}
\EV_{2 3} \del p \otimes \del p = 0, \quad
\EV_{2 3} \delbar p \otimes \delbar p = 0.
\end{equation}
\end{lemma}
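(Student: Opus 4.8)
The plan is to prove each of the two identities by inserting an auxiliary copy of the projection generator $p$ and then exploiting the fact that the tensor product is taken over $\subalg$, so that algebra elements may be slid freely across the tensor sign. The only inputs needed are the four relations of \eqref{eq:evaluations-calculi}, which record how $\EV_{2 3}$ interacts with products of $p$ against $\del p$ and $\delbar p$; everything else is bookkeeping.

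First I would treat $\EV_{2 3} \del p \otimes \del p = 0$. The idea is to rewrite the first factor using $\EV_{2 3}\del p\, p = \del p$, which introduces a $p$ on the right of the first $\del p$. After renumbering the legs so that the original contraction becomes $\EV_{4 5}$, this reads
\[
\EV_{2 3}\, \del p \otimes \del p = \EV_{4 5}\EV_{2 3}\, \del p\, p \otimes \del p.
\]
Since the inserted $p$ sits at the right end of the first tensor factor and lies in $\subalg$, I can move it across the balanced tensor product,
\[
\EV_{4 5}\EV_{2 3}\, \del p\, p \otimes \del p = \EV_{4 5}\EV_{2 3}\, \del p \otimes p\, \del p,
\]
at which point the inner factor $\EV_{4 5}\, p\, \del p$ is, up to a shift of leg labels by two, exactly the relation $\EV_{2 3}\, p\, \del p = 0$ from \eqref{eq:evaluations-calculi}. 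Hence the whole expression vanishes.

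The identity $\EV_{2 3}\delbar p \otimes \delbar p = 0$ is the mirror image, using the complementary pair of relations. Here I would instead expand the \emph{second} factor via $\EV_{2 3}\, p\, \delbar p = \delbar p$, which inserts a $p$ on the left of the second $\delbar p$; sliding this $p$ to the left across $\otimes_\subalg$ produces a factor $\delbar p\, p$, which is recognized as $\EV_{2 3}\, \delbar p\, p = 0$. The switch from "$p$ on the right" to "$p$ on the left" is forced precisely because $\del p$ and $\delbar p$ obey the opposite pair of evaluation relations in \eqref{eq:evaluations-calculi}.

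I expect the only genuine subtlety to be the index bookkeeping: inserting $p$ adds two legs, so one must carefully re-index the surviving contraction (here $\EV_{2 3}\to\EV_{4 5}$) to ensure that, after $p$ has crossed the tensor sign, the contraction lands on the two legs that match a relation of \eqref{eq:evaluations-calculi}. It is also worth stressing, as the statement itself does, that the step sliding $p$ across the tensor sign is legitimate only because the tensor product is over $\subalg$ rather than over $\bbC$; this is exactly where the argument uses the bimodule structure, and it explains why the identities fail in $\calc \otimes_{\bbC} \calc$.
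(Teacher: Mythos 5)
Your proposal is correct and follows essentially the same route as the paper's own proof: for the first identity you insert $p$ via $\EV_{2 3}\,\del p\, p = \del p$, slide it across $\otimes_\subalg$, and invoke $\EV_{2 3}\, p\,\del p = 0$, while for the second you expand via $\EV_{2 3}\, p\,\delbar p = \delbar p$ and use $\EV_{2 3}\,\delbar p\, p = 0$ — exactly the paper's two computations, including the correct re-indexing $\EV_{2 3} \to \EV_{4 5}$ of the surviving contraction. Your closing remark about the argument relying on the tensor product being over $\subalg$ rather than $\bbC$ is also precisely the point the paper emphasizes.
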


\begin{proof}
Using \eqref{eq:evaluations-calculi} and keeping in mind the tensor product over $\subalg$, we compute
\[
\EV_{2 3} \del p \otimes \del p = \EV_{2 3} \EV_{2 3} \del p p \otimes \del p
= \EV_{2 3} \EV_{4 5} \del p \otimes p \del p = 0.
\]
Similarly, for the second identity we compute
\[
\EV_{2 3} \delbar p \otimes \delbar p = \EV_{2 3} \EV_{4 5} \delbar p \otimes p \delbar p
= \EV_{2 3} \EV_{2 3} \delbar p p \otimes \delbar p = 0. \qedhere
\]
\end{proof}

The next result is similar in spirit to the previous one.

\begin{lemma}
We have the identities
\begin{equation}
\label{eq:tensor-algebra-identities}
\opSt_{2 3 4} \del p \otimes \del p = q^{-(\omega_s, \omega_s)} \del p \otimes \del p, \quad
\opS_{1 2 3} \delbar p \otimes \delbar p = q^{(\omega_s, \omega_s)} \delbar p \otimes \delbar p.
\end{equation}
\end{lemma}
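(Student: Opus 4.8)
The plan is to read both identities as the ``$p$-free'' shadows of the calculus relations \eqref{eq:calculus-relations}, and to prove them by the same mechanism used for \eqref{eq:tensor-algebra-evaluations}: insert a generator $p$ by means of the contraction identities \eqref{eq:evaluations-calculi}, slide it across the balanced tensor product $\otimes_\subalg$, apply the relevant relation to the resulting $p$-block, and finally contract the inserted $p$ away. The one genuinely new ingredient will be a ``$p$-on-the-right'' version of the second calculus relation. Throughout, the structural fact I will exploit is that a braided operator such as $\opSt_{4 5 6}$ or $\opS_{1 2 3}$ commutes with an evaluation acting on disjoint legs, so the whole argument reduces to choosing the insertion side that keeps the two disjoint.

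For the first identity I would expand the left factor using $\del p = \EV_{2 3}(\del p\, p)$ from \eqref{eq:evaluations-calculi}, and slide the inserted $p$ across $\otimes_\subalg$ so that it sits immediately to the left of the right factor. This rewrites $\del p \otimes \del p$ as the evaluation $\EV_{2 3}$ of the six-leg element with $\del p$ on legs $1,2$, the inserted $p$ on legs $3,4$, and $\del p$ on legs $5,6$, the contraction being on legs $2,3$. In this picture the original $\opSt_{2 3 4}$ becomes $\opSt_{4 5 6}$, which touches only the second index of the inserted $p$ (leg $4$) together with both indices of the right $\del p$ --- precisely the block $p\,\del p$, with the first index of $p$ a spectator. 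Since $\opSt_{4 5 6}$ is disjoint from $\EV_{2 3}$, I can pull it past the evaluation and apply $\opSt_{2 3 4} p\del p = q^{-(\omega_s, \omega_s)} p\del p$ from \eqref{eq:calculus-relations}; contracting back via $\EV_{2 3}$ reconstructs $\del p \otimes \del p$ and yields the scalar $q^{-(\omega_s, \omega_s)}$.

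The second identity is not a literal mirror, because the ``push to the right'' move $\EV_{2 3}(\delbar p\, p)$ vanishes by \eqref{eq:evaluations-calculi}; the only available insertion is $\delbar p = \EV_{2 3}(p\,\delbar p)$, i.e.\ $p$ on the left. Applying this to the right factor places $p$ between the two $\delbar p$'s, rewriting $\delbar p \otimes \delbar p$ as $\EV_{4 5}$ of the six-leg element with $\delbar p$ on legs $1,2$, the inserted $p$ on legs $3,4$, and $\delbar p$ on legs $5,6$. Now $\opS_{1 2 3}$ already sits on consecutive legs $1,2,3$, namely both indices of the left $\delbar p$ and the first index of the inserted $p$; this is the block $\delbar p\, p$, not $p\,\delbar p$. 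I therefore need the auxiliary relation $\opS_{1 2 3}\delbar p\, p = q^{(\omega_s, \omega_s)}\delbar p\, p$, which I would derive by combining $\opS_{1 2 3} p\delbar p = q^{(\omega_s, \omega_s)} p\delbar p$ from \eqref{eq:calculus-relations} with the commutation $\opS_{1 2 3}\opSt_{2 3 4} = \opSt_{2 3 4}\opS_{1 2 3}$ of \cref{prop:S-properties} and the right-module relation $\delbar p\, p = q^{(\omega_s, \omega_s) - (\alpha_s, \alpha_s)}\opSt_{2 3 4} p\delbar p$ of \eqref{eq:right-module-relations}. With this in hand, $\opS_{1 2 3}$ commutes past the disjoint $\EV_{4 5}$, the relation produces the scalar $q^{(\omega_s, \omega_s)}$, and contracting back reconstructs $\delbar p \otimes \delbar p$.

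The main obstacle is exactly this asymmetry in the contraction identities \eqref{eq:evaluations-calculi}: for $\del p$ the evaluation lets one push $p$ to the right and land directly on the stated relation, whereas for $\delbar p$ the analogous move is zero, forcing the left-insertion and hence the auxiliary relation $\opS_{1 2 3}\delbar p\, p = q^{(\omega_s, \omega_s)}\delbar p\, p$. Getting the leg-bookkeeping right --- so that the braided operator and the contracting evaluation act on disjoint legs, and the spectator index of the inserted $p$ is consistently the one being contracted --- is the delicate part; once the correct insertion side is chosen in each case, the remaining computations are short and routine.
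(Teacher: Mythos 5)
Your proof is correct and follows essentially the same route as the paper: insert $p$ via \eqref{eq:evaluations-calculi} (on the right of the left factor for $\del p$, on the left of the right factor for $\delbar p$), slide it across $\otimes_\subalg$, commute the braided operator past the disjoint evaluation, apply the relevant relation, and contract back. The only difference is that you explicitly derive the auxiliary relation $\opS_{1 2 3}\delbar p\, p = q^{(\omega_s, \omega_s)}\delbar p\, p$ from \eqref{eq:right-module-relations} and \cref{prop:S-properties}, a step the paper uses implicitly after noting that the relations \eqref{eq:calculus-relations} can be rewritten with $p$ on the right.
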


\begin{proof}
Using \eqref{eq:evaluations-calculi} and \eqref{eq:algebra-relations} we compute
\[
\begin{split}
\opSt_{2 3 4} \del p \otimes \del p
& = \opSt_{2 3 4} \EV_{2 3} \del p p \otimes \del p = \EV_{2 3} \opSt_{4 5 6} \del p \otimes p \del p \\
& = q^{-(\omega_s, \omega_s)} \EV_{2 3} \del p \otimes p \del p = q^{-(\omega_s, \omega_s)} \del p \otimes \del p.
\end{split}
\]
Similarly, for the second identity we compute
\[
\begin{split}
\opS_{1 2 3} \delbar p \otimes \delbar p
& = \opS_{1 2 3} \EV_{4 5} \delbar p \otimes p \delbar p = \EV_{4 5} \opS_{1 2 3} \delbar p p \otimes \delbar p \\
& = q^{(\omega_s, \omega_s)} \EV_{4 5} \delbar p p \otimes \delbar p = q^{(\omega_s, \omega_s)} \delbar p \otimes \delbar p. \qedhere
\end{split}
\]
\end{proof}

Finally we derive formulae for certain combinations of the evaluation $\EVp$ and the maps $\opT$ and $\opT^{-1}$.
The following terms are zero classically, but not in the quantum case.

\begin{lemma}
\label{lem:evaluation-T}
We have the identities
\[
\begin{split}
\EVp_{1 2} \opT_{1 2 3 4}^{-1} \del p \otimes \delbar p
& = - (1 - q^{(\alpha_s, \alpha_s)}) q^{(\omega_s, 2 \rho)} \EV_{2 3} \del p \otimes \delbar p + (1 - q^{(\alpha_s, \alpha_s)}) \metPM p, \\
\EVp_{3 4} \opT_{1 2 3 4} \delbar p \otimes \del p
& = (1 - q^{(\alpha_s, \alpha_s)}) q^{(\omega_s, 2 \rho)} \EV_{2 3} \opT_{1 2 3 4} \delbar p \otimes \del p - (1 - q^{(\alpha_s, \alpha_s)}) p \metMP.
\end{split}
\]
\end{lemma}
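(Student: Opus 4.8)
The plan is to prove both identities by the same mechanism, reducing everything to the two \emph{matched} evaluation relations obtained from \eqref{eq:S-evaluation}, namely $\EVp_{1 2} \opS_{1 2 3} = q^{(\omega_s, \omega_s + 2 \rho)} \EV_{2 3}$ and its consequence $\EV_{2 3} \opSt_{2 3 4} = q^{-(\omega_s, \omega_s + 2 \rho)} \EVp_{3 4}$, used in tandem with the quadratic relations \eqref{eq:S-quadratic-relations} and the calculus relations \eqref{eq:calculus-relations}, \eqref{eq:right-module-relations}, \eqref{eq:evaluations-calculi}. The general strategy is to split $\opT^{\pm 1}_{1 2 3 4} = \opS_{1 2 3}^{\pm 1} \opSt_{2 3 4}^{\pm 1}$, use \eqref{eq:S-quadratic-relations} to trade an inverse map for the map plus a multiple of the identity, and then collapse the matched evaluation pairing.

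For the first identity I would write $\opT_{1 2 3 4}^{-1} = \opS_{1 2 3}^{-1} \opSt_{2 3 4}^{-1}$ and expand $\EVp_{1 2} \opS_{1 2 3}^{-1}$ via \eqref{eq:S-quadratic-relations} and \eqref{eq:S-evaluation}. This writes $\EVp_{1 2} \opT_{1 2 3 4}^{-1} \del p \otimes \delbar p$ as a scalar multiple of $\EV_{2 3} \opSt_{2 3 4}^{-1} \del p \otimes \delbar p$ plus a scalar multiple of $\EVp_{1 2} \opSt_{2 3 4}^{-1} \del p \otimes \delbar p$. The first of these collapses cleanly: expanding $\opSt_{2 3 4}^{-1}$ by \eqref{eq:S-quadratic-relations}, the leading term $\EV_{2 3} \opSt_{2 3 4}$ equals $q^{-(\omega_s, \omega_s + 2 \rho)} \EVp_{3 4}$ by \eqref{eq:S-evaluation}, which annihilates $\del p \otimes \delbar p$ because $\EVp_{3 4} \delbar p = 0$ by \eqref{eq:calculus-relations}; what survives is exactly the multiple of $\EV_{2 3} \del p \otimes \delbar p$ on the right-hand side. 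For the second piece I would remove the inverse using \eqref{eq:S-quadratic-relations} once more, the $\id$-part dying since $\EVp_{1 2} \del p = 0$, thereby reducing the crux to evaluating $\EVp_{1 2} \opSt_{2 3 4} \del p \otimes \delbar p$.

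This last term is the main obstacle, since here $\EVp_{1 2}$ is paired with $\opSt_{2 3 4}$ in a \emph{mismatched} way: neither relation of \eqref{eq:S-evaluation} collapses it directly, and the natural shortcuts (inserting $\opS_{1 2 3}$ to build $\opT$, or passing to the inverse) turn out to be circular. To break the loop I would invoke the module structure: rewrite the leading $\del p$ via the projection relation $\EV_{2 3} \del p p = \del p$ from \eqref{eq:evaluations-calculi}, push the resulting factor of $p$ through $\opSt_{2 3 4}$ using the right-module relation \eqref{eq:right-module-relations}, and recognize the surviving contraction as the metric component through the definition \eqref{eq:quantum-metric}. The expected value is $\EVp_{1 2} \opSt_{2 3 4} \del p \otimes \delbar p = q^{(\alpha_s, \alpha_s) - (\omega_s, \omega_s)} \metPM p$; collecting the scalar prefactors then assembles the $(1 - q^{(\alpha_s, \alpha_s)}) \metPM p$ term, and a consistency check shows the two contributions share the common factor $1 - q^{(\alpha_s, \alpha_s)}$. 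Tracking these $q$-powers carefully is the part most prone to error.

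The second identity I would handle in parallel, writing $\opT_{1 2 3 4} = \opSt_{2 3 4} \opS_{1 2 3}$ and expanding $\EVp_{3 4} \opSt_{2 3 4}$ by \eqref{eq:S-quadratic-relations} and \eqref{eq:S-evaluation}. This produces a term proportional to $\EV_{2 3} \opS_{1 2 3} \delbar p \otimes \del p$, which I would recast in terms of $\EV_{2 3} \opT_{1 2 3 4} \delbar p \otimes \del p$ (the form kept on the right-hand side), together with a mismatched term $\EVp_{3 4} \opS_{1 2 3} \delbar p \otimes \del p$ playing the role of the crux above; the same module-relation manipulation, now yielding $p \metMP$, completes the computation. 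Since the roles of $\del p$ and $\delbar p$, of $\opS$ and $\opSt$, and of the two evaluations are interchanged throughout, no genuinely new difficulty arises once the first identity is in place.
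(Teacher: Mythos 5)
Your first identity is handled correctly and by essentially the paper's own route: you expand the quadratic relation \eqref{eq:S-quadratic-relations} before inserting $p$, whereas the paper inserts $p$ first and expands afterwards, but the ingredients and all intermediate values coincide. In particular your crux value $\EVp_{1 2} \opSt_{2 3 4} \del p \otimes \delbar p = q^{(\alpha_s, \alpha_s) - (\omega_s, \omega_s)} \metPM p$ is exactly equivalent to the paper's evaluation of its term $A_2 = \EVp_{1 2} \opSt_{2 3 4}^{-1} \del p \otimes \delbar p = q^{(\omega_s, \omega_s)} \metPM p$.

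The second identity, however, contains a genuine gap: you have swapped the destinies of the two summands, and the tool you propose for the metric term does not exist in this configuration. The term you label ``mismatched'', $\EVp_{3 4} \opS_{1 2 3} \delbar p \otimes \del p$, in fact needs no work at all: inserting $\opSt_{2 3 4}^{-1} \opSt_{2 3 4} = \id$ and using the second relation of \eqref{eq:S-evaluation} gives
\[
\EVp_{3 4} \opS_{1 2 3} \delbar p \otimes \del p = q^{(\omega_s, \omega_s + 2 \rho)} \EV_{2 3} \opSt_{2 3 4} \opS_{1 2 3} \delbar p \otimes \del p = q^{(\omega_s, \omega_s + 2 \rho)} \EV_{2 3} \opT_{1 2 3 4} \delbar p \otimes \del p,
\]
so up to a scalar it \emph{is} the $\EV_{2 3} \opT_{1 2 3 4}$ summand kept on the right-hand side. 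It is not proportional to $p \metMP$ (classically it reduces to $\sum_j \delbar p^{j l} \otimes \del p^{i j}$, whose support at the base point is disjoint from that of $p \metMP$), so no manipulation can turn it into the metric term, contrary to your plan. Conversely, the term you propose to ``recast in terms of $\EV_{2 3} \opT_{1 2 3 4}$'', namely $\EV_{2 3} \opS_{1 2 3} \delbar p \otimes \del p$, is precisely the one that produces $p \metMP$: expanding $\opS_{1 2 3}$ by \eqref{eq:S-quadratic-relations}, the inverse part dies because $\EV_{2 3} \opS_{1 2 3}^{-1}$ is proportional to $\EVp_{1 2}$ and $\EVp_{1 2} \delbar p = 0$, leaving $(1 - q^{-(\alpha_s, \alpha_s)}) q^{(\omega_s, \omega_s)} \EV_{2 3} \delbar p \otimes \del p$; trying instead to force this term into $\EV \opT$ form merely reproduces \eqref{eq:S-evaluation} and is circular.

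There is also a missing ingredient that your closing claim (``no genuinely new difficulty arises'') hides. Converting $\EV_{2 3} \delbar p \otimes \del p$ into $p \metMP$ is \emph{not} achieved by the module-relation manipulation of the first identity: that trick hinged on inserting $p$ between the two one-forms, which is legitimate for $\del p \otimes \delbar p$ since both $\EV_{2 3} \del p \, p = \del p$ and $\EV_{2 3} p \delbar p = \delbar p$ hold, but is blocked for $\delbar p \otimes \del p$, where \eqref{eq:evaluations-calculi} gives $\EV_{2 3} \delbar p \, p = 0$ and $\EV_{2 3} p \del p = 0$; inserting $p$ at the outer ends instead leaves the legs of $\opS_{1 2 3}$ non-consecutive, so the right-module relations cannot be applied. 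What is actually required is the separate identity $p \metMP = q^{(\omega_s, 2 \rho)} \EV_{2 3} \delbar p \otimes \del p$, which the paper imports from \cite[Lemma C.5]{paper-projective}. Your proof of the second identity cannot be completed without this (or an equivalent) additional input.
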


\begin{proof}
\textit{First identity.}
Using the quadratic relation \eqref{eq:S-quadratic-relations} for $\opS_{1 2 3}^{-1}$ in $\opT_{1 2 3 4}^{-1} = \opS_{1 2 3}^{-1} \opSt_{2 3 4}^{-1}$ we get
\[
\EVp_{1 2} \opT_{1 2 3 4}^{-1} \del p \otimes \delbar p
= q^{(\alpha_s, \alpha_s) - 2 (\omega_s, \omega_s)} \EVp_{1 2} \opS_{1 2 3} \opSt_{2 3 4}^{-1} \del p \otimes \delbar p + (1 - q^{(\alpha_s, \alpha_s)}) q^{-(\omega_s, \omega_s)} \EVp_{1 2} \opSt_{2 3 4}^{-1} \del p \otimes \delbar p.
\]
We abbreviate this as $\EVp_{1 2} \opT_{1 2 3 4}^{-1} \del p \otimes \delbar p = q^{(\alpha_s, \alpha_s) - 2 (\omega_s, \omega_s)} A_1 + (1 - q^{(\alpha_s, \alpha_s)}) q^{-(\omega_s, \omega_s)} A_2$.

Consider the first term. Using \eqref{eq:S-evaluation} and the quadratic relation for $\opSt_{2 3 4}^{-1}$ we compute
\[
\begin{split}
A_1
& = \EVp_{1 2} \opS_{1 2 3} \opSt_{2 3 4}^{-1} \del p \otimes \delbar p = q^{(\omega_s, \omega_s + 2 \rho)} \EV_{2 3} \opSt_{2 3 4}^{-1} \del p \otimes \delbar p \\
& = q^{2 (\omega_s, \omega_s) - (\alpha_s, \alpha_s)} q^{(\omega_s, \omega_s + 2 \rho)} \EV_{2 3} \opSt_{2 3 4} \del p \otimes \delbar p + (1 - q^{-(\alpha_s, \alpha_s)}) q^{(\omega_s, \omega_s)} q^{(\omega_s, \omega_s + 2 \rho)} \EV_{2 3} \del p \otimes \delbar p \\
& = q^{2 (\omega_s, \omega_s) - (\alpha_s, \alpha_s)} \EVp_{3 4} \del p \otimes \delbar p + (1 - q^{-(\alpha_s, \alpha_s)}) q^{(\omega_s, \omega_s)} q^{(\omega_s, \omega_s + 2 \rho)} \EV_{2 3} \del p \otimes \delbar p \\
& = (1 - q^{-(\alpha_s, \alpha_s)}) q^{(\omega_s, \omega_s)} q^{(\omega_s, \omega_s + 2 \rho)} \EV_{2 3} \del p \otimes \delbar p.
\end{split}
\]
Now consider the second term. First we write
\[
A_2 = \EVp_{1 2} \opSt_{2 3 4}^{-1} \del p \otimes \delbar p
= \EVp_{1 2} \opSt_{2 3 4}^{-1} \EV_{2 3} \del p p \otimes \delbar p
= \EVp_{1 2} \EV_{2 3} \opSt_{4 5 6}^{-1} \del p \otimes p \delbar p.
\]
Then using the quadratic relation for $\opSt_{4 5 6}^{-1}$ and \eqref{eq:right-module-relations} we compute
\[
\begin{split}
A_2
& = q^{2 (\omega_s, \omega_s) - (\alpha_s, \alpha_s)} \EVp_{1 2} \EV_{2 3} \opSt_{4 5 6} \del p \otimes p \delbar p + (1 - q^{-(\alpha_s, \alpha_s)}) q^{(\omega_s, \omega_s)} \EVp_{1 2} \EV_{2 3} \del p \otimes p \delbar p \\
& = q^{(\omega_s, \omega_s)} \EVp_{1 2} \EV_{2 3} \del p \otimes \delbar p p
= q^{(\omega_s, \omega_s)} \metPM p.
\end{split}
\]

Using the two relations derived above we get
\[
\begin{split}
\EVp_{1 2} \opT_{1 2 3 4}^{-1} \del p \otimes \delbar p
& = q^{(\alpha_s, \alpha_s) - 2 (\omega_s, \omega_s)} A_1 + (1 - q^{(\alpha_s, \alpha_s)}) q^{-(\omega_s, \omega_s)} A_2 \\
& = - (1 - q^{(\alpha_s, \alpha_s)}) q^{(\omega_s, 2 \rho)} \EV_{2 3} \del p \otimes \delbar p + (1 - q^{(\alpha_s, \alpha_s)}) \metPM p.
\end{split}
\]

\textit{Second identity.}
Using the quadratic relation for $\opSt_{2 3 4}$ in $\opT_{1 2 3 4} = \opSt_{2 3 4} \opS_{1 2 3}$ we get
\[
\EVp_{3 4} \opT_{1 2 3 4} \delbar p \otimes \del p
= q^{(\alpha_s, \alpha_s) - 2 (\omega_s, \omega_s)} \EVp_{3 4} \opSt_{2 3 4}^{-1} \opS_{1 2 3} \delbar p \otimes \del p + (1 - q^{(\alpha_s, \alpha_s)}) q^{-(\omega_s, \omega_s)} \EVp_{3 4} \opS_{1 2 3} \delbar p \otimes \del p.
\]
We abbreviate this as $\EVp_{3 4} \opT_{1 2 3 4} \delbar p \otimes \del p = q^{(\alpha_s, \alpha_s) - 2 (\omega_s, \omega_s)} B_1 + (1 - q^{(\alpha_s, \alpha_s)}) q^{-(\omega_s, \omega_s)} B_2$.

Consider the first term. Using \eqref{eq:S-evaluation} and the quadratic relation for $\opS_{1 2 3}$ we compute
\[
\begin{split}
B_1
& = \EVp_{3 4} \opSt_{2 3 4}^{-1} \opS_{1 2 3} \delbar p \otimes \del p
= q^{(\omega_s, \omega_s + 2 \rho)} \EV_{2 3} \opS_{1 2 3} \delbar p \otimes \del p \\
& = q^{2 (\omega_s, \omega_s) - (\alpha_s, \alpha_s)} q^{(\omega_s, \omega_s + 2 \rho)} \EV_{2 3} \opS^{-1}_{1 2 3} \delbar p \otimes \del p
+ (1 - q^{-(\alpha_s, \alpha_s)}) q^{(\omega_s, \omega_s)} q^{(\omega_s, \omega_s + 2 \rho)} \EV_{2 3} \delbar p \otimes \del p \\
& = (1 - q^{-(\alpha_s, \alpha_s)}) q^{(\omega_s, \omega_s)} q^{(\omega_s, \omega_s + 2 \rho)} \EV_{2 3} \delbar p \otimes \del p.
\end{split}
\]
In \cite[Lemma C.5]{paper-projective} it is shown that $p \metMP = q^{(\omega_s, 2 \rho)} \EV_{2 3} \delbar p \otimes \del p$. Hence we get
\[
B_1 = (1 - q^{-(\alpha_s, \alpha_s)}) q^{2 (\omega_s, \omega_s)} p \metMP.
\]
On the other hand, we rewrite the second term using \eqref{eq:S-evaluation} as
\[
\begin{split}
B_2 & = \EVp_{3 4} \opS_{1 2 3} \delbar p \otimes \del p
= q^{(\omega_s, \omega_s + 2 \rho)} \EV_{2 3} \opSt_{2 3 4} \opS_{1 2 3} \delbar p \otimes \del p \\
& = q^{(\omega_s, \omega_s + 2 \rho)} \EV_{2 3} \opT_{1 2 3 4} \delbar p \otimes \del p.
\end{split}
\]

Using these expressions we obtain
\[
\begin{split}
\EVp_{3 4} \opT_{1 2 3 4} \delbar p \otimes \del p
& = q^{(\alpha_s, \alpha_s) - 2 (\omega_s, \omega_s)} B_1 + (1 - q^{(\alpha_s, \alpha_s)}) q^{-(\omega_s, \omega_s)} B_2 \\
& = - (1 - q^{(\alpha_s, \alpha_s)}) p \metMP + (1 - q^{(\alpha_s, \alpha_s)}) q^{(\omega_s, 2 \rho)} \EV_{2 3} \opT_{1 2 3 4} \delbar p \otimes \del p. \qedhere
\end{split}
\]
\end{proof}

\section{Relations in degree two}
\label{sec:relations-degree-two}

In this appendix we derive alternative forms for the mixed relations between the generators $\del p$ and $\delbar p$ in the Heckenberger-Kolb calculus $\calc^\bullet$.
We should point out that the expressions we derive here are valid for any irreducible flag manifold, as in the setting of \cite{heko}.

\begin{lemma}
\label{lem:degree-two-relations}
We have the relations
\[
\begin{split}
\del p \wedge \delbar p & = - q^{(\alpha_s, \alpha_s)} \opT_{1 2 3 4} \delbar p \wedge \del p + q^{(\alpha_s, \alpha_s)} q^{-(\omega_s, 2 \rho)} \EVp_{3 4} \opT_{1 2 3 4} \delbar p \wedge \del p p, \\
\delbar p \wedge \del p & = - q^{-(\alpha_s, \alpha_s)} \opT_{1 2 3 4} \del p \wedge \delbar p + q^{-(\alpha_s, \alpha_s)} q^{-(\omega_s, 2 \rho)} \EVp_{3 4} \opT_{1 2 3 4} \del p \wedge \delbar p p.
\end{split}
\]
We also have the relations
\[
\begin{split}
\del p \wedge \delbar p & = - q^{(\alpha_s, \alpha_s)} \opT_{1 2 3 4}^{-1} \delbar p \wedge \del p + q^{(\alpha_s, \alpha_s)} q^{-(\omega_s, 2 \rho)} \EVp_{3 4} \opT_{3 4 5 6}^{-1} p \delbar p \wedge \del p, \\
\delbar p \wedge \del p & = - q^{-(\alpha_s, \alpha_s)} \opT_{1 2 3 4}^{-1} \del p \wedge \delbar p + q^{-(\alpha_s, \alpha_s)} q^{-(\omega_s, 2 \rho)} \EVp_{3 4} \opT_{3 4 5 6}^{-1} p \del p \wedge \delbar p.
\end{split}
\]
\end{lemma}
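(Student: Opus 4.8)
The plan is to treat the single Heckenberger--Kolb mixed relation \eqref{eq:universal-mixed} --- which is exactly the last of the four displayed identities --- as the known input, and to derive the remaining three as purely algebraic reformulations of it. The four relations differ in only two respects: whether one has solved for $\del p \wedge \delbar p$ or for $\delbar p \wedge \del p$ (the two ``directions''), and whether the coefficient operator is written as $\opT_{1 2 3 4}$ or as $\opT_{1 2 3 4}^{-1}$ (the two ``operator forms''). It therefore suffices to explain (i) how to pass between the $\opT^{-1}$-form and the $\opT$-form for a fixed direction, and (ii) how to pass between the two directions; iterating these two conversions from \eqref{eq:universal-mixed} produces all four identities.

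For (i), I would rewrite $\opT_{1 2 3 4}^{-1} = \opS_{1 2 3}^{-1} \opSt_{2 3 4}^{-1}$ acting on $\del p \wedge \delbar p$ by means of the quadratic relations \eqref{eq:S-quadratic-relations} for $\opS$ and $\opSt$. Each such application replaces an inverse map by the corresponding forward map plus a scalar multiple of a lower-order term in which one of $\opS, \opSt$ has been deleted: the leading contribution reproduces $\opT_{1 2 3 4}$ with the required scalar, while the lower-order contributions are precisely of the shape handled by \cref{lem:evaluation-T} once the surviving evaluations are simplified using \eqref{eq:S-evaluation}. These are then folded into the displayed $\EVp_{3 4}$-correction, using the right-module relations \eqref{eq:right-module-relations} to transfer the factor $p$ from one side of the expression to the other --- this is exactly what accounts for the difference between the factor ``$p \, \del p \wedge \delbar p$'' in the $\opT^{-1}$-forms and ``$\del p \wedge \delbar p \, p$'' in the $\opT$-forms.

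For (ii), I would apply $\opT_{1 2 3 4}$ to both sides of a relation expressing $\delbar p \wedge \del p$ in terms of $\del p \wedge \delbar p$ and solve for the reversed product. Moving $\opT_{1 2 3 4}$ past the evaluation contractions is done using the commutation and braid relations of \cref{prop:S-properties}, after which the transformed correction term is again brought to the required form via \cref{lem:evaluation-T} and \eqref{eq:right-module-relations}. Combining (i) and (ii), the forward $\opT$-forms in the first group and the $\opT^{-1}$-forms in the second group both emerge from the same seed \eqref{eq:universal-mixed}.

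The main obstacle is the bookkeeping of the evaluation-correction terms, which are precisely the pieces that vanish in the classical limit. Almost all of the work lies in verifying that the several lower-order terms produced by the quadratic relations recombine, after the leg re-indexing (e.g.\ $\opT_{3 4 5 6}^{-1}$ versus $\opT_{1 2 3 4}$) and the left/right transfer of $p$, into the single $\EVp_{3 4}$-term occurring in each target identity, with every power of $q$ matching. The identities of \cref{lem:evaluation-T} are tailored exactly to this purpose, so the crux is to invoke them in the correct leg configuration and to keep careful track of the scalar prefactors throughout.
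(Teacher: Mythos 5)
Your strategy---taking the Heckenberger--Kolb relation \eqref{eq:universal-mixed} as the seed and interconverting the four identities by pure algebra---is genuinely different from the paper's proof, which never converts one relation into another. The paper instead applies $\delbar$ (resp.\ $\del$) to the right-module relations \eqref{eq:right-module-relations}, uses the second-order identity $\opT_{1 2 3 4}\, p\, \del \delbar p = \del \delbar p\, p$, and then eliminates the auxiliary term $\delbar \del p$ (resp.\ $\del \delbar p$) by applying $\EVp_{3 4}$ or $\EVp_{1 2}$ to the intermediate relation and substituting back; the mixed second derivative is the pivot from which all four identities and their $\EVp$-correction terms drop out of two short computations. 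Your plan has no analogue of this pivot, and that is where it runs into concrete trouble.

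Two of your steps do not go through with the tools you invoke. First, in step (i), expanding $\opT_{1 2 3 4}^{-1} = \opS_{1 2 3}^{-1} \opSt_{2 3 4}^{-1}$ by the quadratic relations \eqref{eq:S-quadratic-relations} gives (the scalar prefactors cancel)
\[
\opT_{1 2 3 4}^{-1} = \opT_{1 2 3 4} - (1 - q^{(\alpha_s, \alpha_s)}) q^{-(\omega_s, \omega_s)} \opS_{1 2 3} - (1 - q^{-(\alpha_s, \alpha_s)}) q^{(\omega_s, \omega_s)} \opSt_{2 3 4} + (1 - q^{-(\alpha_s, \alpha_s)})(1 - q^{(\alpha_s, \alpha_s)}),
\]
so the lower-order terms are a bare $\opS_{1 2 3}$, a bare $\opSt_{2 3 4}$ and a scalar, each acting on $\del p \wedge \delbar p$: they contain no evaluation maps at all, hence there are no ``surviving evaluations'' to simplify by \eqref{eq:S-evaluation}, and they are \emph{not} of the shape handled by \cref{lem:evaluation-T}, which treats the contractions $\EVp_{1 2} \opT_{1 2 3 4}^{-1}$ and $\EVp_{3 4} \opT_{1 2 3 4}$ on mixed generators. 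Unlike the same-type cases \eqref{eq:tensor-algebra-identities}, a single $\opS$ or $\opSt$ acting on the mixed element $\del p \wedge \delbar p$ is not reduced by any one identity in the paper; taming it requires a further round of manipulations with \eqref{eq:evaluations-calculi} and \eqref{eq:right-module-relations} that your plan does not describe. Second, step (ii) requires moving $\opT_{1 2 3 4}$ past the correction term $\EVp_{3 4} \opT_{3 4 5 6}^{-1}\, p\, \del p \wedge \delbar p$; after the contraction the legs are relabelled, so what is needed is an exchange identity between $\opT^{\pm 1}$ and the evaluations, of the type $\EV_{2 3} \opT_{3 4 5 6} \opT_{1 2 3 4} = \opT_{1 2 3 4} \EV_{4 5}$ used in the proof of \cref{thm:R-bimodule}, and not merely the commutation and braid relations of \cref{prop:S-properties}, which involve no evaluations. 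The four relations are indeed mutually equivalent, so your route is not impossible in principle; but the recombination you defer to ``bookkeeping'' is exactly the mathematical content of the lemma, and the mechanism you propose for carrying it out is mismatched to the terms that actually arise.
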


\begin{proof}
Consider the relation $\del p p = q^{(\alpha_s, \alpha_s)} \opT_{1 2 3 4} p \del p$ from \eqref{eq:right-module-relations}. Applying $\delbar$ we get
\[
\delbar \del p p - \del p \wedge \delbar p = q^{(\alpha_s, \alpha_s)} \opT_{1 2 3 4} \delbar p \wedge \del p + q^{(\alpha_s, \alpha_s)} \opT_{1 2 3 4} p \delbar \del p.
\]
We have the identity $\opT_{1 2 3 4} p \del \delbar p = \del \delbar p p$, as shown for instance in \cite[Lemma C.7]{paper-projective}.
Then the relation above can be rewritten as
\[
\del p \wedge \delbar p = - q^{(\alpha_s, \alpha_s)} \opT_{1 2 3 4} \delbar p \wedge \del p + (1 - q^{(\alpha_s, \alpha_s)}) \delbar \del p p.
\]
Applying $\EVp_{3 4}$ to this we get $(1 - q^{(\alpha_s, \alpha_s)}) q^{(\omega_s, 2 \rho)} \delbar \del p = q^{(\alpha_s, \alpha_s)} \EVp_{3 4} \opT_{1 2 3 4} \delbar p \wedge \del p$, since $\EVp_{1 2} \delbar p = 0$ and $\EVp_{1 2} p = q^{(\omega_s, 2 \rho)}$.
Plugging this back into $\del p \wedge \delbar p$ gives
\[
\del p \wedge \delbar p = - q^{(\alpha_s, \alpha_s)} \opT_{1 2 3 4} \delbar p \wedge \del p + q^{(\alpha_s, \alpha_s)} q^{-(\omega_s, 2 \rho)} \EVp_{3 4} \opT_{1 2 3 4} \delbar p \wedge \del p p.
\]

Similarly, the relation above (before applying $\EVp_{3 4}$) can be rewritten as
\[
\delbar p \wedge \del p = - q^{-(\alpha_s, \alpha_s)} \opT_{1 2 3 4}^{-1} \del p \wedge \delbar p - (1 - q^{-(\alpha_s, \alpha_s)}) \opT_{1 2 3 4}^{-1} \delbar \del p p.
\]
Using again $\opT_{1 2 3 4} p \del \delbar p = \del \delbar p p$ gives
\[
\delbar p \wedge \del p = - q^{-(\alpha_s, \alpha_s)} \opT_{1 2 3 4}^{-1} \del p \wedge \delbar p - (1 - q^{-(\alpha_s, \alpha_s)}) p \delbar \del p.
\]
Now applying $\EVp_{1 2}$ we get $- (1 - q^{-(\alpha_s, \alpha_s)}) q^{(\omega_s, 2 \rho)} \delbar \del p = q^{-(\alpha_s, \alpha_s)} \EVp_{1 2} \opT_{1 2 3 4}^{-1} \del p \wedge \delbar p$. Hence
\[
\delbar p \wedge \del p = - q^{-(\alpha_s, \alpha_s)} \opT_{1 2 3 4}^{-1} \del p \wedge \delbar p + q^{-(\alpha_s, \alpha_s)} q^{-(\omega_s, 2 \rho)} \EVp_{3 4} \opT_{3 4 5 6}^{-1} p \del p \wedge \delbar p.
\]

The other two relations can be obtained in a similar way starting from $\delbar p p = q^{-(\alpha_s, \alpha_s)} \opT_{1 2 3 4} p \delbar p$, again from \eqref{eq:right-module-relations}. Applying $\del$ we get
\[
\del \delbar p p - \delbar p \wedge \del p = q^{-(\alpha_s, \alpha_s)} \opT_{1 2 3 4} \del p \wedge \delbar p + q^{-(\alpha_s, \alpha_s)} \opT_{1 2 3 4} p \del \delbar p.
\]
Taking into account $\opT_{1 2 3 4} p \del \delbar p = \del \delbar p p$ we rewrite this as
\[
\delbar p \wedge \del p = - q^{-(\alpha_s, \alpha_s)} \opT_{1 2 3 4} \del p \wedge \delbar p + (1 - q^{-(\alpha_s, \alpha_s)}) \del \delbar p p.
\]
Applying $\EVp_{3 4}$ we get $(1 - q^{-(\alpha_s, \alpha_s)}) q^{(\omega_s, 2 \rho)} \del \delbar p = q^{-(\alpha_s, \alpha_s)} \EVp_{3 4} \opT_{1 2 3 4} \del p \wedge \delbar p$. Then
\[
\delbar p \wedge \del p = - q^{-(\alpha_s, \alpha_s)} \opT_{1 2 3 4} \del p \wedge \delbar p + q^{-(\alpha_s, \alpha_s)} q^{-(\omega_s, 2 \rho)} \EVp_{3 4} \opT_{1 2 3 4} \del p \wedge \delbar p p.
\]

Similarly we can rewrite the expression above as
\[
\del p \wedge \delbar p = - q^{(\alpha_s, \alpha_s)} \opT_{1 2 3 4}^{-1} \delbar p \wedge \del p - (1 - q^{(\alpha_s, \alpha_s)}) p \del \delbar p.
\]
Applying $\EVp_{1 2}$ we get $- (1 - q^{(\alpha_s, \alpha_s)}) q^{(\omega_s, 2 \rho)} \del \delbar p = q^{(\alpha_s, \alpha_s)} \EVp_{1 2} \opT_{1 2 3 4}^{-1} \delbar p \wedge \del p$. Then
\[
\del p \wedge \delbar p = - q^{(\alpha_s, \alpha_s)} \opT_{1 2 3 4}^{-1} \delbar p \wedge \del p + q^{(\alpha_s, \alpha_s)} q^{-(\omega_s, 2 \rho)} \EVp_{3 4} \opT_{3 4 5 6}^{-1} p \delbar p \wedge \del p. \qedhere
\]
\end{proof}

\begin{remark}
The relation derived in \cite[Proposition 3.11]{heko} and recalled in \eqref{eq:universal-mixed} corresponds to the fourth one of the previous lemma.
Clearly any of these four expressions can be taken as the defining relation for the mixed terms in degree two.
\end{remark}


\begin{thebibliography}{ABCD00}

\providecommand{\DOI}[1]{\href{https://doi.org/#1}{DOI}}
\providecommand{\arxiv}[1]{\href{http://arxiv.org/abs/#1}{arXiv}}

\bibitem[BeMa11]{bm11}
E.J. Beggs, S. Majid,
\textit{*-compatible connections in noncommutative Riemannian geometry},
Journal of Geometry and Physics 61, no. 1 (2011): 95-124.
\DOI{10.1016/j.geomphys.2010.09.002}, \arxiv{0904.0539}.

\bibitem[BeMa18]{bm18}
E.J. Beggs, S. Majid,
\textit{Quantum Bianchi identities via DG categories},
Journal of Geometry and Physics 124 (2018): 350-370.
\DOI{10.1016/j.geomphys.2017.11.005}, \arxiv{1705.08335}.

\bibitem[BeMa20]{quantum-book}
E.J. Beggs, S. Majid,
\textit{Quantum Riemannian Geometry},
Grundlehren der mathematischen Wissenschaften, Vol. 355, Springer, 2020.
\DOI{10.1007/978-3-030-30294-8}.

\bibitem[BGL20]{koszul-formula}
J. Bhowmick, D. Goswami, G. Landi,
\textit{On the Koszul formula in noncommutative geometry},
Reviews in Mathematical Physics 32, no. 10 (2020): 2050032.
\DOI{10.1142/S0129055X20500324}, \arxiv{1910.09306}.

\bibitem[Con94]{connes}
A. Connes,
\textit{Noncommutative geometry},
Academic Press, San Diego, 1994.

\bibitem[DADą10]{dirac-projective}
F. D'Andrea, L. Dąbrowski,
\textit{Dirac operators on quantum projective spaces},
Communications in Mathematical Physics 295, no. 3 (2010): 731-790.
\DOI{10.1007/s00220-010-0989-8}, \arxiv{0901.4735}.

\bibitem[DADL08]{dirac-plane}
F. D'Andrea, L. Dąbrowski, G. Landi,
\textit{The Noncommutative Geometry of the Quantum Projective Plane},
Reviews in Mathematical Physics 20, no. 08 (2008): 979-1006.
\DOI{10.1142/S0129055X08003493}, \arxiv{0712.3401}.

\bibitem[DąSi03]{dirac-sphere}
L. Dąbrowski, A. Sitarz,
\textit{Dirac operator on the standard Podleś quantum sphere},
Banach Center Publications 61 (2003): 49-58.
\DOI{10.4064/bc61-0-4}, \arxiv{math/0209048}.

\bibitem[DKÓ+20]{connections-irreducible}
F. Díaz García, A. Krutov, R. Ó Buachalla, P. Somberg, K.R. Strung,
\textit{Holomorphic Relative Hopf Modules over the Irreducible Quantum Flag Manifolds},
Letters in Mathematical Physics 111 (2021).
\DOI{10.1007/s11005-020-01340-7}, \arxiv{2005.09652}.

\bibitem[EGNO16]{egno}
P. Etingof, S. Gelaki, D. Nikshych, V. Ostrik,
\textit{Tensor categories},
Mathematical Surveys and Monographs, Vol. 205. American Mathematical Society, 2016.

\bibitem[HeKo04]{locally-finite}
I. Heckenberger, S. Kolb,
\textit{The locally finite part of the dual coalgebra of quantized irreducible flag manifolds},
Proceedings of the London Mathematical Society 89, no. 2 (2004): 457-484.
\DOI{10.1112/S0024611504014777}, \arxiv{math/0301244}.

\bibitem[HeKo06]{heko}
I. Heckenberger, S. Kolb,
\textit{De Rham complex for quantized irreducible flag manifolds},
Journal of Algebra 305, no. 2 (2006): 704-741.
\DOI{10.1016/j.jalgebra.2006.02.001}, \arxiv{math/0307402}.

\bibitem[KlSc97]{klsc}
A.U. Klimyk, K. Schmüdgen,
\textit{Quantum groups and their representations},
Theoretical and Mathematical Physics, Springer, 1997.
\DOI{10.1007/978-3-642-60896-4}.

\bibitem[Maj05]{majid-sphere}
S. Majid,
\textit{Noncommutative Riemannian and spin geometry of the standard q-sphere},
Communications in Mathematical Physics 256, no. 2 (2005): 255-285.
\DOI{10.1007/s00220-005-1295-8}, \arxiv{math/0307351}.

\bibitem[Mat19]{kahler}
M. Matassa,
\textit{Kähler structures on quantum irreducible flag manifolds},
Journal of Geometry and Physics 145 (2019): 103477.
\DOI{10.1016/j.geomphys.2019.07.008}, \arxiv{1901.09544}.

\bibitem[Mat20]{paper-projective}
M. Matassa,
\textit{Fubini-Study metrics and Levi-Civita connections on quantum projective spaces},
Advances in Mathematics 393 (2021): 108101.
\DOI{10.1016/j.aim.2021.108101}, \arxiv{2010.03291}.

\bibitem[ÓBu12]{projective-bundle}
R. Ó Buachalla,
\textit{Quantum bundle description of quantum projective spaces},
Communications in Mathematical Physics 316, no. 2 (2012): 345-373.
\DOI{10.1007/s00220-012-1577-x}, \arxiv{1105.1768}.

\bibitem[Pet06]{petersen}
P. Petersen,
\textit{Riemannian geometry},
Graduate Texts in Mathematics, Vol. 171, Springer, 2006.
\DOI{10.1007/978-0-387-29403-2}.

\end{thebibliography}
\end{document}